\documentclass[pdflatex,sn-mathphys-num]{sn-jnl}

\usepackage{graphicx}%
\usepackage{multirow}%
\usepackage{amsmath,amssymb,amsfonts}%
\usepackage{amsthm}%
\usepackage{mathrsfs}%
\usepackage[title]{appendix}%
\usepackage{xcolor}%
\usepackage{textcomp}%
\usepackage{manyfoot}%
\usepackage{booktabs}%
\usepackage{algorithm}%
\usepackage{algorithmicx}%
\usepackage{algpseudocode}%
\usepackage{listings}%

\theoremstyle{thmstyleone}%
\newtheorem{theorem}{Theorem}[section]
\newtheorem{lemma}[theorem]{Lemma}

\newtheorem{corollary}[theorem]{Corollary}

\usepackage{url}
\theoremstyle{thmstyletwo}%

\theoremstyle{thmstylethree}%
\newtheorem{definition}{Definition}%

\begin{document}

\title[RKHS on Banach Completions of VPD Groups]{Reproducing Kernel Hilbert Spaces on Banach Completions of Virtual Persistence Diagram Groups}


\author*{\fnm{Charles} \sur{Fanning}}\email{cfannin8@students.kennesaw.edu}

\author{\fnm{Mehmet} \sur{Aktas}}\email{maktas1@kennesaw.edu}

\affil*{\orgdiv{School of Data Science and Analytics}, 
\orgname{Kennesaw State University}, 
\orgaddress{\street{1000 Chastain Rd NW}, 
\city{Kennesaw}, 
\postcode{30144}, 
\state{Georgia}, 
\country{United States}}}


\abstract{Persistent homology maps a simplicial complex filtered by elements in $\mathbb R$ to finite formal sums of elements of $\mathbb R_{\leq}^{2} = \{ (b,d) \in \mathbb R^2 \cup \{ \infty \} \mid b < d \}$ called (finite) persistence diagrams. This map is stable with respect to the $p$--Wasserstein distance for all $p \in \left[1, + \infty \right]$. Bubenik and Elchesen extend the free translation-invariant commutative Lipschitz monoid of finite persistence diagrams $D(X,A) = D(X)/D(A)$ on arbitrary metric pairs $(X,d,A)$ with $A \subset X$ onto the free translation-invariant abelian Lipschitz group of virtual persistence diagrams $K(X,A) = K(X)/K(A)$ as an isometric embedding $D(X,A) \hookrightarrow K(X,A)$ via the Grothendieck group completion. They prove that the $p$-Wasserstein distance is translation invariant on $D(X,A)$ if and only if $p=1$ and define the unique translation-invariant embedding of $W_1[d]$ into $K(X,A)$ as $\rho.$ When $K(X,A)$ is locally compact abelian, translation-invariant kernels can be constructed via positive-definite functions and Bochner's theorem on the Pontryagin dual. We prove that, for the metric topology induced by $\rho$, the group $(K(X,A),\rho)$ is locally compact if and only if it is discrete, equivalently when the pointed metric space $(X/A,d_1,[A])$ is uniformly discrete, and hence this approach fails outside that case. Assuming instead that $(X/A,d_1,[A])$ is separable and not uniformly discrete, we develop a translation-invariant kernel theory for non--locally compact virtual persistence diagram groups. The group $K(X,A)$ embeds isometrically into its canonical Banach-space linearization $B=\widehat V(X,A)\cong\mathcal F(X/A,d_1)$, and each bounded symmetric positive operator $Q\colon B\to B^\ast$ determines a translation-invariant Gaussian kernel $k(x,y)=\exp\!\left(-\tfrac12\,\langle Q(x-y),x-y\rangle_{B,B^\ast}\right).$}

\keywords{persistent homology, topological data analysis, positive definite kernels, reproducing kernel Hilbert spaces, Gaussian measures, Banach spaces, random Fourier features}


\pacs[MSC Classification]{46E22, 22A05, 55N31}

\maketitle

\section{Introduction}

Persistent homology assigns to a filtered simplicial complex a persistence diagram: a finite multiset of points in $\mathbb R^2$ encoding the birth and death parameters of homological features. Under the usual tameness assumptions ensuring finitely many homological critical values, the resulting diagram is Lipschitz-stable on the input. More precisely, the map from filtrations (or functions) equipped with the $\|\cdot\|_\infty$ metric to persistence diagrams equipped with the bottleneck distance is $1$--Lipschitz, and, under appropriate summability conditions on lifetimes, is also Lipschitz with respect to $p$--Wasserstein distances \cite{892133,CohenSteiner2007StabilityPD,Oudot2015PersistenceT}.

Kernel and embedding methods for persistence diagrams are by now well developed. Prominent examples include \emph{persistence images}, which map diagrams to finite-dimensional $L^2$-type representations \cite{adams2017persistenceimages}, the \emph{persistence scale-space kernel} \cite{reininghaus2014stablemultiscale}, the \emph{persistence weighted Gaussian kernel} \cite{kusano2016pwgk}, the \emph{Sliced Wasserstein kernel} \cite{carriere2017slicedwasserstein}, and the \emph{persistence Fisher kernel} \cite{le2018persistencefisher}. However, they do not operate on the intrinsic algebraic structure of the space of persistence diagrams itself. That space is a free commutative, cancellative monoid generated by the birth--death space, with monoid operation given by addition of multiplicities and equipped with bottleneck or Wasserstein matching metrics. Since ordinary persistence diagrams do not form an LCA group, these constructions do not have access to the canonical functorial identification as in the case of Pontryagin duality and Bochner's theorem. 

Virtual persistence diagrams, introduced by Bubenik and Elchesen~\cite{bubenik2022virtualpd}, are obtained by adjoining additive inverses to finite persistence diagrams in a manner compatible with matching metrics. Given a metric pair $(X,d,A)$, let $D(X,A)$ denote the commutative monoid of finite diagrams relative to $A$, and let $K(X,A)$ denote its Grothendieck group. For the Wasserstein--$1$ distance, \cite{bubenik2022virtualpd} defines a translation-invariant metric $\rho$ on $K(X,A)$ extending the Wasserstein--$1$ metric on $D(X,A)$. More generally, for $p\in[1,\infty]$, the $p$-Wasserstein distance on $D(X,A)$ is translation invariant if and only if the associated strengthened metric $d_p$ is a $p$-metric~\cite{bubenik2022virtualpd}. In particular, $p=1$ is the unique exponent for which translation invariance holds for every metric pair. Since translation invariance is required to extend a matching metric from diagrams to the Grothendieck group, this identifies the Wasserstein--$1$ metric as the canonical choice for virtual persistence diagram groups.

Beyond the group level, the Grothendieck construction admits a canonical linearization. The group $(K(X,A),\rho)$ embeds isometrically into a normed vector space $(V(X,A),\|\cdot\|_{W_1})$, and its Banach completion $B:=\widehat V(X,A)$ provides a canonical Banach-space model of the $W_1$ geometry of virtual persistence diagrams \cite{bubenik2022virtualpd}. Under the standing assumptions fixed in Section~\ref{sec:background} (in particular, that the quotient by $A$ equipped with the $1$-strengthened metric yields a separated pointed metric space), one has a canonical isometric isomorphism
\[
B \;\cong\; \mathcal F(X/A,d_1),
\]
the Lipschitz-free (Arens--Eells) Banach space over the pointed metric space $(X/A,d_1,[A])$ \cite{bubenik2022virtualpd,weaver2018lipschitz}. 

In the uniformly discrete case, the metric group $(K(X,A),\rho)$ is discrete and locally compact, and the duality between $K(X,A)$ and its Pontryagin dual supports a correspondence in which analytic constructions on $K(X,A)$ can be recovered from their dual representations, as recalled in Section~\ref{subsec:background-lc-rkhs} \cite{Folland2015CourseAH,fanning2025reproducingkernelhilbertspaces}. In Section~\ref{sec:classification}, we show that this is the only situation in which such a correspondence is available for the metric topology induced by $\rho$: $(K(X,A),\rho)$ is locally compact if and only if it is discrete, and this holds exactly when the pointed metric space $(X/A,d_1,[A])$ is uniformly discrete. The argument combines general properties of the Grothendieck metric with rigidity results for locally compact group topologies on free abelian groups, as stated, for example, in~\cite{morris1977locallycompact}.

Outside the uniformly discrete case, the Grothendieck metric group of virtual persistence diagrams fails to be locally compact with respect to the metric topology induced by $\rho$, and no translation-invariant measure or compatible dual group structure supports the construction of reproducing kernel Hilbert spaces. This paper proceeds through the canonical Banach-space linearization of virtual persistence diagrams and defines translation-invariant Gaussian kernels directly on that space. These kernels depend on continuous Hilbertian seminorms on the Banach model, equivalently on bounded symmetric positive operators $Q\colon B\to B^\ast$, and admit the explicit form $k(x,y)=\exp\!\left(-\tfrac12\,s(x-y)^2\right), $ where $s(x)^2=\langle Qx,x\rangle_{B,B^\ast}.$

\subsection{Our contributions}

Throughout, we work under the standing hypotheses~\emph{(H1)}--\emph{(H2)} in Definition~\ref{def:standing-hypotheses}, so that $(K(X,A),\rho)$ is non-discrete and the Banach completion $B=\widehat V(X,A)\cong\mathcal F(X/A,d_1)$ is separable and infinite-dimensional (Corollary~\ref{cor:input-output-discrete}, Lemmas~\ref{lem:infinite-rank} and~\ref{lem:B-separable}).

The first contribution is a family of \emph{explicit metric bounds} for reproducing kernel Hilbert spaces (RKHS) associated with translation-invariant Gaussian kernels on $B$. Given admissible covariance data $(J,\Sigma,t)$ as in Section~\ref{subsec:main-rkhs}, Theorem~\ref{thm:lipschitz-main-sigma} controls the $\rho$--Lipschitz constants of all functions in the RKHS $H_{J,\Sigma,t}$ restricted to $K(X,A)$ by a single scalar factor derived from $\sum_{n\ge1}\sigma_n w_n^2$. Theorem~\ref{thm:entropy-feature} bounds covering numbers of subsets of $K(X,A)$ with respect to the feature metric $\delta_{J,\Sigma,t}$ by covering numbers with respect to $\rho$. Together, these results quantify how the covariance operator $Q_{J,\Sigma}=J^\ast\Sigma J\colon B\to B^\ast$ restricts the regularity and metric complexity of kernel-based constructions on virtual persistence diagrams.

The second contribution uses the Banach model to extract information in the reverse direction. Although translation-invariant Gaussian kernels on $B$ are noninjective on $K(X,A)$, Theorem~\ref{thm:finite-max-certificate-slack} shows that a single kernel value $k_{J,\Sigma,t}(g,0)$, together with finite support information for $g$, provides an explicit upper bound on the diagrammatic mass $\mathcal M(g)$, a metric invariant defined directly from $(X/A,d_1)$. Theorem~\ref{thm:seminorm-bilipschitz-iff} then compares different choices of covariance operators: it gives an exact Rayleigh-quotient criterion for when two operators $Q_1,Q_2\colon B\to B^\ast$ induce Hilbertian seminorms that are bi-Lipschitz equivalent on $B$, and hence when the corresponding Gaussian kernels induce uniformly equivalent feature geometries on $K(X,A)$.

The third contribution concerns computable approximations. Section~\ref{subsec:main-rff} implements random Fourier feature maps $\Phi_R\colon B\to\mathbb C^R$ based on the Gaussian measure $\gamma_\Sigma$ on $\ell^2$. Lemma~\ref{lem:rff-uniform} gives a uniform concentration bound for the empirical kernel $\widehat k_R$ on finite subsets of $B$, while Corollary~\ref{cor:rff-lipschitz} controls the Lipschitz constant of $\Phi_R$ with respect to $\|\cdot\|_{W_1}$. Corollary~\ref{cor:rff-mass} transfers the mass bound from Theorem~\ref{thm:finite-max-certificate-slack} to the empirical kernel $\widehat k_R$, and Corollary~\ref{cor:rff-entropy} transfers the covering number comparison from $K(X,A)$ to its image in the finite-dimensional feature space. These results convert the Banach--Gaussian construction into a finite-sample scheme with explicit probabilistic error control.

Finally, Section~5 applies this analytic pipeline to virtual persistence diagrams obtained from lower--star clique filtrations indexed by partially ordered metric label spaces. Edge labelings into $\mathbb R$, $\mathbb R^3$, $C([0,1])$, and $\mathcal S^+_3$ produce birth--death data in product spaces of the form $(P^2,d_P\oplus d_P,A)$ and hence virtual diagrams in a common Grothendieck group $(K(X,A),\rho)$ with Banach completion $B=\widehat V(X,A)$ (Figures~\ref{fig:rff-network-labels} and~\ref{fig:rff-network-vpd}). The bounds from Section~\ref{sec:main} then isolate the contribution of the label-space geometry to Lipschitz constants, random-feature concentration, and robustness of kernel evaluations under perturbations of persistence data.\footnote{\url{https://github.com/cfanning8/Reproducing_Kernel_Hilbert_Spaces_for_Non_Discrete_Virtual_Persistence_Diagrams}}

\subsection{Related work}

Foundational results on persistent homology, barcodes/persistence diagrams, and stability were established in \cite{892133,Zomorodian2005ComputingPH,CohenSteiner2007StabilityPD,Oudot2015PersistenceT}, with subsequent structural developments including stability and structure theorems for persistence modules \cite{chazal2013structurestabilitypersistencemodules}. Extensions such as extended persistence \cite{CohenSteiner2009ExtendingPU}, zigzag persistence \cite{carlsson2008zigzagpersistence}, and vineyards \cite{CohenSteiner2006VinesAV} broaden the classical one-parameter setting, while multiparameter persistence replaces complete discrete invariants by interleaving-based analysis \cite{Lesnick2015InterleavingMP,chazal2013structurestabilitypersistencemodules}. Virtual persistence diagrams and their Grothendieck/Banach-space formalism are developed in \cite{bubenik2022virtualpd}. RKHS constructions based on multipliers on the Pontryagin dual in the uniformly discrete (locally compact) regime are developed in \cite{fanning2025reproducingkernelhilbertspaces}.

\subsection{Organization of the paper}

\begin{itemize}
\item Section~\ref{sec:background} introduces virtual persistence diagrams, the $1$--Wasserstein Grothendieck metric, and the Banach-space model.

\item Section~\ref{sec:classification} characterizes local compactness of $(K(X,A),\rho)$ and its equivalence with uniform discreteness of $(X/A,d_1,[A])$.

\item Section~\ref{sec:main} constructs translation-invariant Gaussian kernels on the Banach completion, together with Lipschitz, mass, entropy, and random feature bounds.

\item Section~\ref{sec:example} demonstrates the full pipeline on graph-based filtrations with non--uniformly discrete label spaces.
\end{itemize}

\section{Background and Notation}
\label{sec:background}

Throughout this paper, $(X,d,A)$ denotes a metric pair, consisting of a metric space $(X,d)$ together with a distinguished subset $A \subseteq X$. We factor through the associated pointed metric space obtained by collapsing $A$ to a single point, write $X/A$ for the resulting quotient space equipped with the induced metric and basepoint $[A]$, and freely pass between the pair $(X,d,A)$ and the pointed space $(X/A,d)$.

\subsection{Virtual persistence diagrams}
\label{subsec:background-vpd}

Let $(X,d,A)$ be a metric pair. For each $x \in X$, define \begin{equation}\label{eq:dXA} d(x,A) := \inf_{a \in A} d(x,a). \end{equation} The \emph{$1$-strengthened metric} on $X$ is defined by \begin{equation}\label{eq:d1}
d_1(x,y) := \min\bigl(d(x,y),\, d(x,A) + d(y,A)\bigr), \qquad x,y \in X.
\end{equation}
Let $X/A$ denote the quotient space obtained by collapsing $A$ to a single point, denoted $[A]$. The function $d_1$ descends to a metric on $X/A$, which we again denote by $d_1$.

\begin{definition}\label{def:finite-diagram}
Let $D(X)$ denote the free commutative monoid on $X$. The quotient
\[
D(X,A) := D(X)/D(A)
\]
is called the space of \emph{finite persistence diagrams} associated to the metric pair $(X,d,A)$. The zero element is denoted by $0$.
\end{definition}

For $\alpha,\beta \in D(X,A)$, a matching between $\alpha$ and $\beta$ is an element of $D(X \times X)$ whose marginals agree with $\alpha$ and $\beta$ modulo $D(A)$. The $1$-Wasserstein distance is defined by
\begin{equation}\label{eq:W1}
W_1(\alpha,\beta) := \inf_{\sigma} \sum_{(x,y)\in\sigma} d_1(x,y),
\end{equation}
where the infimum is taken over all such matchings.

The choice of $p=1$ is not arbitrary. For $p \in [1,\infty]$, define the $p$-strengthened metric
\begin{equation}\label{eq:dp}
d_p(x,y) := \min\bigl(d(x,y),\, (d(x,A),d(y,A))_p\bigr).
\end{equation}
A metric $d$ is called a $p$-metric if
\[
d(x,y) \le (d(x,z),d(z,y))_p \quad\text{for all } x,y,z \in X.
\]

\begin{theorem}\label{thm:p-canonical} Let $(X,d,A)$ be a metric pair and let $p \in [1,\infty]$. The $p$-Wasserstein distance $W_p[d]$ on $D(X,A)$ is translation invariant if and only if the strengthened metric $d_p$ is a $p$-metric. \end{theorem}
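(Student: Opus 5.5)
We prove the two implications separately. Throughout we use that $W_p[d]$ on $D(X,A)$ can be computed with the ground cost $d_p$: diagonal points are matched to $A$ at cost $d(x,A)$, and a pair $(x,y)$ may be matched directly or both sent to $A$. Hence, after optimizing, the cost of matching a single point $x$ to a single point $y$ is exactly $d_p(x,y)$. Translation invariance means $W_p(\alpha+\gamma,\beta+\gamma)=W_p(\alpha,\beta)$ for all $\alpha,\beta,\gamma\in D(X,A)$. The inequality $\le$ always holds: extend an optimal matching of $\alpha,\beta$ by the identity on $\gamma$, which costs $0$. So the content of the theorem is the reverse inequality $\ge$.

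For necessity, suppose $W_p$ is translation invariant. Fix $x,y,z\in X$ and take $\alpha=x$, $\beta=y$, $\gamma=z$. Then $W_p(x+z,y+z)=W_p(x,y)=d_p(x,y)$. Now bound $W_p(x+z,y+z)$ from above using the "crossed" matching $x\mapsto z$, $z\mapsto y$. Its cost is $(d_p(x,z),d_p(z,y))_p$. This gives $d_p(x,y)\le (d_p(x,z),d_p(z,y))_p$, which is exactly the $p$-metric inequality. The case where some of $x,y,z$ lie in $A$ is trivial because $d_p$ vanishes on $A$ after the quotient. The case $p=\infty$ reads with $\max$ and is handled identically.

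For sufficiency, assume $d_p$ is a $p$-metric and fix an optimal, or $\varepsilon$-optimal, matching $\sigma$ between $\alpha+\gamma$ and $\beta+\gamma$. Since everything is a finite formal sum, we may assume $\gamma$ is a single point $z$ and induct on the number of points in $\gamma$. In $\sigma$, the copy of $z$ on the left is matched to some point $u$, and the copy of $z$ on the right is matched from some point $v$; here $u,v$ may be the diagonal $A$. If these are the same pairing $z\leftrightarrow z$, delete it and obtain a matching of $\alpha$ with $\beta$ of the same cost. Otherwise $\sigma$ contains the pairs $(v,z)$ and $(z,u)$. Replace them by the single pair $(v,u)$. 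The $p$-metric inequality gives $d_p(v,u)^p\le d_p(v,z)^p+d_p(z,u)^p$, so the total $p$-cost does not increase. The result is a matching of $\alpha$ with $\beta$, so $W_p(\alpha,\beta)\le W_p(\alpha+z,\beta+z)$.

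The main obstacle is bookkeeping in the sufficiency step when $u$ or $v$ is the diagonal, i.e.\ $z$ is matched to $A$. In that case the replacement pair is $(v,A)$ or $(A,u)$, with cost $d(v,A)$ or $d(u,A)$. We then need $d(v,A)\le (d_p(v,z),d(z,A))_p$, which should follow from the $p$-metric inequality applied with a point of $A$, using that $d_p(\cdot,a)=d(\cdot,A)$ for $a\in A$. A second subtlety is that the infimum in $W_p$ need not be attained. To handle this, we work with $\varepsilon$-optimal matchings and let $\varepsilon\to0$, and we treat $p=\infty$ by replacing sums with maxima.
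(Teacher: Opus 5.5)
The paper does not prove this statement. It is quoted from Bubenik and Elchesen \cite{bubenik2022virtualpd}, so there is no in-text argument to compare against. Your proof is a correct direct argument: pass to ground cost $d_p$, identify $W_p$ on singletons with $d_p$, use a crossed matching for necessity, and use the surgery $(v,z),(z,u)\mapsto(v,u)$ for sufficiency. It contains one false remark that you should remove.

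\textbf{The false remark.} In the necessity direction you say the case where some of $x,y,z$ lies in $A$ is trivial. That is true when $z\in A$, since the right-hand side is then $(d(x,A),d(y,A))_p\ge d_p(x,y)$ by definition. It is also true when $x,y\in A$. It is false when one endpoint lies in $A$ and the middle point does not. Take $X=\{a,z,y\}$ with $A=\{a\}$, $d(a,z)=d(z,y)=1$, $d(a,y)=2$ and $p=2$.
\begin{itemize}
\item Every triple avoiding $A$ is degenerate, so it satisfies the inequality.
\item Yet $d_2(a,y)=2>\sqrt2=(d_2(a,z),d_2(z,y))_2$.
\item Correspondingly, $W_2(z,y+z)\le\sqrt2<2=W_2(0,y)$.
\end{itemize}
So in this example the case you dismissed is the entire content of necessity.

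\textbf{The fix.} Apply your crossed-matching argument with $\alpha=0$, the class of $x\in A$. Compare $W_p(0,y)=d(y,A)$ with the matching that sends the left copy of $z$ to $y$ and the right copy of $z$ to the diagonal; its cost is $(d_p(z,y),d(z,A))_p$. Read this way, your main argument already covers every triple, and no separate case is needed.

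\textbf{Two smaller points.} The singleton identity $W_p(x,y)=d_p(x,y)$ needs its lower bound stated: any matching either pairs $x$ with $y$ or sends both to $A$, exactly as in the proof of Corollary~\ref{cor:input-output-discrete} for $p=1$. Conversely, the ``main obstacle'' you flag in sufficiency is not one. With ground cost $d_p$ on all of $X$, your hypothesis is the $p$-metric inequality for every triple in $X$, including those meeting $A$. Moreover, the surgery does not increase the cost of \emph{every} matching, so no $\varepsilon$-optimality is required.
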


This characterization is due to \cite{bubenik2022virtualpd}. In particular, $W_1[d]$ is translation invariant for every metric pair, whereas for metric spaces with nontrivial length geometry, translation invariance fails for all $p>1$. Since translation invariance is necessary for extending the Wasserstein distance to the Grothendieck group of diagrams, the choice $p=1$ is forced and canonical.

Let $K(X,A)$ denote the Grothendieck group of the commutative monoid $D(X,A)$. Each element of $K(X,A)$ is represented by a formal difference $\alpha-\beta$ with $\alpha,\beta \in D(X,A)$. Define
\begin{equation}\label{eq:rho}
\rho(\alpha-\beta,\gamma-\delta) := W_1(\alpha+\delta,\gamma+\beta).
\end{equation}

\begin{theorem}\label{thm:rho}
The function $\rho$ is a well-defined translation-invariant metric on $K(X,A)$~\cite{bubenik2022virtualpd} and extends $W_1$ along the canonical embedding $D(X,A)\hookrightarrow K(X,A)$.
\end{theorem}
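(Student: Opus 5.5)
The proof uses two facts about $W_1$ on $D(X,A)$, recalled from \cite{bubenik2022virtualpd}: it is a metric (possibly after identifying diagrams at distance zero under the standing separation assumptions), and it is translation invariant, $W_1(\alpha+\gamma,\beta+\gamma)=W_1(\alpha,\beta)$. The latter is the case $p=1$ of Theorem~\ref{thm:p-canonical}, since $d_1$ always satisfies the triangle inequality. The plan is to check, in order, well-definedness, the metric axioms, translation invariance, and the extension property.

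\textbf{Well-definedness.} Elements of $K(X,A)$ are classes of pairs under $(\alpha,\beta)\sim(\alpha',\beta')$ iff $\alpha+\beta'+\kappa=\alpha'+\beta+\kappa$ for some $\kappa$. Since $D(X,A)$ is a free commutative monoid, it is cancellative, so one may take $\kappa=0$. Suppose $\alpha-\beta=\alpha'-\beta'$, so $\alpha+\beta'=\alpha'+\beta$. Then
\[
W_1(\alpha'+\delta,\gamma+\beta')=W_1(\alpha'+\delta+\beta,\gamma+\beta'+\beta)=W_1(\alpha+\beta'+\delta,\gamma+\beta+\beta')=W_1(\alpha+\delta,\gamma+\beta),
\]
where the first and last equalities are translation invariance and the middle one substitutes $\alpha'+\beta=\alpha+\beta'$. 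The same argument handles a change of representative in the second slot.

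\textbf{Metric axioms.} Symmetry follows from the symmetry of $W_1$. For the triangle inequality, take $x=\alpha-\beta$, $y=\gamma-\delta$ and $z=\epsilon-\zeta$. Translating by $\zeta$ and then by $\beta$ gives
\[
\rho(x,y)=W_1(\alpha+\delta+\zeta,\gamma+\beta+\zeta)\le W_1(\alpha+\delta+\zeta,\epsilon+\beta+\delta)+W_1(\epsilon+\beta+\delta,\gamma+\beta+\zeta),
\]
and cancelling $\delta$ and $\beta$ by translation invariance turns the right-hand side into $\rho(x,z)+\rho(z,y)$. For definiteness, $\rho(x,y)=0$ forces $\alpha+\delta=\gamma+\beta$ in $D(X,A)$, and hence $x=y$ in $K(X,A)$.

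\textbf{Translation invariance and extension.} Adding $\eta-\theta$ to both arguments gives
\[
\rho\bigl((\alpha+\eta)-(\beta+\theta),(\gamma+\eta)-(\delta+\theta)\bigr)=W_1(\alpha+\delta+\eta+\theta,\gamma+\beta+\eta+\theta),
\]
which equals $\rho(x,y)$ by translation invariance of $W_1$. For the extension property, the embedding sends $\alpha\mapsto\alpha-0$, so $\rho(\alpha-0,\gamma-0)=W_1(\alpha,\gamma)$.

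\textbf{Main obstacle.} The substantive input is translation invariance of $W_1$ on $D(X,A)$, and in particular the inequality $W_1(\alpha+\gamma,\beta+\gamma)\ge W_1(\alpha,\beta)$. An optimal matching for the larger diagrams need not match $\gamma$ to itself. To handle this, I would decompose such a matching into chains and cycles, going from $\alpha$ through copies of $\gamma$ to $\beta$ or to $A$. The triangle inequality for $d_1$ then shortcuts each chain to a direct match, with detours through $A$ costing $d(\cdot,A)$, which does not increase the cost. Since this is already established in \cite{bubenik2022virtualpd} through Theorem~\ref{thm:p-canonical}, I would cite it rather than reprove it.
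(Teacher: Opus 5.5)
Your argument is correct and is the standard Grothendieck-completion verification from \cite{bubenik2022virtualpd}; the paper gives no proof of its own and only cites that source. As you set them up, well-definedness, the triangle inequality, invariance of $\rho$ and the extension property all reduce to translation invariance of $W_1$ on $D(X,A)$, which is the case $p=1$ of Theorem~\ref{thm:p-canonical}.

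The one point to tighten is definiteness, which needs $W_1$ to be a genuine metric on $D(X,A)$. This holds exactly when $d(x,A)>0$ for all $x\in X\setminus A$, the separation of $(X/A,d_1)$ that the paper tacitly assumes. The cost of a matching depends only on the induced partial bijection between the finite supports, so the infimum is attained, and a zero-cost matching then forces equality. Your parenthetical fallback of identifying diagrams at distance zero does not work: it would replace $K(X,A)$ by a different group rather than prove the statement.
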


Let $V(X)$ denote the free real vector space on $X$ and define $V(X,A):=V(X)/V(A)$. \cite{bubenik2022virtualpd} defines a canonical norm $\|\cdot\|_{W_1}$ on $V(X,A)$ extending $\rho$ on $K(X,A)$. Let $\widehat V(X,A)$ denote the Banach completion.

\begin{theorem}\label{thm:BE-chain}
There is a chain of isometric embeddings~\cite{bubenik2022virtualpd}
\[
(D(X,A),W_1)\hookrightarrow (K(X,A),\rho) \hookrightarrow (V(X,A),\|\cdot\|_{W_1}) \hookrightarrow (\widehat V(X,A),\|\cdot\|_{W_1}).
\]
\end{theorem}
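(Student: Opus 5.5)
We prove the three embeddings in turn and then compose them. For the first, $(D(X,A),W_1)\hookrightarrow(K(X,A),\rho)$, we use the canonical map $\alpha\mapsto\alpha-0$. By \eqref{eq:rho},
\[
\rho(\alpha-0,\gamma-0)=W_1(\alpha+0,\gamma+0)=W_1(\alpha,\gamma).
\]
So this map is an isometry, which is the extension property already stated in Theorem~\ref{thm:rho}. Its injectivity follows from cancellativity of $D(X,A)$, since $D(X)/D(A)$ is free on $X\setminus A$.

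For the second embedding, we write $V(X,A)=V(X)/V(A)$. This space is the free real vector space on $X\setminus A$, and $K(X,A)$ is the free abelian group on the same generators, so $K(X,A)$ sits inside $V(X,A)$ as the integer lattice. The norm $\|\cdot\|_{W_1}$ is taken to be the Kantorovich--Rubinstein (Arens--Eells) norm. For $v=\sum_i c_i x_i$ we set
\[
\|v\|_{W_1}=\inf\Bigl\{\sum_j |a_j|\,d_1(y_j,z_j)\ :\ v=\sum_j a_j([y_j]-[z_j])\Bigr\},
\]
with $[a]=0$ for $a\in A$. Equivalently, $\|v\|_{W_1}$ is the supremum of $\sum_i c_i f(x_i)$ over $1$-Lipschitz $f$ on $(X/A,d_1)$ with $f([A])=0$. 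Since the norm is translation invariant by construction, it suffices to show that $\|g\|_{W_1}=\rho(g,0)$ for $g=\alpha-\beta\in K(X,A)$. By \eqref{eq:rho}, $\rho(g,0)=W_1(\alpha,\beta)$.

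This reduces the second embedding to the identity $W_1(\alpha,\beta)=\|\alpha-\beta\|_{W_1}$, and this is the step we expect to be the main obstacle. The inequality $\|\alpha-\beta\|_{W_1}\le W_1(\alpha,\beta)$ is immediate. A matching $\sigma$ pairs points $x$ with $y$, or matches points to $A$ at cost $d_1(x,A)$. Each such pair contributes a term $[x]-[y]$ or $[x]-0$ with coefficient $1$, so every matching gives a representation whose cost equals the matching cost.

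For the reverse inequality, an arbitrary real representation must be reduced to an integral matching. We use the finite transportation problem on the finite set $S=\operatorname{supp}\alpha\cup\operatorname{supp}\beta\cup\{[A]\}$. By the triangle inequality for $d_1$ (a metric on $X/A$), intermediate points in any representation can be short-cut. Hence the infimum is achieved by a transport plan between $\alpha$ and $\beta$ in which $[A]$ has unlimited supply. This plan has integer marginals, so the transportation polytope has integral vertices. An optimal integral plan exists, and it is exactly a matching in the sense of \eqref{eq:W1}. Alternatively, one can invoke Kantorovich--Rubinstein duality on the finite pointed metric space $(S,d_1)$ together with total unimodularity, which gives the same integrality conclusion. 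Because an isometric homomorphism between metric spaces is automatically injective, the map $K(X,A)\to V(X,A)$ is an isometric embedding.

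The third embedding, $V(X,A)\hookrightarrow\widehat V(X,A)$, is the canonical isometric inclusion of a normed space into its completion; it is injective because $\|\cdot\|_{W_1}$ is a genuine norm. To see that it is a norm, take $v\ne0$ and pick $x_i$ with $c_i\neq0$. The function $f=d_1(\cdot,\{x_j:j\ne i\}\cup\{[A]\})$, truncated suitably, is $1$-Lipschitz, vanishes at $[A]$, and separates $x_i$, which gives $\|v\|_{W_1}>0$. Composing the three isometric embeddings yields the chain stated in Theorem~\ref{thm:BE-chain}.
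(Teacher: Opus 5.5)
Your proof is correct, but it does something the paper does not do. The paper gives no argument for Theorem~\ref{thm:BE-chain}; it imports the result from Bubenik--Elchesen and describes $\|\cdot\|_{W_1}$ only as ``a canonical norm extending $\rho$''.

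You instead fix that norm as the Arens--Eells norm of $(X/A,d_1)$. You then prove the one nontrivial point, $\|\alpha-\beta\|_{W_1}=W_1(\alpha,\beta)$, by a shortcut-plus-integrality argument, after which the first and third embeddings are formal. The citation buys brevity, and it avoids any question of whether your norm is the one Bubenik--Elchesen call $\|\cdot\|_{W_1}$. It is, because the canonical isometry $\widehat V(X,A)\cong\mathcal F(X/A,d_1)$ quoted in Section~\ref{sec:main} forces this. Your route buys a self-contained proof in which that later identification becomes essentially definitional.

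The weight of your argument sits on the shortcut step, which you only sketch. A rigorous version goes as follows:
\begin{enumerate}
\item Orient each term of a representation $\sum_j a_j([y_j]-[z_j])$ so that $a_j>0$.
\item Read the representation as a flow whose divergence at each $u\neq[A]$ equals the coefficient of $\alpha-\beta$ at $u$, with $[A]$ a free node.
\item Decompose the flow into cycles and into paths from positive- to negative-divergence nodes. Discard the cycles, and replace each path by its endpoint pair using the triangle inequality for $d_1$.
\item Add the common part $\min(\alpha,\beta)$ on the diagonal. The result is a feasible point of the balanced transportation problem in which $[A]$ has supply equal to the total multiplicity of $\beta$, demand equal to that of $\alpha$, and zero self-cost.
\item Total unimodularity then yields an integral optimal vertex, which is a matching in the sense of \eqref{eq:W1}.
\end{enumerate}

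Three smaller remarks. Your definiteness argument needs only the trivial half (sup $\le$ inf) of Kantorovich--Rubinstein duality, and the truncation there is unnecessary. Definiteness of $\|\cdot\|_{W_1}$, and the fact that $W_1$ and $\rho$ are genuine metrics (which your ``isometric implies injective'' step uses), both rely on $d_1$ separating points of $X/A$, i.e.\ $d(x,A)>0$ for $x\notin A$. Lifting pairs $(x,[A])$ to pairs $(x,a)$ with $a\in A$ uses $A\neq\varnothing$.
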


\begin{figure}[t]
 \centering
 \includegraphics[width=0.75\textwidth]{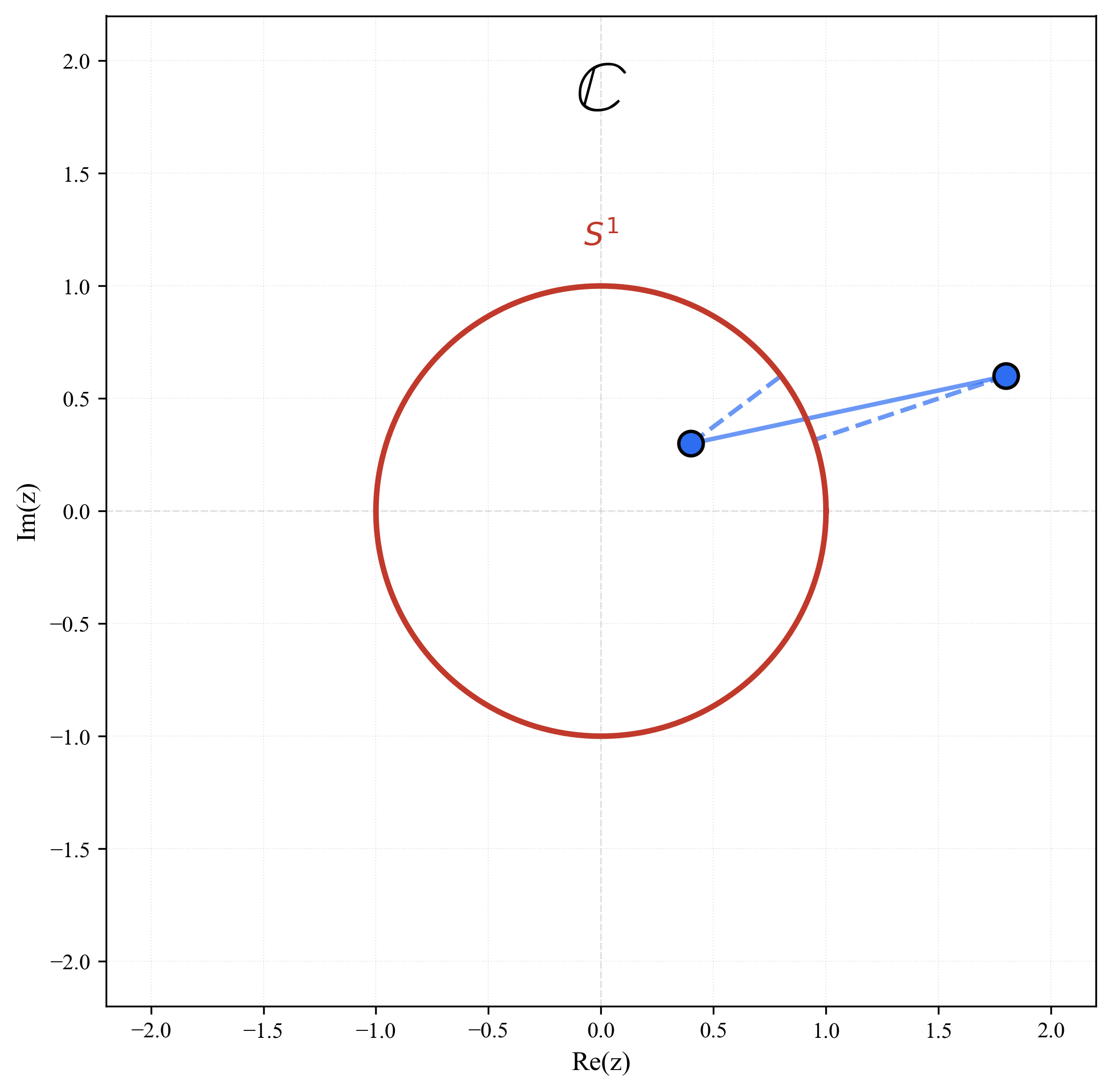}
 \caption{The metric pair \((X,d,A)=(\mathbb C,|\cdot|,S^1)\), where \(A=S^1\) is the diagonal. The strengthened metric is \(d_1(x,y)=\min\bigl(|x-y|,\; \bigl||x|-1\bigr|+\bigl||y|-1\bigr|\bigr)\). The geometry shown uses the Euclidean norm for visualization; throughout the paper, the matching metric is defined using the \(\ell_1\) norm.}
 \label{fig:metric-pair-complex}
\end{figure}

\begin{figure}[t]
 \centering
 \includegraphics[width=0.7\textwidth]{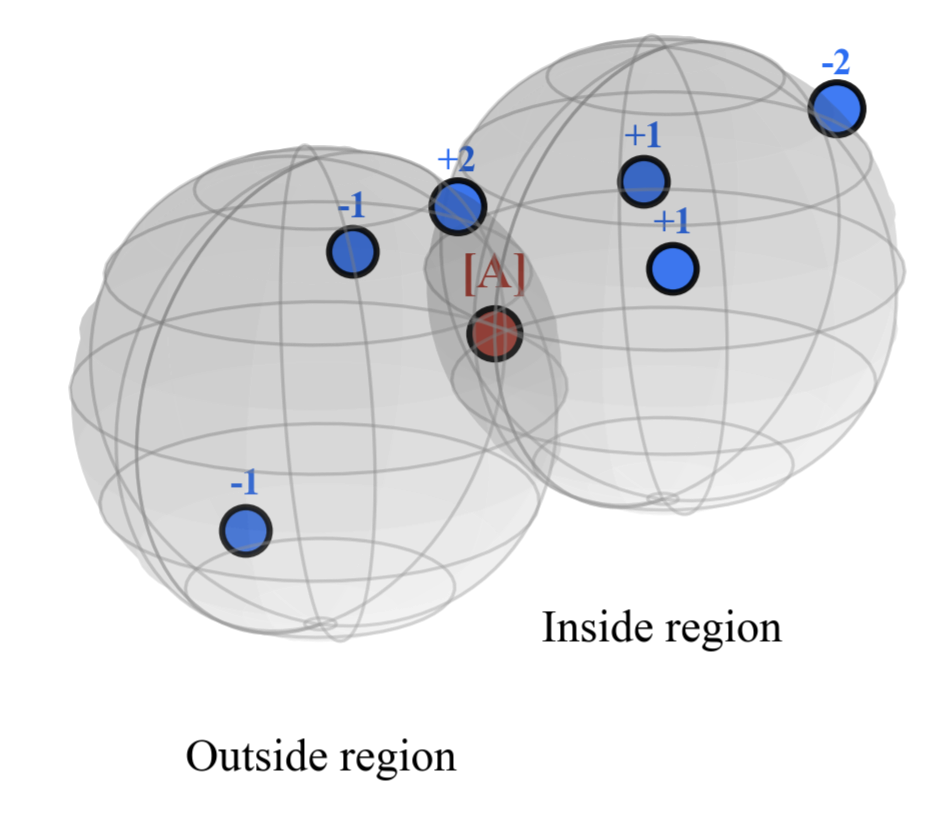}
 \caption{Continuing from Figure~\ref{fig:metric-pair-complex}, the quotient \(X/A = \mathbb C/S^1 \) obtained by collapsing the diagonal \(S^1\subset\mathbb C\) to a point \([A]\). The resulting space is homeomorphic to the wedge \(S^2\vee S^2\), corresponding to points with \(|z|<1\) and \(|z|>1\). A virtual persistence diagram is a finite signed multiset on \(X/A\setminus\{[A]\}\).}
 \label{fig:quotient-vpd}
\end{figure}

\subsection{Reproducing Kernel Hilbert Spaces for Finite Virtual Persistence Diagrams}
\label{subsec:background-lc-rkhs}

Assume that $(X,d,A)$ is finite. Then the group of virtual persistence diagrams
admits a canonical isomorphism
\[
K(X,A)\cong \mathbb{Z}^{|X\setminus A|}
\]
as a discrete locally compact abelian group, whose Pontryagin dual is the torus
$\mathbb{T}^{|X\setminus A|}$ equipped with normalized Haar measure. Characters are
given by
\[
\chi_\theta(\gamma) := e^{i\langle \gamma,\theta\rangle},
\qquad
\theta\in\mathbb{T}^{|X\setminus A|},\ \gamma\in K(X,A).
\]

Following \cite{fanning2025reproducingkernelhilbertspaces}, the quotient metric space $(X/A,d_1)$ is modeled by a finite connected weighted graph whose shortest-path metric agrees with $d_1$. The associated graph Laplacian induces a nonnegative Dirichlet symbol
\[
\lambda:\mathbb{T}^{|X\setminus A|}\to[0,\infty),
\]
which quantifies the oscillation of characters with respect to the Wasserstein-lifted metric on $K(X,A)$.

For each $t>0$, define a finite positive measure on $\mathbb{T}^{|X\setminus A|}$ by
\[
d\nu_t(\theta) := e^{-t\lambda(\theta)}\,d\mu(\theta),
\]
where $\mu$ denotes Haar measure. The corresponding Fourier--Stieltjes transform defines a translation-invariant positive definite kernel
\[
k_t(\gamma,\eta)
:= \int_{\mathbb{T}^{|X\setminus A|}} \chi_\theta(\gamma-\eta)\,d\nu_t(\theta),
\qquad
\gamma,\eta\in K(X,A),
\]
and hence a reproducing kernel Hilbert space $H_t$ of functions on $K(X,A)$.

The main analytic results of \cite{fanning2025reproducingkernelhilbertspaces} provide explicit global Lipschitz bounds for functions in $H_t$ with respect to the Grothendieck metric $\rho$. In spectral form, one obtains the following estimate.

\begin{corollary}[Spectral form {\cite[Corollary~2]{fanning2025reproducingkernelhilbertspaces}}]
For every $t>0$ and $f\in H_t$,
\[
\mathrm{Lip}_\rho(f)
\le
\frac{\pi}{d_{\min}\sqrt{2w_{\min}}}\,
\|f\|_{H_t}
\left(
\int_{\mathbb{T}^{|X\setminus A|}}
\lambda(\theta)\,e^{-t\lambda(\theta)}\,d\mu(\theta)
\right)^{1/2},
\]
where $w_{\min}$ and $d_{\min}$ are the minimal edge weight and minimal edge length, respectively.
\end{corollary}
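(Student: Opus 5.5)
The plan is to derive the bound from the reproducing property together with the translation-invariant structure of $k_t$.

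\emph{Step 1: reduce to the kernel.} For $f\in H_t$ and $\gamma,\eta\in K(X,A)$, write $f(\gamma)-f(\eta)=\langle f,k_t(\cdot,\gamma)-k_t(\cdot,\eta)\rangle_{H_t}$. By Cauchy--Schwarz,
\[
|f(\gamma)-f(\eta)|\le \|f\|_{H_t}\,\bigl(2k_t(0,0)-2\,\mathrm{Re}\,k_t(\gamma-\eta,0)\bigr)^{1/2}.
\]

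\emph{Step 2: Fourier form of the feature distance.} Put $g=\gamma-\eta$ and insert the Fourier--Stieltjes representation of $k_t$:
\[
2k_t(0,0)-2\,\mathrm{Re}\,k_t(g,0)=\int_{\mathbb T^{|X\setminus A|}} |1-\chi_\theta(g)|^2\,e^{-t\lambda(\theta)}\,d\mu(\theta).
\]
It is therefore enough to bound $|1-\chi_\theta(g)|^2$ by $C\,\rho(g,0)^2\,\lambda(\theta)$ with $C=\pi^2/(2d_{\min}^2w_{\min})$. Taking square roots and dividing by $\rho(\gamma,\eta)$ then gives the stated Lipschitz estimate.

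\emph{Step 3: the pointwise character bound (main obstacle).}
\begin{itemize}
\item Write $g$ as a sum of $m$ signed unit generators $\pm e_x$. Since $K(X,A)$ is discrete, $\rho(g,0)\ge d_{\min}$ for $g\neq 0$.
\item Use the optimal matching realizing $W_1$ and route each matched pair along shortest paths in the graph whose metric is $d_1$ (paths through $[A]$ are allowed). This expresses $g$ as a signed sum of edge increments. The number of edges used is at most $\rho(g,0)/d_{\min}$, because each edge has length at least $d_{\min}$.
\item The phase $\langle g,\theta\rangle$ then telescopes into a sum of edge phase differences $\theta_u-\theta_v$, with $\theta_{[A]}=0$.
\item Cauchy--Schwarz gives $|\langle g,\theta\rangle|^2\le (\#\text{edges})\sum_{\text{edges}}|\theta_u-\theta_v|^2$.
\item If the Dirichlet symbol is $\lambda(\theta)=\sum_{uv} w_{uv}|1-e^{i(\theta_u-\theta_v)}|^2$ (as in the cited construction), use $|\phi|\le \tfrac{\pi}{2}|1-e^{i\phi}|$ for $\phi\in[-\pi,\pi]$ on each edge. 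This gives $\sum_{\text{edges}}|\theta_u-\theta_v|^2\le \frac{\pi^2}{4 w_{\min}}\lambda(\theta)$.
\item Combining with $|1-e^{i\phi}|\le|\phi|$ yields $|1-\chi_\theta(g)|^2\le \frac{\pi^2}{4w_{\min}}\cdot\frac{\rho(g,0)}{d_{\min}}\lambda(\theta)$.
\end{itemize}

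\emph{Step 4: the remaining factor.} Step~3 gives one power of $\rho(g,0)$, but the target needs $\rho(g,0)^2$. I would supply the missing power using $\rho(g,0)\ge d_{\min}$, i.e.\ $\rho/d_{\min}\le \rho^2/d_{\min}^2$. This produces $|1-\chi_\theta(g)|^2\le \frac{\pi^2}{4w_{\min}d_{\min}^2}\rho(g,0)^2\lambda(\theta)$. With the factor $2$ in front of the integral in Step~2 one gets $\mathrm{Lip}\le \frac{\pi}{d_{\min}\sqrt{2w_{\min}}}\|f\|_{H_t}(\int\lambda e^{-t\lambda}d\mu)^{1/2}$, up to the exact bookkeeping of the constant.

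The delicate points are these:
\begin{itemize}
\item phases must be taken modulo $2\pi$ in $[-\pi,\pi]$ edge by edge, before telescoping, so that the $\pi/2$ inequality applies;
\item the normalization of $\lambda$ must match the cited Dirichlet symbol.
\end{itemize}
I would check both against \cite{fanning2025reproducingkernelhilbertspaces}.
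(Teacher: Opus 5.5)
\textbf{Gap in Step~3.} This paper gives no proof of the corollary; it quotes it from \cite{fanning2025reproducingkernelhilbertspaces}, so your argument has to stand on its own. Most of it is sound: the reproducing-kernel reduction, the identity $2k_t(0,0)-2\,\mathrm{Re}\,k_t(g,0)=\int|1-\chi_\theta(g)|^2e^{-t\lambda}\,d\mu$, and routing an optimal matching along shortest graph paths with reduced phases $\phi_e\in[-\pi,\pi]$. But Step~3 contains a false inequality.

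The $N$ edges in your routing are counted with multiplicity, while $\lambda(\theta)$ contains each graph edge only once. So the bullet claiming $\sum\phi_{e_j}^2\le\frac{\pi^2}{4w_{\min}}\lambda(\theta)$, summed over traversals, fails whenever an edge is traversed more than once, and repetition is unavoidable in general. Take a single edge of length $d$ and weight $w$ joining $[A]$ to $u$, and let $g=3e_u$. Your intermediate bound $|1-\chi_\theta(g)|^2\le\frac{\pi^2}{4w_{\min}}\frac{\rho(g,0)}{d_{\min}}\lambda(\theta)$ then reads $|1-e^{3i\theta_u}|^2\le\frac{3\pi^2}{4}|1-e^{i\theta_u}|^2$. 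This is false as $\theta_u\to 0$, since $9>3\pi^2/4$. For $g=me_u$ the left side behaves like $m^2\theta_u^2$, so no bound linear in $\rho(g,0)$ can hold. Step~4 then applies the valid inequality $\rho\ge d_{\min}$ to this invalid estimate. The final pointwise bound you state happens to be true, but not for the reason you give.

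\textbf{The fix.} Track multiplicities. Let $m_e$ be the number of times edge $e$ is traversed, so $\sum_e m_e=N\le\rho(g,0)/d_{\min}$. Then
\[
|1-\chi_\theta(g)|^2\le\Bigl(\sum_e m_e|\phi_e|\Bigr)^2\le\Bigl(\sum_e m_e^2\Bigr)\sum_e\phi_e^2\le N^2\,\frac{\pi^2}{4}\sum_e|1-e^{i\phi_e}|^2 ,
\]
where $\sum_e m_e^2\le N^2\le\rho(g,0)^2/d_{\min}^2$. The second power of $\rho$ comes from the multiplicities, so the appeal to discreteness in Step~4 is unnecessary.

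\textbf{The constant.} There is no extra factor $2$ in Step~2. With your normalization $\lambda=\sum_e w_e|1-e^{i\phi_e}|^2$, this argument gives the constant $\pi/(2d_{\min}\sqrt{w_{\min}})$, which is smaller than the stated one, so the corollary follows a fortiori. The stated constant $\pi/(d_{\min}\sqrt{2w_{\min}})$ comes out exactly when $\lambda(\theta)=\sum_e w_e(1-\cos\phi_e)$, which is presumably the cited normalization.
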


This bound admits a purely geometric reformulation in terms of phase functions on $(X/A,d_1)$.

\begin{corollary}[Geometric form {\cite[Corollary~3]{fanning2025reproducingkernelhilbertspaces}}]
For every $t>0$ and $f\in H_t$,
\begin{align*}
\mathrm{Lip}_\rho(f)
\le {} & \|f\|_{H_t}
\left(
\int_{\mathbb{T}^{|X\setminus A|}}
\mathrm{Lip}_{d_1}(\phi_\theta)^2
\right. \\
& \left.
\times
\exp\!\left(
-\,t\,\frac{8w_{\min}d_{\min}^2}{\pi^4}\,
\mathrm{Lip}_{d_1}(\phi_\theta)^2
\right)
\,d\mu(\theta)
\right)^{1/2}.
\end{align*}
Here $\phi_\theta$ denotes the phase function associated with $\chi_\theta$.
\end{corollary}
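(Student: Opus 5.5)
We will not derive this from the spectral form: $\lambda e^{-t\lambda}$ is not monotone in $\lambda$, so comparing integrands pointwise would need two-sided bounds on $\lambda$ with matched constants. Instead we rerun the reproducing-kernel argument behind it and replace $\lambda$ by the phase Lipschitz constant only inside the exponential weight.

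\emph{Step 1 (increments of $f$).} For $f\in H_t$ and $\gamma,\eta\in K(X,A)$, the reproducing property and Cauchy--Schwarz give
\[
|f(\gamma)-f(\eta)|\le \|f\|_{H_t}\,\|k_t(\cdot,\gamma)-k_t(\cdot,\eta)\|_{H_t}.
\]
By translation invariance and the Fourier--Stieltjes representation of $k_t$,
\[
\|k_t(\cdot,\gamma)-k_t(\cdot,\eta)\|_{H_t}^2=\int_{\mathbb T^{|X\setminus A|}}\bigl|\chi_\theta(\gamma-\eta)-1\bigr|^2\,e^{-t\lambda(\theta)}\,d\mu(\theta).
\]

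\emph{Step 2 (characters versus $\rho$).} Write $\chi_\theta=e^{i\phi_\theta}$, where $\phi_\theta$ is the phase function on $X/A$. We normalize it by $\phi_\theta([A])=0$ and extend it additively to $K(X,A)$. We measure $\mathrm{Lip}_{d_1}(\phi_\theta)$ using phase differences reduced to $(-\pi,\pi]$, which is the normalization fixed in \cite{fanning2025reproducingkernelhilbertspaces}. Then $|e^{is}-1|\le|s|$ gives
\[
|\chi_\theta(g)-1|\le|\phi_\theta(g)|.
\]
Write $g=\alpha-\beta$ with $\alpha,\beta\in D(X,A)$. For any matching of $\alpha$ with $\beta$ modulo $D(A)$, $\phi_\theta(g)$ is a sum of increments of $\phi_\theta$ along matched pairs, using $\phi_\theta=0$ on $[A]$. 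Bounding each increment by $\mathrm{Lip}_{d_1}(\phi_\theta)$ times its $d_1$-cost and taking the infimum over matchings (the easy half of Kantorovich--Rubinstein duality) gives
\[
|\phi_\theta(g)|\le \mathrm{Lip}_{d_1}(\phi_\theta)\,\rho(g,0).
\]
Combining Steps 1 and 2, dividing by $\rho(\gamma,\eta)$ and taking the supremum yields
\[
\mathrm{Lip}_\rho(f)\le\|f\|_{H_t}\Bigl(\int\mathrm{Lip}_{d_1}(\phi_\theta)^2e^{-t\lambda(\theta)}\,d\mu(\theta)\Bigr)^{1/2}.
\]

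\emph{Step 3 (lower bound on the symbol).} It remains to show that for every $\theta$,
\[
\lambda(\theta)\ge\frac{8w_{\min}d_{\min}^2}{\pi^4}\,\mathrm{Lip}_{d_1}(\phi_\theta)^2 .
\]
Assume, as in \cite{fanning2025reproducingkernelhilbertspaces}, that the Dirichlet symbol has the form
\[
\lambda(\theta)=\sum_{e=\{x,y\}}w_e\,|1-e^{i\alpha_e}|^2,\qquad \alpha_e=\phi_\theta(x)-\phi_\theta(y)\in(-\pi,\pi].
\]
Then $|1-e^{i\alpha}|^2=4\sin^2(\alpha/2)\ge(4/\pi^2)\alpha^2$ for $|\alpha|\le\pi$ gives
\[
\lambda(\theta)\ge\frac{4w_{\min}}{\pi^2}\max_e\alpha_e^2 .
\]
Because the graph's shortest-path metric equals $d_1$, the ratio $|\phi_\theta(x)-\phi_\theta(y)|/d_1(x,y)$ is maximized along a single edge. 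Every edge has length at least $d_{\min}$, so
\[
\mathrm{Lip}_{d_1}(\phi_\theta)\le\max_e|\alpha_e|/d_{\min}.
\]
Hence $\lambda(\theta)\ge(4w_{\min}d_{\min}^2/\pi^2)\,\mathrm{Lip}_{d_1}(\phi_\theta)^2$. Since $4/\pi^2\ge8/\pi^4$, the stated constant follows, with room to spare. Then $e^{-t\lambda(\theta)}\le\exp\bigl(-t\,\tfrac{8w_{\min}d_{\min}^2}{\pi^4}\mathrm{Lip}_{d_1}(\phi_\theta)^2\bigr)$, and substituting into the bound of Step 2 completes the proof.

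\emph{Main obstacle.} The delicate step is Step 3, specifically reconciling the $2\pi$-ambiguity of the phase with the edge-wise form of $\lambda$. The Lipschitz constant must be taken for the wrapped phase, so that both the reduction to edges and the sine inequality on $[-\pi,\pi]$ are legitimate. If the normalization in \cite{fanning2025reproducingkernelhilbertspaces} differs, for example with a factor $\tfrac12$ in $\lambda$ or a different path argument, the extra factor $2/\pi^2$ in the stated constant should absorb the discrepancy.
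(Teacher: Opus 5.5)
This paper gives no proof of this corollary; it only quotes it from \cite[Corollary~3]{fanning2025reproducingkernelhilbertspaces}. Your proposal therefore has to stand on its own.

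Its architecture is sound. You use the increment identity of Step~1, a bound $|\chi_\theta(g)-1|\le \mathrm{Lip}_{d_1}(\phi_\theta)\,\rho(g,0)$, and a lower bound $\lambda(\theta)\ge c\,\mathrm{Lip}_{d_1}(\phi_\theta)^2$ used only inside the exponent. You are also right that one cannot pass through the spectral form. But Step~2, as written, contains a false inequality.

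You take a real-valued phase, extend it additively to $K(X,A)$, and then bound the real increments along a matching by the \emph{wrapped} Lipschitz constant. A real increment can be close to $2\pi$ while its wrapped value is tiny. For instance, let $X/A=\{[A],x,y\}$ with $d_1(x,[A])=d_1(y,[A])=1$ and $d_1(x,y)=\varepsilon$, and set $\phi_\theta(x)=\pi-\varepsilon$, $\phi_\theta(y)=-\pi+\varepsilon$. For small $\varepsilon$ the wrapped Lipschitz constant is $\pi-\varepsilon$. For $g=e_x-e_y$ one has $\rho(g,0)=\varepsilon$, but $\phi_\theta(g)=2\pi-2\varepsilon$. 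So $|\phi_\theta(g)|\le \mathrm{Lip}_{d_1}(\phi_\theta)\,\rho(g,0)$ fails, even though $|\chi_\theta(g)-1|\le 2\varepsilon$ is compatible with the bound you actually need.

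The repair is to avoid the real lift entirely. For a matching $\sigma$ of $\alpha$ with $\beta$, write $\chi_\theta(g)=\prod_{(x,y)\in\sigma}e^{i(\phi_\theta(x)-\phi_\theta(y))}$. Then use $|\prod_j z_j-1|\le\sum_j|z_j-1|$ for unimodular $z_j$, and $|e^{is}-1|\le \mathrm{dist}(s,2\pi\mathbb Z)$. Finally, bound each wrapped increment by $\mathrm{Lip}_{d_1}(\phi_\theta)\,d_1(x,y)$ and take the infimum over $\sigma$. Equivalently, regard $\phi_\theta$ as $\mathbb R/2\pi\mathbb Z$-valued, so its additive extension is a homomorphism into the circle, and use the triangle inequality of the circle metric.

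This also relocates your ``main obstacle.'' Once the phase is wrapped consistently, Step~3 is unproblematic. The circle distance satisfies the triangle inequality, so reducing to edges along a $d_1$-geodesic path is legitimate. Each wrapped $\alpha_e$ lies in $(-\pi,\pi]$, so Jordan's inequality applies. By contrast, with real lifts Step~2 would be valid as written but Step~3 would fail, since an edge whose lifted difference is near $2\pi$ contributes almost nothing to $\lambda$.

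The remaining soft point is the formula for $\lambda$, which this paper never states. The spectral-form constant $\pi/(d_{\min}\sqrt{2w_{\min}})$ is exactly what your Steps~2--3 produce for $\lambda(\theta)=\sum_e w_e(1-\cos\alpha_e)$, which is half of your assumed symbol. That normalization gives $\lambda\ge \tfrac{2w_{\min}d_{\min}^2}{\pi^2}\,\mathrm{Lip}_{d_1}(\phi_\theta)^2$. This still exceeds the required $\tfrac{8w_{\min}d_{\min}^2}{\pi^4}$ by a factor $\pi^2/4$. So, once Step~2 is repaired, the proposal proves the stated inequality under either normalization, and with a better constant.
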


\section{Classification of Virtual Persistence Diagrams}
\label{sec:classification}

In this section, we classify the metric abelian group of virtual persistence diagrams
\((K(X,A),\rho)\) with respect to local compactness in the topology induced by the
Grothendieck metric \(\rho\). We show that \((K(X,A),\rho)\) is locally compact if and only if
the metric topology is discrete.

\begin{theorem}\label{thm:morris}
If $F$ is a free abelian group that is locally compact Hausdorff, then $F$ has the
discrete topology.
\end{theorem}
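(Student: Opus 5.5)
The plan is to use the structure theory of locally compact abelian (LCA) groups together with one elementary divisibility property of free abelian groups. That property is: if $F$ is free abelian with basis $\{e_i\}$ and $0\neq x=\sum_i c_i e_i$, then $x\in nF$ forces $n$ to divide every coefficient $c_i$. Hence only finitely many $n\ge1$ satisfy $x\in nF$, since $n\le \min\{|c_i| : c_i\neq 0\}$. In particular $F$, and every subgroup of $F$, contains no nonzero element that is divisible by infinitely many integers. Note that every subgroup of a free abelian group is again free, by the Dedekind--Baer theorem, but only the divisibility property is really needed.

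Next I invoke the structure theorem for LCA groups: $F$ has an open subgroup $U\cong\mathbb R^n\times K$ with $K$ compact. Since $\mathbb R^n$ is divisible, every nonzero element of it is divisible by all integers, so the divisibility property forces $n=0$. Thus $K$ is an open compact subgroup of $F$, and it is torsion-free. It suffices to show $K=\{0\}$, because then $\{0\}$ is open and $F$ is discrete.

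This is the main step. By Pontryagin duality, the dual $\widehat K$ is a discrete abelian group. Since $K$ is torsion-free, multiplication by $m$ is injective on $K$, so its dual, multiplication by $m$ on $\widehat K$, has dense image. Because $\widehat K$ is discrete, dense image means surjective, so $\widehat K$ is divisible. A divisible discrete group decomposes as $\widehat K\cong\mathbb Q^{(I)}\oplus\bigoplus_p \mathbb Z(p^\infty)^{(J_p)}$. Dualizing back, $K$ is a product of copies of the solenoid $\widehat{\mathbb Q}$ and the $p$-adic integers $\mathbb Z_p$. If $K\neq0$, at least one factor is nonzero, and it embeds as a closed subgroup of $K$. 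The solenoid $\widehat{\mathbb Q}$ is divisible, so every nonzero element of it is divisible by all $n$. In $\mathbb Z_p$, the element $1$ is divisible by every integer prime to $p$. Either way $K$, and hence $F$, would contain a nonzero element divisible by infinitely many integers, which contradicts the divisibility property. Therefore $K=0$.

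The obstacle I anticipate is keeping the duality step rigorous: the passage from torsion-free to dual divisible, and the structure of divisible groups. A cheaper fallback covers the case where $F$ has countable rank. There $F$ is countable, and the Baire category theorem applied to the locally compact Hausdorff space $F=\bigcup_{x\in F}\{x\}$ yields an isolated point, so $F$ is discrete by homogeneity. For the general statement I would rely on the duality argument above, or cite \cite{morris1977locallycompact} for it.
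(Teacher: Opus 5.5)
Your proof is correct, and it takes a genuinely different route from the paper. The paper gives no argument at all and simply imports the statement as Theorem~31 of \cite{morris1977locallycompact}. You instead derive it from standard LCA structure theory. The principal structure theorem supplies an open subgroup $\mathbb R^n\times K$ with $K$ compact. The fact that a nonzero element of a free abelian group lies in $mF$ for only finitely many $m\ge1$ rules out $\mathbb R^n$. Pontryagin duality then turns the compact torsion-free group $K$ into a product of copies of $\widehat{\mathbb Q}$ and $\mathbb Z_p$, each of which contains a nonzero element divisible by infinitely many integers.

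The duality step you flagged is sound as written. For a continuous homomorphism $\phi$ of LCA groups, $\ker\phi$ is the annihilator of $\mathrm{im}\,\widehat\phi$. So injectivity of multiplication by $m$ on $K$ gives dense, hence (by discreteness) surjective, multiplication by $m$ on $\widehat K$.

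The citation buys brevity. Your argument buys transparency about the single algebraic input actually used, and with it a more general statement. Any abelian group $G$ in which each nonzero element lies in $mG$ for only finitely many $m$ admits no non-discrete locally compact Hausdorff group topology. Such a $G$ is automatically torsion-free, since an element of order $m$ lies in $(km+1)G$ for every $k$. This also covers non-free groups such as $\mathbb Z^{\mathbb N}$.

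One caution about your fallback: the Baire-category argument only handles countable rank. Theorem~\ref{thm:classification} is applied to $K(X,A)$ with $X\setminus A$ typically uncountable, for example a separable but uncountable $X/A$ as allowed by (H1)--(H2). So the general case, supplied by your duality argument or by the citation, is genuinely needed there.
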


This appears as Theorem~31 in~\cite{morris1977locallycompact}.

\begin{theorem}\label{thm:classification}
Let $(X,d,A)$ be a metric pair, and let $\rho$ denote the Grothendieck metric on $K(X,A)$. With respect to the metric topology induced by $\rho$, the topological group $(K(X,A),\rho)$ is locally compact if and only if it is discrete.
\end{theorem}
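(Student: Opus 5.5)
The plan is to use Theorem~\ref{thm:morris}, after checking that $K(X,A)$ is a free abelian group and that $\rho$ makes it a Hausdorff topological group.

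The direction ``discrete $\Rightarrow$ locally compact'' is immediate: every discrete space is locally compact, since each singleton is a compact open neighbourhood of its point.

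For the converse, the steps are as follows.

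\emph{Algebraic structure.} $D(X,A)=D(X)/D(A)$ is isomorphic to the free commutative monoid on $X\setminus A$. Indeed, a finite formal sum on $X$ modulo sums supported on $A$ is determined exactly by its multiplicities at points of $X\setminus A$. The Grothendieck group of the free commutative monoid on a set $S$ is the free abelian group $\mathbb Z^{(S)}$. Hence $K(X,A)\cong\mathbb Z^{(X\setminus A)}$ is free abelian. In the degenerate case $X=A$ it is the trivial group, which is both discrete and free.

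\emph{Topological structure.} By Theorem~\ref{thm:rho}, $\rho$ is a translation-invariant metric. Translation invariance gives continuity of the group operations:
\[
\rho(g+h,g'+h')\le\rho(g,g')+\rho(h,h'),\qquad \rho(-g,-g')=\rho(g',g)=\rho(g,g').
\]
The first inequality follows from the triangle inequality combined with invariance. The second follows from invariance by translating by $g+g'$. So $(K(X,A),\rho)$ is a metrizable, hence Hausdorff, topological group.

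\emph{Conclusion.} If $(K(X,A),\rho)$ is locally compact, Theorem~\ref{thm:morris} applies to this free abelian locally compact Hausdorff group and yields that the topology is discrete.

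The main point needing care is that Theorem~\ref{thm:morris} is a statement about the abstract group with some locally compact Hausdorff group topology. So I must make sure that freeness is purely algebraic. Nothing about the metric needs to be compatible with a basis. This holds by the identification above.

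If one prefers to avoid citing Morris, a fallback is a Baire-category argument. Suppose $K=K(X,A)$ is locally compact but not discrete. Then $K$ is countable or uncountable according to $X\setminus A$, and I would handle the countable case as follows. A countable locally compact Hausdorff group is a Baire space. It is a countable union of singletons, so some singleton is open, and therefore the topology is discrete.

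The uncountable case genuinely needs the structure theory of locally compact abelian groups, which is why invoking Theorem~\ref{thm:morris} directly is the cleanest route.
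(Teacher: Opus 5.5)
Your proposal is correct and follows essentially the same route as the paper: you show that translation invariance of $\rho$ makes $(K(X,A),\rho)$ a Hausdorff topological group, identify $K(X,A)\cong\bigoplus_{x\in X\setminus A}\mathbb Z$ algebraically, note that discrete implies locally compact, and invoke Theorem~\ref{thm:morris} for the converse. Your Baire-category fallback for countable $X\setminus A$ is a valid extra that the paper does not include, and you correctly recognize that it does not cover the uncountable case.
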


\begin{proof}
Since $\rho$ is a translation-invariant metric on the abelian group $K(X,A)$, it induces a Hausdorff group topology. Translation invariance means that for all $g,g',h\in K(X,A)$, $\rho(g+h,g'+h)=\rho(g,g').$ In particular, translations are isometries. Inversion is also an isometry: for all $g,g'\in K(X,A)$, translation invariance gives $\rho(-g,-g')=\rho(g,g').$

To verify joint continuity of addition, fix $g,g',h,h'\in K(X,A)$ and define $u:=g-g'$ and $v:=h-h'$. Using translation invariance and the triangle inequality, we obtain
\begin{align}\label{eq:rho-addition-continuity}
\rho(g+h,g'+h')
&=\rho\bigl((g+h)-(g'+h'),0\bigr)\notag\\
&=\rho(u+v,0)\notag\\
&=\rho(u,-v)\notag\\
&\le \rho(u,0)+\rho(0,-v)\notag\\
&=\rho(u,0)+\rho(v,0)\notag\\
&=\rho(g,g')+\rho(h,h').
\end{align}
Thus, addition is jointly continuous, and $(K(X,A),\rho)$ is a Hausdorff topological abelian group.

We now identify $K(X,A)$ algebraically. The commutative monoid $D(X)$ is free on $X$. Passing to the quotient $D(X,A)=D(X)/D(A)$ forces every generator supported in $A$ to be identified with $0$. Equivalently, $D(X,A)$ satisfies the universal property of the free commutative monoid on $X\setminus A$. Its Grothendieck completion is therefore the free abelian group on $X\setminus A$, that is, $K(X,A)\cong \bigoplus_{x\in X\setminus A}\mathbb Z.$

If $X\setminus A=\varnothing$, then $K(X,A)=\{0\}$, which is discrete and locally compact, and the conclusion follows immediately. Assume for the remainder of this proof that $X\setminus A\neq\varnothing$. 

If $(K(X,A),\rho)$ is discrete, then $\{0\}$ is an open neighborhood of the identity. Since $\{0\}$ is finite, it is compact, and therefore $(K(X,A),\rho)$ is locally compact.

Conversely, suppose $(K(X,A),\rho)$ is locally compact. Then it is a Hausdorff locally compact topological group whose underlying group is free abelian. By Theorem~\ref{thm:morris}, any locally compact Hausdorff group topology on a free abelian group is necessarily discrete. Hence the topology induced by $\rho$ on $K(X,A)$ is discrete.
\end{proof}

Theorem~\ref{thm:classification} reduces the question of local compactness for
$(K(X,A),\rho)$ to discreteness of the metric topology induced by $\rho$. Since the
Grothendieck metric is defined functorially from the $1$--strengthened metric on the
pointed space $(X/A,d_1,[A])$, the following corollary identifies the corresponding
metric condition on $(X/A,d_1,[A])$. Corollary~\ref{cor:input-output-discrete} shows that
discreteness of $(K(X,A),\rho)$ is equivalent to uniform discreteness of
$(X/A,d_1,[A])$.

\begin{corollary}\label{cor:input-output-discrete}
The following are equivalent:
\begin{enumerate}
\item[(i)] $(K(X,A),\rho)$ is locally compact.
\item[(ii)] $(K(X,A),\rho)$ is discrete.
\item[(iii)] $(X/A,d_1,[A])$ is uniformly discrete.
\end{enumerate}
\end{corollary}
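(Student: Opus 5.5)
(i)$\Leftrightarrow$(ii) is exactly Theorem~\ref{thm:classification}, so it remains to prove (ii)$\Leftrightarrow$(iii). Here uniformly discrete means there is $\varepsilon>0$ with $d_1(x,y)\ge\varepsilon$ for all distinct $x,y\in X/A$. The basepoint $[A]$ is one of the points, so this includes $d_1(x,[A])=d(x,A)\ge\varepsilon$ for every $x\in X\setminus A$. The link between the two spaces is the chain of isometric embeddings in Theorem~\ref{thm:BE-chain}. Concretely, for $x,y\in X\setminus A$ the single-point diagrams satisfy
\[
\rho([x],[y])=W_1([x],[y])=d_1(x,y),\qquad \rho([x],0)=W_1([x],0)=d(x,A).
\]
The first equality is the $W_1$ value between one-point diagrams: the only competing matching sends both points to $A$, at cost $d(x,A)+d(y,A)$, and $d_1$ already takes that minimum.

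For (ii)$\Rightarrow$(iii), I argue by contraposition. Suppose $(X/A,d_1)$ is not uniformly discrete. Then there are distinct points $p_n\neq q_n$ in $X/A$ with $d_1(p_n,q_n)\to0$, and each pair falls into one of two cases.
\begin{itemize}
\item If one of the two points is $[A]$, say $q_n=[A]$, then $g_n=[p_n]$ is nonzero and $\rho(g_n,0)=d(p_n,A)\to0$.
\item If both points lie in $X\setminus A$, then $g_n=[p_n]-[q_n]$ is nonzero, because $K(X,A)$ is free on $X\setminus A$, and translation invariance gives $\rho(g_n,0)=\rho([p_n],[q_n])=d_1(p_n,q_n)\to0$.
\end{itemize}
In either case nonzero elements accumulate at $0$, so $\{0\}$ is not open and the topology is not discrete.

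For (iii)$\Rightarrow$(ii), suppose $d_1\ge\varepsilon$ on distinct points. I show $\rho(g,0)\ge\varepsilon$ for every nonzero $g$, which gives discreteness by translation invariance. Write $g=\alpha-\beta$ with $\alpha,\beta\in D(X,A)$ having disjoint supports in $X\setminus A$; this is possible because $K(X,A)$ is free. Then $\alpha\neq\beta$ and $\rho(g,0)=W_1(\alpha,\beta)$. Take any matching $\sigma$ between $\alpha$ and $\beta$ modulo $D(A)$.
\begin{itemize}
\item Since the supports are disjoint and $\alpha+\beta\neq0$, some point of $\alpha$ or $\beta$ outside $A$ must be paired either with a different point of $X\setminus A$ or with a point of $A$.
\item The cost of such a pair is at least $\min\bigl(d_1(x,y),\,d_1(x,[A])\bigr)\ge\varepsilon$.
\end{itemize}
Hence every term of the infimum in \eqref{eq:W1} is at least $\varepsilon$, so $W_1(\alpha,\beta)\ge\varepsilon$.

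The main obstacle is making this matching bookkeeping rigorous from the definition of matchings modulo $D(A)$. In particular I need that the cost of a pair $(x,a)$ with $a\in A$ is $d_1(x,a)\ge d(x,A)$, and that pairs with both entries in $A$ cost zero and can be discarded. If that step becomes messy, I will instead invoke the Kantorovich--Rubinstein dual description of $\|\cdot\|_{W_1}$ on $V(X,A)\cong\mathcal F(X/A,d_1)$. The test function $f=\tfrac{\varepsilon}{2}\mathbf 1_{\{x_0\}}$, for a point $x_0\neq[A]$ with nonzero coefficient in $g$, is $1$-Lipschitz for $d_1$ under (iii) and vanishes at $[A]$. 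Pairing it with $g$ yields $\|g\|\ge\varepsilon/2$, which is enough for discreteness.
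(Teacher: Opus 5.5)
Your proposal is correct and follows the paper's route: (i)$\Leftrightarrow$(ii) from Theorem~\ref{thm:classification}, the singleton identities with the same basepoint/non-basepoint case split for the contrapositive of (ii)$\Rightarrow$(iii), and a matching lower bound $\rho(g,0)\ge\varepsilon$ for (iii)$\Rightarrow$(ii). Your use of the disjoint-support decomposition $g=g_+-g_-$ to force some point to be moved is a slightly more direct substitute for the paper's pushforward-to-$X/A$ marginal comparison, and it is already rigorous as written, so the Kantorovich--Rubinstein fallback is unnecessary.
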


\begin{proof}
The equivalence (i)$\iff$(ii) is Theorem~\ref{thm:classification}. We prove (ii)$\iff$(iii).

For $u\in X/A\setminus\{[A]\}$, let $e_u\in D(X,A)$ denote the singleton diagram supported at $u$. Fix $x\in X\setminus A$ with $\pi(x)=u$. Since the quotient map restricts to a bijection $\pi\colon X\setminus A\to X/A\setminus\{[A]\}$, the value $d(x,A)$ depends only on $u$, and we define $d_1(u,[A]):=d(x,A)$. For any $z\in X$ and any $a\in A$, the definition of the strengthened metric gives $d_1(z,a)=d(z,A)$, and therefore $d_1(\pi(z),[A])\le d_1(z,a)$.

We record two singleton identities. For distinct $u,v\in X/A\setminus\{[A]\}$, $W_1(e_u,e_v)=d_1(u,v)$, and for $u\in X/A\setminus\{[A]\}$, $W_1(e_u,0)=d_1(u,[A])$.

To prove the first identity, fix $x,y\in X\setminus A$ with $\pi(x)=u$ and $\pi(y)=v$. For the upper bound, matching $x$ directly to $y$ gives cost $d(x,y)$. Given $\eta>0$, choose $a,a'\in A$ with $d(x,a)\le d(x,A)+\eta$ and $d(y,a')\le d(y,A)+\eta$. Matching $x$ to $a$ and $a'$ to $y$ gives cost at most $d(x,A)+d(y,A)+2\eta$. Taking the infimum over matchings and letting $\eta\downarrow 0$ yields $W_1(e_u,e_v)\le \min(d(x,y),d(x,A)+d(y,A)) = d_1(u,v)$.

For the lower bound, let $\sigma$ be any admissible matching between $e_u$ and $e_v$, and let $\bar\sigma$ be its pushforward along $\pi\times\pi$, which is an element of $D((X/A)\times(X/A))$. We write $e_u$ also for the singleton generator at $u$ in $D(X/A)$, and write $\delta_{[A]}$ for the singleton generator at $[A]$. Replacing any coordinate $a\in A$ by $[A]$ cannot increase cost, so the total cost of $\bar\sigma$ computed with $d_1$ on $X/A$ is at most the total cost of $\sigma$ computed with $d_1$ on $X$. The marginals of $\bar\sigma$ have the form $\mathrm{pr}_{1\#}\bar\sigma=e_u+m\,\delta_{[A]}$ and $\mathrm{pr}_{2\#}\bar\sigma=e_v+m\,\delta_{[A]}$ for some $m\in\mathbb N$. Hence every pair in $\bar\sigma$ lies in $\{u,[A]\}\times\{v,[A]\}$. If $(u,v)$ occurs, then the cost is at least $d_1(u,v)$. Otherwise, the marginal constraints force both $(u,[A])$ and $([A],v)$ to occur, and the total cost is at least $d_1(u,[A])+d_1([A],v)\ge d_1(u,v)$ by the triangle inequality for $d_1$. Thus every admissible $\sigma$ has cost at least $d_1(u,v)$, proving the first identity.

For the second identity, the upper bound follows by matching $x$ to any $a\in A$, which gives $W_1(e_u,0)\le d_1(x,a)=d_1(u,[A])$. For the lower bound, let $\sigma$ be an admissible matching between $e_u$ and $0$, and let $\bar\sigma$ be its pushforward. The first marginal of $\bar\sigma$ contains one copy of $u$, while the second marginal contains none, so $\bar\sigma$ must contain at least one pair $(u,[A])$. Hence the total cost of $\bar\sigma$ is at least $d_1(u,[A])$, and since pushforward does not increase cost we obtain $W_1(e_u,0)\ge d_1(u,[A])$.

We now prove (ii)$\Rightarrow$(iii). Suppose $(X/A,d_1,[A])$ is not uniformly discrete. Then there exist distinct $u_n,v_n\in X/A$ with $d_1(u_n,v_n)\to 0$. Let $I:=\{n : u_n\neq[A]\text{ and }v_n\neq[A]\}$. If $I$ is infinite, pass to a subsequence indexed by $I$ and define $g_n:=e_{u_n}-e_{v_n}$. Then $\rho(g_n,0)=W_1(e_{u_n},e_{v_n})=d_1(u_n,v_n)\to 0$. If $I$ is finite, then for all sufficiently large $n$ exactly one of $u_n$ or $v_n$ equals $[A]$. If $v_n=[A]$ infinitely often, pass to that subsequence and define $g_n:=e_{u_n}$. Otherwise, $u_n=[A]$ infinitely often; pass to that subsequence and define $g_n:=-e_{v_n}$. In both cases, invariance of $\rho$ under inversion and the singleton identity give $\rho(g_n,0)=d_1(u_n,v_n)\to 0$. Hence $0$ is not isolated and $(K(X,A),\rho)$ is not discrete.

Finally, assume (iii) holds with constant $\varepsilon>0$ and let $g\in K(X,A)\setminus\{0\}$. Write $g=\alpha-\beta$ with $\alpha,\beta\in D(X,A)$. Since $\alpha\neq\beta$, any admissible matching $\sigma$ between $\alpha$ and $\beta$ has a pushforward $\bar\sigma$ whose marginals in $D(X/A)$ have the form $\alpha+m\,\delta_{[A]}$ and $\beta+m\,\delta_{[A]}$. If every pair $(u,v)$ in $\bar\sigma$ satisfied $u=v$, then the marginals would coincide, and we would have $\alpha=\beta$, a contradiction. Hence $\bar\sigma$ contains a pair $(u,v)$ with $u\neq v$, and uniform discreteness gives $d_1(u,v)\ge\varepsilon$. Therefore every admissible $\sigma$ has total cost at least $\varepsilon$, so $\rho(g,0)=W_1(\alpha,\beta)\ge\varepsilon$. Thus $\{0\}$ is open and $(K(X,A),\rho)$ is discrete.
\end{proof}

\section{Main Results}
\label{sec:main}

By Corollary~\ref{cor:input-output-discrete}, whenever $(K(X,A),\rho)$ is non-discrete, the metric topology induced by $\rho$ is not locally compact. The results of this section therefore proceed by working with the Banach completion $B:=\widehat V(X,A)$ of $K(X,A)$ with respect to $\|\cdot\|_{W_1}$, which is canonically isometric to the Lipschitz-free (Arens--Eells) Banach space $\mathcal F(X/A,d_1)$ by~\cite[Section~7]{bubenik2022virtualpd}.

\subsection{Hypotheses}
\label{subsec:main-hypotheses}

All results in this section are proved under the following standing assumptions.

\begin{definition}\label{def:standing-hypotheses}
The \emph{standing hypotheses} are:
\begin{enumerate}
\item[(H1)] The pointed metric space $(X/A,d_1,[A])$ is separable.
\item[(H2)] The pointed metric space $(X/A,d_1,[A])$ is not uniformly discrete.
\end{enumerate}
\end{definition}

Assumption~\emph{(H2)} is equivalent, by Corollary~\ref{cor:input-output-discrete}, to $(K(X,A),\rho)$ being non-discrete and hence not locally compact in the metric topology induced by $\rho$. The character-based construction recalled in Section~\ref{subsec:background-lc-rkhs}, which passes through Pontryagin duality and Haar measure on the dual group, requires local compactness and therefore does not apply under \emph{(H2)}. We consequently base our constructions on the Banach completion $B$.

Assumption~\emph{(H1)} guarantees that $B\cong \mathcal F(X/A,d_1)$ is separable (Lemma~\ref{lem:B-separable}). This separability is used to select countable norming families in $B^\ast$, and hence, to define explicit embeddings of $B$ into $\ell^2$ by means of countably many linear functionals. The resulting Gaussian kernel constructions are therefore genuinely countable and well-defined; moreover, they depend only on the induced covariance operator on $B$ and not on the particular choice of norming data used to realize it.

\subsection{Structural consequences}
\label{subsec:main-structural}

In the finite case treated in Section~\ref{subsec:background-lc-rkhs}, the discrete LCA structure of $K(X,A)$ and its Pontryagin dual torus support character-based Fourier--Stieltjes kernels. We work under the standing hypotheses of Definition~\ref{def:standing-hypotheses} to study the non--locally compact case, where Pontryagin duality no longer applies; in this subsection we introduce the Lipschitz, mass, and covering-number invariants through which we derive Gaussian RKHS bounds in Section~\ref{subsec:main-rkhs}.

\begin{lemma}\label{lem:infinite-rank}
Under assumption~\emph{(H2)}, the set $X\setminus A$ is infinite. Consequently, the abelian group $K(X,A)$ has infinite rank and the Banach space $B$ is infinite-dimensional.
\end{lemma}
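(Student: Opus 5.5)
We argue by contraposition: if $X\setminus A$ is finite, then $(X/A,d_1,[A])$ is uniformly discrete, contradicting \emph{(H2)}. Recall from the proof of Corollary~\ref{cor:input-output-discrete} that the quotient map restricts to a bijection $\pi\colon X\setminus A\to X/A\setminus\{[A]\}$. So if $X\setminus A$ is finite, then $X/A$ is a finite set, with at most $|X\setminus A|+1$ points. Since $d_1$ is a metric on $X/A$, the quantity
\[
\varepsilon:=\min\{d_1(u,v): u,v\in X/A,\ u\neq v\}
\]
is a minimum over finitely many strictly positive numbers, hence $\varepsilon>0$. (If $X/A$ has only one point, the condition holds vacuously.) Thus $(X/A,d_1,[A])$ is uniformly discrete, contradicting \emph{(H2)}. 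Therefore $X\setminus A$ is infinite.

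The only point needing care is that $d_1$ is a genuine metric on $X/A$, i.e.\ distinct classes have positive distance. The paper asserts this in Section~\ref{sec:background}, and we use it. Alternatively, we can get this step for free: by Corollary~\ref{cor:input-output-discrete}, \emph{(H2)} says $(K(X,A),\rho)$ is not discrete, whereas a finite-rank $K(X,A)$ would be a metric space in which $0$ is isolated. That argument, however, requires more than finiteness of the group, so the direct argument above is cleaner.

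For the consequences, the proof of Theorem~\ref{thm:classification} gives $K(X,A)\cong\bigoplus_{x\in X\setminus A}\mathbb Z$. Since $X\setminus A$ is infinite, this free abelian group has infinite rank. For $B$, note that by Theorem~\ref{thm:BE-chain} the space $V(X,A)=V(X)/V(A)$ is the free real vector space on $X\setminus A$. Its basis vectors $e_x$, for $x\in X\setminus A$, are linearly independent in $V(X,A)$, and $V(X,A)$ embeds isometrically, hence injectively and linearly, into $B=\widehat V(X,A)$. So $B$ contains infinitely many linearly independent vectors and is infinite-dimensional.

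The main obstacle is only the bookkeeping that $V(X,A)\to\widehat V(X,A)$ is injective. This holds because $\|\cdot\|_{W_1}$ is a norm rather than merely a seminorm, which is part of the cited result of \cite{bubenik2022virtualpd} recorded in Theorem~\ref{thm:BE-chain}.
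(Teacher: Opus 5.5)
Your proof is correct and follows the paper's argument: if $X\setminus A$ were finite, then $(X/A,d_1)$ would be a finite metric space, hence uniformly discrete, contradicting \emph{(H2)}; and $K(X,A)\cong\bigoplus_{x\in X\setminus A}\mathbb Z$ then has infinite rank. Routing the last step through the linearly independent generators of $V(X,A)$ and the injective isometric linear inclusion $V(X,A)\hookrightarrow B$ only spells out the infinite-dimensionality of $B$, which the paper states without detail. Your aside about an alternative via Corollary~\ref{cor:input-output-discrete} is unnecessary and can be dropped.
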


\begin{proof}
If $X\setminus A$ were finite, then $X/A$ would be a finite metric space and therefore uniformly discrete, contradicting~\emph{(H2)}. Since $K(X,A)$ is the free abelian group on the set $X\setminus A$, it follows that $K(X,A)$ has infinite rank, and its Banach completion $B$ is infinite-dimensional.
\end{proof}

\begin{lemma}\label{lem:B-separable}
Under assumption~\emph{(H1)}, the Banach space $B$ is separable.
\end{lemma}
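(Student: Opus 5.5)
The plan is to produce an explicit countable set whose rational span is dense in $B$. I will work directly with the chain of isometric embeddings in Theorem~\ref{thm:BE-chain}, rather than invoking the general fact that Lipschitz-free spaces over separable metric spaces are separable. By construction, $V(X,A)$ is dense in $B=\widehat V(X,A)$. So it suffices to find a countable subset of $V(X,A)$ that is $\|\cdot\|_{W_1}$-dense in $V(X,A)$.

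By (H1), choose a countable set $S\subseteq X/A$ that is $d_1$-dense, and assume without loss of generality that $[A]\in S$. For $u\in X/A\setminus\{[A]\}$, write $e_u$ for the corresponding generator of $V(X,A)$, and set $e_{[A]}:=0$. Let $\mathcal D$ be the set of finite $\mathbb Q$-linear combinations of the vectors $e_s$ with $s\in S$. This set is countable.

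Fix $v=\sum_{i=1}^n c_i e_{u_i}\in V(X,A)$ with $c_i\in\mathbb R$, and fix $\varepsilon>0$. For each $i$, pick $s_i\in S$ with $d_1(u_i,s_i)<\varepsilon$, and pick $q_i\in\mathbb Q$ with $|c_i-q_i|<\varepsilon$. The triangle inequality for the norm gives
\[
\Bigl\|v-\sum_i q_i e_{s_i}\Bigr\|_{W_1}\le \sum_i |c_i|\,\|e_{u_i}-e_{s_i}\|_{W_1}+\sum_i |c_i-q_i|\,\|e_{s_i}\|_{W_1}.
\]
The key inputs are the singleton identities from the proof of Corollary~\ref{cor:input-output-discrete}:
\[
\|e_u-e_{s}\|_{W_1}=\rho(e_u,e_s)=W_1(e_u,e_s)=d_1(u,s),\qquad \|e_s\|_{W_1}=d_1(s,[A]).
\]
These transfer to the norm via the isometric embedding $K(X,A)\hookrightarrow V(X,A)$ and translation invariance. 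The same identities also cover the degenerate cases where $u_i$ or $s_i$ equals $[A]$. Both sums therefore tend to $0$ as $\varepsilon\downarrow 0$, because the $c_i$ and the distances $d_1(u_i,[A])$ are fixed finite quantities. Hence $\mathcal D$ is dense in $V(X,A)$, and so also in $B$.

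The main technical point is that the norm $\|\cdot\|_{W_1}$ on all of $V(X,A)$ is only known to us through Theorem~\ref{thm:BE-chain}, as a norm extending $\rho$. We therefore need it to be a genuine norm, so that it is homogeneous for real scalars and satisfies the triangle inequality. We also need its restriction to integer differences of singletons to match the singleton identities above. Both properties follow from the statement that $\|\cdot\|_{W_1}$ is a norm extending $\rho$. As an alternative route, one can invoke the isometric isomorphism $B\cong\mathcal F(X/A,d_1)$ together with the standard fact that the span of the point evaluations $\delta_x$ over a dense set of points is dense in the free space.
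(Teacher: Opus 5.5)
Your argument is correct, but it takes a different route from the paper. The paper's proof is a two-line citation. It invokes the isometric isomorphism $B\cong\mathcal F(X/A,d_1)$ from \cite[Corollary~7.10]{bubenik2022virtualpd}, then quotes the standard fact (\cite{weaver2018lipschitz}, Section~3.1) that Lipschitz-free spaces over separable pointed metric spaces are separable.

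You instead prove separability directly inside $V(X,A)$. You use only the isometric chain of Theorem~\ref{thm:BE-chain}, the singleton identities established in the proof of Corollary~\ref{cor:input-output-discrete}, and density of $V(X,A)$ in its completion. In effect you inline the standard proof of the cited free-space fact: rational combinations of point masses over a dense set of points are dense.

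Your route buys self-containment. It never uses the identification with $\mathcal F(X/A,d_1)$, it exhibits an explicit countable dense set, and it only needs the triangle inequality and homogeneity of $\|\cdot\|_{W_1}$ (definiteness plays no role). The paper's route buys brevity at the cost of two external results.

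The one step you should write out explicitly is the bound $\|e_{s_i}\|_{W_1}=d_1(s_i,[A])\le d_1(u_i,[A])+\varepsilon$, which is needed because the $s_i$ change with $\varepsilon$. With it, the total error is at most $\varepsilon\sum_i\bigl(|c_i|+d_1(u_i,[A])+\varepsilon\bigr)$.

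Your closing ``alternative route'' is exactly the paper's proof.
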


\begin{proof}
By~\cite[Corollary~7.10]{bubenik2022virtualpd}, the space $B$ is canonically isometric to the Lipschitz-free Banach space $\mathcal F(X/A,d_1)$. Lipschitz-free Banach spaces over separable pointed metric spaces are separable; see \cite{weaver2018lipschitz}, Section~3.1. The claim follows directly from~\emph{(H1)}.
\end{proof}

\begin{definition}\label{def:Lip-rho}
For a function $f\colon K(X,A)\to\mathbb R$, define
\[
\mathrm{Lip}_\rho(f)
:=
\sup_{\alpha\neq\beta}
\frac{|f(\alpha)-f(\beta)|}{\rho(\alpha,\beta)}\in[0,\infty].
\]
\end{definition}

For $g\in K(X,A)$, write $f(\cdot+g)$ for the translate $h\mapsto f(h+g)$. Since $\rho$ is translation invariant, one has $\mathrm{Lip}_\rho\bigl(f(\cdot+g)\bigr)=\mathrm{Lip}_\rho(f),$ for $g\in K(X,A).$ Define the canonical map from the pointed metric space $(X/A,d_1,[A])$ to $K(X,A)$ by $\iota\colon X/A\longrightarrow K(X,A),$ for $\iota([A])=0, \iota(u)=e_u, u\neq[A].$ Then $\rho(\iota(u),\iota(v))=d_1(u,v)$ for all $u,v\in X/A$. In particular, for all $u,v\in X/A$, $|f(\iota(u))-f(\iota(v))| \le \mathrm{Lip}_\rho(f)\,d_1(u,v),$ and hence $\mathrm{Lip}_{d_1}(f\circ \iota)\le \mathrm{Lip}_\rho(f).$ Moreover, if $f(0)=0$, then $f\circ \iota$ vanishes at the basepoint $[A]$.

\begin{definition}\label{def:mass}
For $g\in K(X,A)$, write $g=\sum_{u\in X/A\setminus\{[A]\}} n_u\,e_u$ with $n_u\in\mathbb Z$ and only finitely many nonzero coefficients. The \emph{mass} of $g$ is defined by
\[
\mathcal M(g):=\sum_{u\in X/A\setminus\{[A]\}} |n_u|\,d_1(u,[A]).
\]
For $\alpha\in D(X,A)$, this agrees with $\mathcal M(\alpha)=\sum_{x\in\alpha} d_1(x,[A])$.
\end{definition}

For $\alpha\in D(X,A)$, one has $W_1(\alpha,0)=\mathcal M(\alpha)$, since any admissible matching with the zero diagram pairs each point to the diagonal class $[A]$ and attains this cost. More generally, given $g=\sum_{u} n_u e_u$, define
\[
g_\pm:=\sum_{u} \max\{\pm n_u,0\}\,e_u\in D(X,A)
\]
so that $g=g_+-g_-$. The triangle inequality for $W_1$ then yields $\rho(g,0)=W_1(g_+,g_-)\le \mathcal M(g_+)+\mathcal M(g_-)=\mathcal M(g)$. Moreover,
\[
\mathcal M(g)
=
\inf\{\mathcal M(\alpha)+\mathcal M(\beta): \alpha,\beta\in D(X,A),\ g=\alpha-\beta\},
\]
and the infimum is attained by the canonical decomposition $g=g_+-g_-$.

\begin{definition}\label{def:covering-number}
For a subset $S\subset K(X,A)$ and $\varepsilon>0$, let $N(S,\varepsilon)\in\mathbb N\cup\{\infty\}$ denote the smallest integer $m$ such that $S$ can be covered by $m$ open $\rho$-balls of radius $\varepsilon$.
\end{definition}

Covering numbers are translation invariant in the sense that $N(S+g,\varepsilon)=N(S,\varepsilon),$ for $ g\in K(X,A).$ On singleton diagrams, covering numbers reduce to those of the underlying pointed metric space $(X/A,d_1,[A])$ via the identity $\rho(e_u,e_v)=d_1(u,v)$.

\subsection{Gaussian kernels induced by Hilbert embeddings}
\label{subsec:main-rkhs}

Identify the Banach space $B=\widehat V(X,A)$ canonically with the Lipschitz-free Banach space $\mathcal F(X/A,d_1)$. Via the dual isomorphism $B^\ast\cong \mathrm{Lip}_0(X/A,d_1)$ and the Kantorovich--Rubinstein dual representation of the norm, the closed unit ball of $B^\ast$ is weak-$\ast$ compact and metrizable. Fix a sequence $(\ell_n)_{n\ge1}\subset B^\ast$ with $\|\ell_n\|_{B^\ast}\le 1$ that is norming, meaning that for every $x\in B$ one has $\|x\|_{W_1}=\sup_{n\ge1}|\ell_n(x)|.$

A finite subset $S\subset X/A\setminus\{[A]\}$ together with a sign pattern $\sigma\colon S\to\{\pm1\}$ is called \emph{basepoint-separated} if $d_1(u,v)\ge d_1(u,[A])+d_1(v,[A])$ whenever $\sigma(u)\neq\sigma(v).$ For each basepoint-separated pair $(S,\sigma)$, define $v([A])=0$ and $v(u)=\sigma(u)\,d_1(u,[A])$ for $u\in S$, and let $f_{S,\sigma}(x):=\inf_{y\in S\cup\{[A]\}}\bigl(v(y)+d_1(x,y)\bigr) \in \mathrm{Lip}_0(X/A,d_1)$ denote the associated McShane extension~\cite{weaver2018lipschitz}. Then $\mathrm{Lip}_{d_1}(f_{S,\sigma})\le1$ and $f_{S,\sigma}(u)=\sigma(u)\,d_1(u,[A])$ for all $u\in S$. We assume that whenever a basepoint-separated pair $(S,\sigma)$ is used below, the corresponding witness $f_{S,\sigma}$ occurs among the coordinates $(\ell_n)_{n\ge1}$. Fix a strictly positive weight sequence $(w_n)_{n\ge1}\in\ell^2$. 

Define the bounded linear embedding $J\colon B\to \ell^2$ by $Jx := (w_n\,\ell_n(x))_{n\ge1},$ where $(w_n)_{n\ge1}\in\ell^2$ is a fixed strictly positive weight sequence and $(\ell_n)_{n\ge1}\subset B^\ast$ is a norming family with $\|\ell_n\|_{B^\ast}\le 1$. Fix a bounded, self-adjoint, positive, trace-class operator $\Sigma\in\mathcal L(\ell^2)$ that is diagonal with respect to the standard orthonormal basis $(e_n)_{n\ge1}$ of $\ell^2$, meaning that $\Sigma e_n=\sigma_n e_n$ for each $n\ge1$, where $\sigma_n\ge 0$ and $\sum_{n\ge1}\sigma_n<\infty$.

Let $\gamma_\Sigma$ denote the centered Gaussian Radon probability measure on $\ell^2$ with covariance operator $\Sigma$, characterized by the identity
\[
\int_{\ell^2} e^{i\langle h,u\rangle_{\ell^2}}\,d\gamma_\Sigma(u)
=
\exp\!\left(-\tfrac12\langle \Sigma h,h\rangle_{\ell^2}\right).
\]

For $u\sim\gamma_\Sigma$, define the random linear functional $\ell_u\in B^\ast$ by setting $\ell_u(x):=\langle Jx,u\rangle_{\ell^2}$ for $x\in B$. This defines a Gaussian random element of $B^\ast$ whose covariance operator $Q_{J,\Sigma}\colon B\to B^\ast$ is given by
\begin{align*}
\langle Q_{J,\Sigma}x,y\rangle_{B,B^\ast}
&=
\mathbb E\bigl[\ell_u(x)\,\ell_u(y)\bigr] \\
&=
\langle \Sigma Jx,Jy\rangle_{\ell^2},
\end{align*}
for all $x,y\in B$.

\begin{theorem}\label{thm:kJt-sigma}
Fix $t>0$ and let $u\sim\gamma_\Sigma$. Define, for $x,y\in B$,
\[
k_{J,\Sigma,t}(x,y)
:=
\mathbb E\!\left[\exp\!\bigl(i\sqrt t\,\langle J(x-y),u\rangle_{\ell^2}\bigr)\right].
\]
Then $k_{J,\Sigma,t}$ is a continuous, translation-invariant, positive definite kernel on $B$. Moreover, for all $x,y\in B$,
\[
k_{J,\Sigma,t}(x,y)
=
\exp\!\left(-\frac t2\,\|\Sigma^{1/2}J(x-y)\|_{\ell^2}^2\right).
\]
\end{theorem}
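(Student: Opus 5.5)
The proof reduces to the characteristic functional of $\gamma_\Sigma$, after which the remaining claims are quick checks. Fix $x,y\in B$ and set $h:=\sqrt t\,J(x-y)\in\ell^2$; this lies in $\ell^2$ because $J$ is bounded, with $\|Jz\|_{\ell^2}^2=\sum_n w_n^2|\ell_n(z)|^2\le \|(w_n)\|_{\ell^2}^2\|z\|_{W_1}^2$ since $\|\ell_n\|_{B^\ast}\le1$. Applying the defining identity of $\gamma_\Sigma$ to this $h$ gives
\[
k_{J,\Sigma,t}(x,y)=\exp\!\left(-\tfrac12\langle\Sigma h,h\rangle_{\ell^2}\right)=\exp\!\left(-\tfrac t2\langle \Sigma J(x-y),J(x-y)\rangle_{\ell^2}\right).
\]
Since $\Sigma$ is bounded, self-adjoint and positive, it has a positive square root $\Sigma^{1/2}$ (here simply the diagonal operator with entries $\sigma_n^{1/2}$). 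Hence $\langle\Sigma z,z\rangle_{\ell^2}=\|\Sigma^{1/2}z\|_{\ell^2}^2$, which is the closed form. The expectation is well defined because the integrand is bounded by $1$ and is measurable in $u$, as $u\mapsto\langle h,u\rangle_{\ell^2}$ is continuous.

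Translation invariance is immediate, since the kernel depends only on $x-y$. For continuity, the map $z\mapsto\|\Sigma^{1/2}Jz\|_{\ell^2}^2$ is continuous on $B$ as a composition of the bounded operators $J$ and $\Sigma^{1/2}$ with the squared norm. Consequently $(x,y)\mapsto k_{J,\Sigma,t}(x,y)$ is jointly continuous on $B\times B$.

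For positive definiteness I will use the integral representation rather than the closed form. Take $x_1,\dots,x_m\in B$ and $c_1,\dots,c_m\in\mathbb C$. By linearity of $J$,
\[
\sum_{j,k}c_j\overline{c_k}\,k_{J,\Sigma,t}(x_j,x_k)=\mathbb E\Bigl[\Bigl|\sum_j c_j e^{i\sqrt t\langle Jx_j,u\rangle_{\ell^2}}\Bigr|^2\Bigr]\ge0,
\]
since $e^{i\sqrt t\langle J(x_j-x_k),u\rangle}=e^{i\sqrt t\langle Jx_j,u\rangle}\,\overline{e^{i\sqrt t\langle Jx_k,u\rangle}}$ for real $\ell^2$. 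The closed form also shows that the kernel is real-valued and symmetric.

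There is no substantial obstacle. The only points to check are that $h\in\ell^2$, which gives the boundedness of $J$ (shown above), and that $\ell^2$ is taken to be real, so that the Fourier identity for $\gamma_\Sigma$ applies exactly as stated.
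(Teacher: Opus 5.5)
Your proposal is correct and follows the paper's proof essentially step for step: the closed form comes from the characteristic-functional identity of $\gamma_\Sigma$ applied to $\sqrt t\,J(x-y)$, continuity from the boundedness of $\Sigma^{1/2}J$, and positive definiteness from writing the quadratic form as $\mathbb E\bigl[\bigl|\sum_j c_j e^{i\sqrt t\langle Jx_j,u\rangle_{\ell^2}}\bigr|^2\bigr]\ge 0$. Your explicit check that $J$ is bounded (using $\|(w_n)\|_{\ell^2}$ and $\|\ell_n\|_{B^\ast}\le 1$), together with your remarks on measurability and on $\ell^2$ being real, makes explicit points that the paper leaves implicit.
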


\begin{proof}
Fix $x,y\in B$. Since $J(x-y)\in\ell^2$, the characteristic function identity for the centered Gaussian measure $\gamma_\Sigma$ applied to $\sqrt t\,J(x-y)$ gives
\begin{align*}
k_{J,\Sigma,t}(x,y)
&=
\int_{\ell^2}
\exp\!\bigl(i\langle \sqrt t\,J(x-y),u\rangle_{\ell^2}\bigr)\,d\gamma_\Sigma(u) \\
&=
\exp\!\left(-\frac t2\,\langle \Sigma J(x-y),J(x-y)\rangle_{\ell^2}\right) \\
&=
\exp\!\left(-\frac t2\,\|\Sigma^{1/2}J(x-y)\|_{\ell^2}^2\right),
\end{align*}
which yields the closed form.

Translation invariance is immediate from the definition. Continuity follows from the boundedness of $\Sigma^{1/2}J\colon B\to\ell^2$ and continuity of the exponential function.

To verify positive definiteness, let $x_1,\dots,x_m\in B$ and $c_1,\dots,c_m\in\mathbb C$. By Fubini's theorem,
\[
\sum_{j,k=1}^m c_j\overline{c_k}\,k_{J,\Sigma,t}(x_j,x_k)
=
\int_{\ell^2}
\Bigl|\sum_{j=1}^m c_j
\exp\!\bigl(i\sqrt t\,\langle Jx_j,u\rangle_{\ell^2}\bigr)\Bigr|^2
\,d\gamma_\Sigma(u)
\ge 0,
\]
which completes the proof.
\end{proof}

\begin{theorem}\label{thm:lipschitz-main-sigma}
For every $f\in H_{J,\Sigma,t}$,
\[
\mathrm{Lip}_{\rho}\bigl(f|_{K(X,A)}\bigr)
\le
\sqrt t\,
\Bigl(\sum_{n\ge1}\sigma_n w_n^2\Bigr)^{1/2}\,
\|f\|_{H_{J,\Sigma,t}} .
\]
\end{theorem}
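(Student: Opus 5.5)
The plan is to use the standard RKHS estimate $|f(x)-f(y)|\le\|f\|_{H}\,\|k(\cdot,x)-k(\cdot,y)\|_{H}$ and compute the feature distance in closed form. Here $H_{J,\Sigma,t}$ denotes the RKHS of the kernel $k:=k_{J,\Sigma,t}$ from Theorem~\ref{thm:kJt-sigma}. By the reproducing property and Cauchy--Schwarz, for $x,y\in B$,
\[
|f(x)-f(y)| = |\langle f, k(\cdot,x)-k(\cdot,y)\rangle_{H}| \le \|f\|_{H}\,\delta(x,y),
\]
where
\[
\delta(x,y)^2 = k(x,x)+k(y,y)-2\,\mathrm{Re}\,k(x,y) = 2\bigl(1-e^{-\frac t2 s(x-y)^2}\bigr)
\]
and $s(z):=\|\Sigma^{1/2}Jz\|_{\ell^2}$. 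The closed form of $k$ and the identity $k(x,x)=1$ both come from Theorem~\ref{thm:kJt-sigma}.

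Using $1-e^{-r}\le r$ for $r\ge0$, this gives $\delta(x,y)^2\le t\,s(x-y)^2$, so
\[
|f(x)-f(y)|\le\sqrt t\,\|f\|_H\,s(x-y).
\]

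Next I bound $s$ by the $W_1$ norm. Since $\Sigma$ is diagonal,
\[
s(z)^2=\sum_{n\ge1}\sigma_n w_n^2\,|\ell_n(z)|^2 .
\]
Since $\|\ell_n\|_{B^\ast}\le1$, we have $|\ell_n(z)|\le\|z\|_{W_1}$ for every $n$. Therefore
\[
s(z)\le\Bigl(\sum_{n\ge1}\sigma_n w_n^2\Bigr)^{1/2}\|z\|_{W_1}.
\]
The series converges because $\sigma_n$ is summable and $w_n^2\le\|w\|_{\ell^2}^2$ is bounded. This also shows $\Sigma^{1/2}J$ is bounded.

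Finally, I restrict to $K(X,A)$. For $\alpha,\beta\in K(X,A)$, the chain of isometric embeddings in Theorem~\ref{thm:BE-chain} gives $\|\alpha-\beta\|_{W_1}=\rho(\alpha,\beta)$. Combining the two bounds and dividing by $\rho(\alpha,\beta)$ for $\alpha\ne\beta$, then taking the supremum as in Definition~\ref{def:Lip-rho}, yields the claimed inequality.

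I expect no genuine obstacle, but two points need care.

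The first is the function class. If $H_{J,\Sigma,t}$ consists of complex-valued functions, the Lipschitz seminorm should be read with the modulus, which is what the argument above uses. For real $f$ the same bound holds, since the kernel here is real-valued.

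The second is the feature-distance identity. It requires the reproducing property applied to $k(\cdot,x)-k(\cdot,y)$ and the Hermitian symmetry $k(y,x)=\overline{k(x,y)}$; both are standard for a positive definite kernel.
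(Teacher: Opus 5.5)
Your proposal is correct and follows essentially the same route as the paper: bound $|f(\alpha)-f(\beta)|$ by $\|f\|_{H_{J,\Sigma,t}}$ times the feature distance, apply $1-e^{-r}\le r$ to the closed form from Theorem~\ref{thm:kJt-sigma}, bound the diagonal sum by $\bigl(\sum_{n}\sigma_n w_n^2\bigr)\|z\|_{W_1}^2$ using $\|\ell_n\|_{B^\ast}\le 1$, and transfer to $\rho$ through the isometric embedding $K(X,A)\hookrightarrow B$. Your justification that $\sum_n\sigma_n w_n^2<\infty$ (summable $\sigma_n$ and bounded $w_n$) is in fact slightly more explicit than the paper's.
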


\begin{proof}
Let $\jmath\colon K(X,A)\hookrightarrow B$ denote the canonical isometric embedding. For $x,y\in B$, the RKHS feature metric satisfies
\begin{align*}
\delta_{J,\Sigma,t}(x,y)^2
&=
\|k_{J,\Sigma,t}(\cdot,x)-k_{J,\Sigma,t}(\cdot,y)\|_{H_{J,\Sigma,t}}^2 \\
&=
k_{J,\Sigma,t}(x,x)+k_{J,\Sigma,t}(y,y)-2\,\Re k_{J,\Sigma,t}(x,y).
\end{align*}
Since $k_{J,\Sigma,t}(x,x)=1$ and $k_{J,\Sigma,t}(x,y)\in(0,1]$ is real, one has $\delta_{J,\Sigma,t}(x,y)^2 = 2-2k_{J,\Sigma,t}(x,y).$ Using the closed form $k_{J,\Sigma,t}(x,y) = \exp\!\left(-\frac t2\,\|\Sigma^{1/2}J(x-y)\|_{\ell^2}^2\right)$ gives
\begin{align*}
\delta_{J,\Sigma,t}(x,y)^2
&= 2\Bigl(1-\exp\!\bigl(-\tfrac t2\|\Sigma^{1/2}J(x-y)\|_{\ell^2}^2\bigr)\Bigr) \\
&\le t\,\|\Sigma^{1/2}J(x-y)\|_{\ell^2}^2 .
\end{align*}
By the reproducing property and Cauchy--Schwarz, for $\alpha,\beta\in K(X,A)$, $|f(\alpha)-f(\beta)| \le \|f\|_{H_{J,\Sigma,t}}\, \delta_{J,\Sigma,t}\bigl(\jmath(\alpha),\jmath(\beta)\bigr).$ Using the previous estimate with $x=\jmath(\alpha)$ and $y=\jmath(\beta)$ yields $|f(\alpha)-f(\beta)| \le \sqrt t\,\|f\|_{H_{J,\Sigma,t}}\, \|\Sigma^{1/2}J(\jmath(\alpha)-\jmath(\beta))\|_{\ell^2}.$

Since $Jx=(w_n\,\ell_n(x))_{n\ge1}$ and $\Sigma e_n=\sigma_n e_n$, one has for every $x\in B$, $\|\Sigma^{1/2}Jx\|_{\ell^2}^2 = \sum_{n\ge1}\sigma_n w_n^2\,\ell_n(x)^2.$ This series converges absolutely because $\|\ell_n\|_{B^\ast}\le 1$ implies $|\ell_n(x)|\le \|x\|_{W_1}$, and hence $\sum_{n\ge1}\sigma_n w_n^2\,\ell_n(x)^2 \le \Bigl(\sum_{n\ge1}\sigma_n w_n^2\Bigr)\,\|x\|_{W_1}^2.$ Therefore, $\|\Sigma^{1/2}Jx\|_{\ell^2} \le \Bigl(\sum_{n\ge1}\sigma_n w_n^2\Bigr)^{1/2}\,\|x\|_{W_1}.$

Applying this with $x=\jmath(\alpha)-\jmath(\beta)$ and using that $\jmath$ is isometric gives $\|\Sigma^{1/2}J(\jmath(\alpha)-\jmath(\beta))\|_{\ell^2} \le \Bigl(\sum_{n\ge1}\sigma_n w_n^2\Bigr)^{1/2}\,\rho(\alpha,\beta).$ Combining the previous inequalities yields
\[
|f(\alpha)-f(\beta)|
\le
\sqrt t\,
\Bigl(\sum_{n\ge1}\sigma_n w_n^2\Bigr)^{1/2}\,
\|f\|_{H_{J,\Sigma,t}}\,
\rho(\alpha,\beta).
\]
Taking the supremum over $\alpha\neq\beta$ proves the claim.
\end{proof}

Theorem~\ref{thm:lipschitz-main-sigma} implies a continuous, non-surjective embedding $H_{J,\Sigma,t}\hookrightarrow \mathrm{Lip}_\rho\bigl(K(X,A)\bigr).$ In particular, whenever a function or vectorization can be shown to lie in $H_{J,\Sigma,t}$, its $\rho$--Lipschitz continuity follows automatically, with an explicit upper bound on the Lipschitz constant determined by the kernel parameters.

In preparation for Theorem~\ref{thm:finite-max-certificate-slack}, we fix $\delta\in(0,1)$ and $t>0$, and let $k_{J,\Sigma,t}$ be the Gaussian kernel defined earlier. Let $g=\sum_{u\in S} n_u e_u\in K(X,A)\setminus\{0\}$, where $S\subseteq X/A\setminus\{[A]\}$ is finite, $n_u\in\mathbb Z\setminus\{0\}$, and the sign pattern $\sigma_g(u)=\mathrm{sign}(n_u)$ makes $(S,\sigma_g)$ basepoint-separated. 
For $\varepsilon\in(0,1)$, define the following auxiliary quantities:
\begin{align*}
T &:= S\cup\{[A]\},\\
f_0(u) &:= \sigma_g(u)\,d_1(u,[A]) \quad (u\in S), 
\qquad f_0([A]) := 0,\\
f_\varepsilon &:= (1-\varepsilon)f_0,\\
\Delta(S,g,\varepsilon)
&:= \frac12\min_{x\neq y\in T}
\Bigl(d_1(x,y)-(1-\varepsilon)|f_0(x)-f_0(y)|\Bigr),\\
\mathcal V(S,g,\varepsilon)
&:= \Bigl\{
v:T\to \Delta(S,g,\varepsilon)\mathbb Z :
v([A])=0,\\
&\hspace{2.8em}
|v(x)-v(y)|\le d_1(x,y)\ \ \forall x,y\in T
\Bigr\},\\
L(v)
&:= \sum_{x\in T\setminus\{[A]\}}
\Bigl(2+\bigl\lfloor\log_2\bigl(1+|v(x)|/\Delta(S,g,\varepsilon)\bigr)\bigr\rfloor\Bigr).
\end{align*}

\begin{theorem}\label{thm:finite-max-certificate-slack}
For every $\varepsilon\in(0,1)$,
\begin{equation}\label{eq:finite-max-slack}
\mathcal M(g)
\le
\frac{1}{1-\varepsilon}
\left(
\sqrt{
\frac{2}{t}\,
\frac{\log\!\bigl(1/k_{J,\Sigma,t}(g,0)\bigr)}
{\displaystyle
\max_{v\in\mathcal V(S,g,\varepsilon)} 2^{-(2+3\delta)L(v)}
}
}
+
\frac{\Delta(S,g,\varepsilon)}{2}\sum_{u\in S}|n_u|
\right).
\end{equation}
\end{theorem}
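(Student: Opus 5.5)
The plan is to convert the single kernel value into a lower bound on one well-chosen coordinate of $Jg$. That coordinate will be a grid-valued $1$-Lipschitz witness, and pairing it against $g$ recovers $\mathcal M(g)$ up to the rounding slack.

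\emph{Step 1: rewrite the kernel value.} By Theorem~\ref{thm:kJt-sigma} and the diagonal computation in the proof of Theorem~\ref{thm:lipschitz-main-sigma},
\[
\log\bigl(1/k_{J,\Sigma,t}(g,0)\bigr)=\tfrac t2\sum_{n\ge1}\sigma_n w_n^2\,\ell_n(g)^2 .
\]
Hence for every single index $n$ we have $\sigma_n w_n^2\,\ell_n(g)^2\le \tfrac 2t\log(1/k_{J,\Sigma,t}(g,0))$. It therefore suffices to exhibit one index $n$ with two properties: $\ell_n(g)\ge(1-\varepsilon)\mathcal M(g)-\tfrac{\Delta}{2}\sum_{u\in S}|n_u|$, and $\sigma_n w_n^2\ge 2^{-(2+3\delta)L(v)}$ for the relevant $v\in\mathcal V(S,g,\varepsilon)$. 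Dividing, taking square roots and rearranging then gives \eqref{eq:finite-max-slack}.

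For the weight property I would use a convention parallel to the one already imposed for the witnesses $f_{S,\sigma}$. For each $v\in\mathcal V(S,g,\varepsilon)$, the McShane extension $\tilde v$ of $v$ from $T$ occurs among the $\ell_n$. Its index is assigned through a prefix code of length about $L(v)$, so that $\sigma_n w_n^2\ge 2^{-(2+3\delta)L(v)}$. The form of $L(v)$, namely $2+\lfloor\log_2(1+|v(x)|/\Delta)\rfloor$ bits per point, is exactly an Elias-type code length for the integer $v(x)/\Delta$. The factor $2^{-3\delta L}$ is the summability slack that keeps $\sum_n\sigma_n w_n^2$ finite, as Kraft's inequality requires.

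\emph{Step 2: construct the grid witness.} Basepoint-separation of $(S,\sigma_g)$ gives $|f_0(x)-f_0(y)|\le d_1(x,y)$ on $T$. Combined with $\varepsilon>0$ and $d_1(x,y)>0$ for $x\neq y$, this gives $\Delta:=\Delta(S,g,\varepsilon)>0$. Set $v^\ast(x)$ to be the nearest point of $\Delta\mathbb Z$ to $f_\varepsilon(x)$, with $v^\ast([A])=0$. Then
\[
|v^\ast(x)-v^\ast(y)|\le(1-\varepsilon)|f_0(x)-f_0(y)|+\Delta\le d_1(x,y)
\]
by the choice $2\Delta\le d_1(x,y)-(1-\varepsilon)|f_0(x)-f_0(y)|$, so $v^\ast\in\mathcal V(S,g,\varepsilon)$. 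Its McShane extension is $1$-Lipschitz and vanishes at $[A]$, hence it is some $\ell_n$ with norm at most $1$. Since $n_uf_0(u)=|n_u|\,d_1(u,[A])$,
\[
\ell_n(g)=\sum_{u\in S}n_u v^\ast(u)\ge\sum_{u\in S}n_uf_\varepsilon(u)-\tfrac\Delta2\sum_{u\in S}|n_u|=(1-\varepsilon)\mathcal M(g)-\tfrac\Delta2\sum_{u\in S}|n_u| .
\]
If the right-hand side is negative, \eqref{eq:finite-max-slack} is trivial.

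\emph{Step 3 (main obstacle): pass from $v^\ast$ to the maximum.} The denominator in \eqref{eq:finite-max-slack} uses $\max_{v}2^{-(2+3\delta)L(v)}$, the weight of the cheapest $v$, whereas Step~2 directly yields only the weight of $v^\ast$. I would first try to show that $v^\ast$ can be chosen to minimize $L$ over $\mathcal V(S,g,\varepsilon)$. Rounding toward zero keeps $|v^\ast(x)|$ as small as possible near $f_\varepsilon(x)$, and $L$ is monotone in each $|v(x)|$. Failing that, I would optimize the witness inside $\mathcal V$ directly, trading rounding slack against code length.

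If neither works, the honest conclusion of the argument is the bound with $2^{-(2+3\delta)L(v^\ast)}$ in place of the maximum. I expect this reconciliation, together with making the coding convention on $(w_n,\sigma_n)$ explicit, to be the delicate point. The analytic part, Steps~1--2, is routine.
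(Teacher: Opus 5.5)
Your Steps 1 and 2 coincide with the paper's proof. It rounds $f_\varepsilon$ to $\Delta(S,g,\varepsilon)\mathbb Z$, checks membership in $\mathcal V(S,g,\varepsilon)$, McShane-extends, bounds the pairing with $g$ below by $(1-\varepsilon)\mathcal M(g)-\tfrac{\Delta}{2}\sum_{u\in S}|n_u|$, and gives each $\ell_v$ the code-length weight $\sigma(v)w(v)^2=2^{-(2+3\delta)L(v)}$. The obstacle you isolate in Step~3 is genuine, and the paper does not overcome it. After applying the one-coordinate inequality to $v_\Delta$, it simply writes $\max_{v}2^{-(2+3\delta)L(v)}$. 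But $2^{-(2+3\delta)L(v_\Delta)}\le\max_{v}2^{-(2+3\delta)L(v)}$ points the wrong way.

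Neither of your remedies can close this. Every summand of $L(v)$ is at least $2$, with equality if and only if $v(x)=0$. So the maximum is attained only at $v\equiv0$, where it equals $2^{-2(2+3\delta)|S|}$. The McShane extension of $0$ is $d_1(\cdot,T)$, which vanishes on $S$ and pairs to $0$ with $g$.

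In fact \eqref{eq:finite-max-slack} is false as stated. Take $S=\{u\}$, $g=e_u$, $d_1(u,[A])=1$. Then $\Delta=\varepsilon/2$, and the elements of $\mathcal V(S,g,\varepsilon)$ are the $v_m$ with $v_m(u)=m\varepsilon/2$, $|m|\le 2/\varepsilon$. For these, $\ell_{v_m}(g)=m\varepsilon/2$ and $L(v_m)=2+\lfloor\log_2(1+|m|)\rfloor$. Grant the most favorable setup: each $\ell_{v_m}$ is a coordinate with weight exactly $2^{-(2+3\delta)L(v_m)}$, and the remaining norming coordinates satisfy $\sum_n\sigma_nw_n^2\le\varepsilon\,2^{-2(2+3\delta)}$. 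Using $2^{-(2+3\delta)\lfloor\log_2(1+|m|)\rfloor}\le 4(1+|m|)^{-2}$ and $\|g\|_{W_1}=1$,
\[
\frac{2}{t}\,\frac{\log\bigl(1/k_{J,\Sigma,t}(g,0)\bigr)}{\max_{v}2^{-(2+3\delta)L(v)}}
\le\sum_{0<|m|\le 2/\varepsilon}\frac{m^2\varepsilon^2}{(1+|m|)^2}+\varepsilon\le 5\varepsilon .
\]
So the right-hand side of \eqref{eq:finite-max-slack} is at most $(\sqrt{5\varepsilon}+\varepsilon/4)/(1-\varepsilon)\approx 0.81$ at $\varepsilon=1/10$, while $\mathcal M(g)=1$.

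For a general admissible $(J,\Sigma)$ from Section~\ref{subsec:main-rkhs} it fails even more easily: replace $\Sigma$ by $c\Sigma$ with $c\downarrow 0$. So the weight convention you state explicitly is a genuine extra hypothesis, which the paper's proof also uses tacitly.

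What the argument does prove is your fallback: \eqref{eq:finite-max-slack} with $2^{-(2+3\delta)L(v_\Delta)}$ in the denominator. Hence it also holds with $\min_{v\in\mathcal V(S,g,\varepsilon)}$ in place of $\max$. Since $|v_\Delta(u)|\le d_1(u,[A])+\Delta/2$, you may use the explicit bound $L(v_\Delta)\le\sum_{u\in S}\bigl(2+\lfloor\log_2(\tfrac32+d_1(u,[A])/\Delta)\rfloor\bigr)$.

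One caution on your coding remark: the per-coordinate lengths $2+\lfloor\log_2(1+|m|)\rfloor$ are not prefix-code (Elias) lengths on $\mathbb Z$. Each block $2^k\le 1+|m|<2^{k+1}$ with $k\ge1$ contributes $1/2$ to the Kraft sum, so the Kraft normalization invoked in the paper can fail. You do not need it: $|v(x)|\le d_1(x,[A])$ makes $\mathcal V(S,g,\varepsilon)$ finite, so no summability issue arises.
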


\begin{proof}
By basepoint separation, $f_0$ is $1$-Lipschitz on $T$ and
\begin{align*}
\sum_{u\in S} n_u f_0(u)
&= \sum_{u\in S} |n_u|\, d_1(u,[A]) \\
&= \mathcal M(g).
\end{align*}
For distinct points $x,y\in T$, the function $f_\varepsilon=(1-\varepsilon)f_0$ satisfies
\begin{align*}
|f_\varepsilon(x)-f_\varepsilon(y)|
&\le (1-\varepsilon)\,d_1(x,y) \\
&< d_1(x,y).
\end{align*}
so $f_\varepsilon$ lies in the interior of the polytope of $1$-Lipschitz functions on $T$. By the definition of $\Delta(S,g,\varepsilon)$, any $v:T\to\mathbb R$ with $\|v-f_\varepsilon\|_{\ell^\infty(T)}\le \Delta(S,g,\varepsilon)$ is $1$-Lipschitz on $T$.

Define $v_\Delta:T\to\Delta(S,g,\varepsilon)\mathbb Z$ by rounding each value $f_\varepsilon(x)$ to the nearest multiple of $\Delta(S,g,\varepsilon)$, with $v_\Delta([A])=0$. Then $\|v_\Delta-f_\varepsilon\|_{\ell^\infty(T)}\le \tfrac12\Delta(S,g,\varepsilon),$ hence $v_\Delta\in\mathcal V(S,g,\varepsilon)$.

Moreover, $\sum_{u\in S}n_u v_\Delta(u) = \sum_{u\in S}n_u f_\varepsilon(u) + \sum_{u\in S}n_u\bigl(v_\Delta(u)-f_\varepsilon(u)\bigr),$ and therefore $\Bigl|\sum_{u\in S}n_u v_\Delta(u)\Bigr| \ge (1-\varepsilon)\mathcal M(g) - \tfrac12\Delta(S,g,\varepsilon)\sum_{u\in S}|n_u|.$

For each $v\in\mathcal V(S,g,\varepsilon)$ define $\ell_v\in B^\ast$ by the McShane extension $\ell_v(x):=\inf_{y\in T}\bigl(v(y)+d_1(x,y)\bigr).$ For every $x\in X/A$ this defines a $1$-Lipschitz function, and in particular $\|\ell_v\|_{B^\ast}\le 1$ and $\ell_v(u)=v(u)$ for all $u\in S$.

Fix a prefix-free binary encoding of each integer $m_x(v)$ and encode the vector $(m_x(v))_{x\in T\setminus\{[A]\}}$ by concatenation, yielding a prefix-free codeword $c(v)$ of length exactly $L(v)$. By Kraft's inequality, $\sum_{v\in\mathcal V(S,g,\varepsilon)} 2^{-L(v)} \le 1.$ Assign weights as
\begin{align*}
w(v) &:= 2^{-(\frac12+\delta)L(v)},\\
\sigma(v) &:= 2^{-(1+\delta)L(v)},\\
\sum_v w(v)^2
&= \sum_v 2^{-(1+2\delta)L(v)},\\
&\le \sum_v 2^{-L(v)} \le 1,\\
\sum_v \sigma(v)
&= \sum_v 2^{-(1+\delta)L(v)},\\
&\le \sum_v 2^{-L(v)} \le 1.
\end{align*}

By construction, $\sigma(v)w(v)^2 = 2^{-(2+3\delta)L(v)}.$ By the closed form of the kernel,
\begin{align*}
\log\!\bigl(1/k_{J,\Sigma,t}(g,0)\bigr)
&= \frac t2 \sum_v \sigma(v) w(v)^2\,|\ell_v(g)|^2 \\
&\ge \frac t2\, \sigma(v) w(v)^2
  \Bigl|\sum_{u\in S} n_u v(u)\Bigr|^2 .
\end{align*}
for every $v\in\mathcal V(S,g,\varepsilon)$. Applying this inequality to $v_\Delta$ and using the bound from Step~1 gives
\begin{align*}
\log\!\bigl(1/k_{J,\Sigma,t}(g,0)\bigr)
&\ge \frac t2
\left(
(1-\varepsilon)\mathcal M(g)
-
\tfrac12\Delta(S,g,\varepsilon)\sum_{u\in S}|n_u|
\right)^2 \\
&
\times \max_{v\in\mathcal V(S,g,\varepsilon)} 2^{-(2+3\delta)L(v)}.
\end{align*}
Rearranging this inequality gives~\eqref{eq:finite-max-slack}.
\end{proof}

Kernel values do not determine a unique underlying element of $K(X,A)$, even up to stability: large families of distinct virtual persistence diagrams are necessarily identified by noninjective, translation--invariant Gaussian kernels built from Hilbertian seminorms. Exact recovery from RKHS data is therefore impossible. Theorem~\ref{thm:finite-max-certificate-slack} provides a quantitative inverse bound: given a kernel value $k_{J,\Sigma,t}(g,0)$ together with finite support information, it gives an explicit upper bound on the diagrammatic mass $\mathcal M(g)$, a metric invariant defined directly in terms of $(X/A,d_1)$.

\begin{theorem}\label{thm:entropy-feature}
Fix $t>0$. For every nonempty subset $S\subseteq K(X,A)$ and every $\varepsilon>0$,
\[
N_{\delta_{J,\Sigma,t}}(S,\varepsilon)
\le
N_{\rho}\!\left(S,\frac{\varepsilon}{\sqrt t\,\|\Sigma^{1/2}J\|}\right).
\]
\end{theorem}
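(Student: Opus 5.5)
The plan is to show that the identity map $(S,\rho)\to(S,\delta_{J,\Sigma,t})$ is Lipschitz with constant $L:=\sqrt t\,\|\Sigma^{1/2}J\|$. Once that holds, any $\rho$-cover of radius $\varepsilon/L$ is automatically a $\delta_{J,\Sigma,t}$-cover of radius $\varepsilon$. Throughout, $\|\Sigma^{1/2}J\|$ denotes the operator norm of $\Sigma^{1/2}J\colon B\to\ell^2$. This norm is finite because, as computed in the proof of Theorem~\ref{thm:lipschitz-main-sigma}, it is at most $(\sum_n\sigma_nw_n^2)^{1/2}$. If $\|\Sigma^{1/2}J\|=0$, then $\delta_{J,\Sigma,t}\equiv 0$ on $B$, and $N_{\delta}(S,\varepsilon)=1\le N_\rho(S,\infty)$ under the convention that the radius is $+\infty$. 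I will treat this degenerate case separately and assume $L>0$ from then on.

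\textbf{Key steps.} First, recall from the proof of Theorem~\ref{thm:lipschitz-main-sigma} that for $x,y\in B$,
\[
\delta_{J,\Sigma,t}(x,y)^2=2\bigl(1-e^{-\frac t2\|\Sigma^{1/2}J(x-y)\|^2}\bigr)\le t\,\|\Sigma^{1/2}J(x-y)\|_{\ell^2}^2,
\]
using $1-e^{-s}\le s$. Second, bound $\|\Sigma^{1/2}J(x-y)\|_{\ell^2}\le\|\Sigma^{1/2}J\|\,\|x-y\|_{W_1}$. Third, for $\alpha,\beta\in K(X,A)$, use the isometric embedding $\jmath$ of Theorem~\ref{thm:BE-chain}, which gives $\|\jmath(\alpha)-\jmath(\beta)\|_{W_1}=\rho(\alpha,\beta)$. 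Combining the three steps yields $\delta_{J,\Sigma,t}(\alpha,\beta)\le L\,\rho(\alpha,\beta)$.

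Fourth, carry out the covering argument. If $N_\rho(S,\varepsilon/L)=\infty$, there is nothing to prove. Otherwise, choose centers $c_1,\dots,c_m$ with $m=N_\rho(S,\varepsilon/L)$ such that the open $\rho$-balls $B_\rho(c_i,\varepsilon/L)$ cover $S$. For $s\in S$ with $\rho(s,c_i)<\varepsilon/L$ we get $\delta_{J,\Sigma,t}(s,c_i)<\varepsilon$. Hence the open $\delta$-balls with the same centers cover $S$, which gives $N_\delta(S,\varepsilon)\le m$.

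\textbf{Main obstacle.} The only delicate point is a convention: whether the centers of a cover must lie in $S$ or only in $K(X,A)$. Definition~\ref{def:covering-number} only says "open $\rho$-balls", so the centers lie in $K(X,A)$. I will take the $\delta$-covering number in the same sense, with centers in $K(X,A)$ viewed inside $B$ via $\jmath$. The Lipschitz bound holds for all pairs in $K(X,A)$, not just pairs in $S$, so the argument goes through with either convention as long as the two numbers are defined the same way. Apart from this, the strict inequality is preserved because $L>0$ is a constant factor, so the open balls map into open balls.
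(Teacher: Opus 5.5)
Your proposal is correct and follows the paper's proof essentially step for step. You derive $\delta_{J,\Sigma,t}(x,y)\le\sqrt t\,\|\Sigma^{1/2}J\|\,\rho(x,y)$ from $1-e^{-s}\le s$, the operator-norm estimate and the isometry $\|x-y\|_{W_1}=\rho(x,y)$, then reuse the same centers for the $\delta_{J,\Sigma,t}$-cover; your separate handling of the degenerate case $\Sigma^{1/2}J=0$ (where the stated radius divides by zero) and your remark on where the centers live are sensible additions the paper leaves implicit.
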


\begin{proof}
Fix $x,y\in K(X,A)$ and set $h:=x-y$. As in the proof of
Theorem~\ref{thm:lipschitz-main-sigma},
\begin{align*}
\delta_{J,\Sigma,t}(x,y)^2
&=
2\Bigl(1-\exp\!\bigl(-\tfrac t2\|\Sigma^{1/2}Jh\|_{\ell^2}^2\bigr)\Bigr) \\
&\le
t\,\|\Sigma^{1/2}Jh\|_{\ell^2}^2.
\end{align*}
Since $\|\Sigma^{1/2}Jh\|_{\ell^2}\le \|\Sigma^{1/2}J\|\,\|h\|_{W_1}$ and
$\|h\|_{W_1}=\rho(x,y)$ on $K(X,A)$, it follows that $\delta_{J,\Sigma,t}(x,y) \le \sqrt t\,\|\Sigma^{1/2}J\|\,\rho(x,y).$ Set $\eta:=\varepsilon/(\sqrt t\,\|\Sigma^{1/2}J\|)$. If $S$ is covered by $\rho$-balls of radius $\eta$, then the preceding inequality implies that each such $\rho$-ball is contained in a $\delta_{J,\Sigma,t}$-ball of radius $\varepsilon$. Therefore, the same centers give a $\delta_{J,\Sigma,t}$-cover of $S$ by $\varepsilon$-balls, and taking minimal cardinalities yields the stated inequality.
\end{proof}

Covering numbers with respect to the feature metric $\delta_{J,\Sigma,t}$ are controlled by those induced by the Grothendieck metric $\rho$. For every subset $S\subseteq K(X,A)$ and every $\varepsilon>0$, $N_{\delta_{J,\Sigma,t}}(S,\varepsilon) \le N_{\rho}\!\left(S,\varepsilon/(\sqrt t\,\|\Sigma^{1/2}J\|)\right)$. The same inequality implies a one-sided inverse bound when only kernel-side distances are available: for any finite $S\subset K(X,A)$ and any $\varepsilon>0$, it follows that $N_{\rho}\!\left(S,\varepsilon/(\sqrt t\, \|\Sigma^{1/2}J\|)\right) \ge N_{\delta_{J,\Sigma,t}}(S,\varepsilon)$. Thus, although the kernel embedding is noninjective and does not determine $S$ uniquely, covering numbers computed from $\delta_{J,\Sigma,t}$ provide lower bounds on the covering numbers of $S$ with respect to $\rho$.

\subsection{Comparison of covariance structures}
\label{subsec:main-comparison}

Fix $t>0$. By Theorem~\ref{thm:kJt-sigma}, every Gaussian kernel constructed in Section~\ref{subsec:main-rkhs} is completely determined by the associated Hilbertian seminorm $s_{J,\Sigma}\colon B\to[0,\infty)$, defined for $x\in B$ by $s_{J,\Sigma}(x):=\|\Sigma^{1/2}Jx\|_{\ell^2}.$ In terms of this seminorm, the kernel admits the translation-invariant representation $k_{J,\Sigma,t}(x,y) = \exp\!\left(-\frac t2\,s_{J,\Sigma}(x-y)^2\right),$ for $x,y \in B,$ and the induced RKHS feature metric satisfies the exact scalar identity
\[
\delta_{J,\Sigma,t}(x,y) = \phi_t\bigl(s_{J,\Sigma}(x-y)\bigr),
\]
\[
\phi_t(r)=\sqrt{2\left(1-\exp\!\left(-\frac t2 r^2\right)\right)},
\]
for $r \ge 0.$

Conversely, let $s\colon B\to[0,\infty)$ be any continuous Hilbertian seminorm. Then there exists a unique bounded symmetric positive operator $Q\colon B\to B^\ast$ such that $s(x)^2=\langle Qx,x\rangle_{B,B^\ast}.$ Equivalently, the bilinear form $\langle x,y\rangle_Q:=\langle Qx,y\rangle_{B,B^\ast}$ defines a continuous semi-inner product on $B$. Modding out the null space $\{x\in B:\langle Qx,x\rangle_Q=0\}$ and completing yields a Hilbert space $H_Q$, unique up to isometry, together with a bounded linear map $T_Q\colon B\to H_Q$ satisfying $Q=T_Q^\ast T_Q$ and $s(x)=\|T_Qx\|_{H_Q},$ for $x\in B$. Choosing an isometric embedding of $H_Q$ into $\ell^2$ then yields a representation of the form $Q=J^\ast\Sigma J$ for suitable operators $(J,\Sigma)$ as in Section~\ref{subsec:main-rkhs}.

In particular, the parametrization by pairs $(J,\Sigma)$ is universal: the family of kernels $k_{J,\Sigma,t}$ coincides exactly with the class of all Gaussian, translation-invariant kernels on $B$ generated by continuous Hilbertian seminorms. Different choices of $(J,\Sigma)$ give rise to the same kernel if and only if they induce the same Hilbertian seminorm on $B$.

If $T$ is injective on $B$--equivalently, if the Hilbertian seminorm $s_{J,\Sigma}$ is a norm on $B$, for example when all diagonal entries $\sigma_n>0$ on $\overline{\mathrm{ran}(J)}$--then $k_{J,\Sigma,t}$ is \emph{characteristic} on the space of Borel probability measures on $B$. Indeed, in this case, $T$ is a Borel isomorphism from $B$ onto its image, and the kernel coincides there with the standard Gaussian kernel on a separable Hilbert space, which is known to be characteristic. Conversely, for an arbitrary choice of $\Sigma$, the kernel $k_{J,\Sigma,t}$ is always characteristic on the quotient $B/\ker s_{J,\Sigma}$: it distinguishes probability measures up to collapsing precisely those directions on which $T$ vanishes. Thus $k_{J,\Sigma,t}$ is characteristic on $B$ if and only if $\ker s_{J,\Sigma}=\{0\}$.

An entirely analogous dichotomy holds for universality. If $T$ is injective on $B$ (equivalently, $s_{J,\Sigma}$ is a norm), then for every compact subset $K\subset B$ the restriction of $k_{J,\Sigma,t}$ to $K\times K$ is \emph{universal}: its reproducing kernel Hilbert space is dense in $C(K)$ with respect to the uniform norm. This follows from the universality of the Gaussian kernel on Hilbert spaces together with the fact that $T$ restricts to a homeomorphism from $K$ onto the compact set $T(K)$. In general, for an arbitrary $\Sigma$, the kernel remains universal on compact subsets of $B$ modulo the nullspace of $T$: on any compact $K\subset B$, it can approximate exactly those continuous functions that are constant along cosets of $\ker T\cap K$.

\begin{theorem}\label{thm:seminorm-bilipschitz-iff}
Let $(J_1,\Sigma_1)$ and $(J_2,\Sigma_2)$ be admissible choices as in
Section~\ref{subsec:main-rkhs}, and let
$Q_i:=Q_{J_i,\Sigma_i}\colon B\to B^\ast$ be the associated covariance operators. Define
Hilbertian seminorms $s_i(x):=\langle Q_i x,x\rangle_{B,B^\ast}^{1/2}$ for $x\in B,\ i\in\{1,2\}.$ Then
\[
\ker(Q_1)=\ker(Q_2),
\]
\[
0<
\inf_{\langle Q_1x,x\rangle>0}
\frac{\langle Q_2x,x\rangle}{\langle Q_1x,x\rangle}
\le
\sup_{\langle Q_1x,x\rangle>0}
\frac{\langle Q_2x,x\rangle}{\langle Q_1x,x\rangle}
<\infty,
\]
if and only if
\[
s_2(x)\le
\left(
\sup_{\langle Q_1y,y\rangle>0}
\frac{\langle Q_2y,y\rangle}{\langle Q_1y,y\rangle}
\right)^{1/2}
s_1(x),
\]
\[
s_2(x)\ge
\left(
\inf_{\langle Q_1y,y\rangle>0}
\frac{\langle Q_2y,y\rangle}{\langle Q_1y,y\rangle}
\right)^{1/2}
s_1(x),
\]
for all $x\in B$.
\end{theorem}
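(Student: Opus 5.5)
Write $m:=\inf_{\langle Q_1x,x\rangle>0}\langle Q_2x,x\rangle/\langle Q_1x,x\rangle$ and $M:=\sup_{\langle Q_1x,x\rangle>0}\langle Q_2x,x\rangle/\langle Q_1x,x\rangle$, so that $s_i(x)^2=\langle Q_ix,x\rangle_{B,B^\ast}$. The whole argument is an elementary manipulation of the Rayleigh quotient. The one structural input is the identification $\ker(Q_i)=\{x\in B: s_i(x)=0\}$. I would establish it first from the factorization $\langle Q_ix,y\rangle=\langle \Sigma_i J_ix,J_iy\rangle_{\ell^2}=\langle \Sigma_i^{1/2}J_ix,\Sigma_i^{1/2}J_iy\rangle_{\ell^2}$ recorded before Theorem~\ref{thm:kJt-sigma}. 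If $s_i(x)=0$, then $\Sigma_i^{1/2}J_ix=0$, so $\langle Q_ix,y\rangle=0$ for all $y$, i.e.\ $Q_ix=0$. The converse inclusion is trivial. Hence $\ker Q_i$ is exactly the null space of $s_i$. This step I expect to be the only one needing any care, since a positive symmetric operator $B\to B^\ast$ need not satisfy this without the Hilbert factorization; here it comes from Cauchy--Schwarz for the semi-inner product $\langle\cdot,\cdot\rangle_{Q_i}$.

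For the forward direction, assume $\ker Q_1=\ker Q_2$ and $0<m\le M<\infty$, and fix $x\in B$. If $s_1(x)>0$, then by the definition of $m$ and $M$,
\[
m\,s_1(x)^2\le s_2(x)^2\le M\,s_1(x)^2 .
\]
Taking square roots gives both inequalities. If $s_1(x)=0$, then $x\in\ker Q_1=\ker Q_2$, so $s_2(x)=0$ and both inequalities hold trivially as $0\le 0$ and $0\ge 0$.

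For the converse, assume both displayed inequalities hold for all $x$. Implicitly the constants must be finite, so I would take as part of the hypothesis that the square roots of $M$ and $m$ are meaningful real numbers: $M<\infty$, and the lower bound is understood with $m>0$. I would remark that the statement should be read as asserting two-sided comparability with these specific constants. Given $m>0$, if $s_1(x)=0$ the upper inequality forces $s_2(x)=0$. If $s_2(x)=0$, the lower inequality gives $\sqrt m\,s_1(x)\le 0$, hence $s_1(x)=0$. So the null spaces of $s_1$ and $s_2$ coincide, and by the first step $\ker Q_1=\ker Q_2$. The chain $0<m\le M<\infty$ then holds, where $m\le M$ requires the index set to be nonempty; this follows from $Q_1\neq0$, which I would assume (or treat $Q_1=0$ separately as the degenerate case where both sides are vacuous).

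The main obstacle is really interpretive rather than technical. In the converse, the inequalities with constants $\sqrt m,\sqrt M$ do not by themselves force $m>0$ or $M<\infty$, since $\sqrt M=\infty$ makes the upper bound vacuous and $m=0$ makes the lower bound vacuous. I would therefore state explicitly that the equivalence is between the kernel and Rayleigh conditions and the two-sided bound with finite positive constants. I would also note, as a supplementary check, that any two-sided bound $c\,s_1\le s_2\le C\,s_1$ with $0<c\le C<\infty$ forces $c^2\le m$ and $M\le C^2$, so the displayed constants are the optimal ones.
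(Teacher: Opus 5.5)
Your proof follows the paper's argument. The forward direction uses the same case split on $s_1(x)>0$ versus $s_1(x)=0$, and the converse uses the same transfer of null spaces via $M<\infty$ and $m>0$. At that point the paper also simply asserts ``by hypothesis, $\alpha>0$ and $\beta<\infty$''. Your caveat is therefore well founded: for instance, $\Sigma_2=0$ with $Q_1\neq0$ satisfies both displayed inequalities while $m=0$ and $\ker Q_2=B\neq\ker Q_1$, so your reading makes explicit an assumption the paper's converse needs but never states.

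Your first step supplies the identification $\ker Q_i=\{x:s_i(x)=0\}$, which the paper uses without comment. Your aside about it is mistaken, though. Symmetry and positivity of $Q_i$ already give this by Cauchy--Schwarz for the form $\langle Q_i\cdot,\cdot\rangle_{B,B^\ast}$, so the factorization through $\Sigma_i^{1/2}J_i$ is convenient rather than necessary.
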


\begin{proof}
If $\{x\in B:\langle Q_1x,x\rangle_{B,B^\ast}>0\}=\varnothing$, then $Q_1=0$ and hence $\ker(Q_1)=B$. In this case, the displayed kernel identity forces $\ker(Q_2)=B$, so $Q_2=0$ and $s_1\equiv s_2\equiv 0$. Both directions of the claimed equivalence are then immediate. We therefore assume that there exists $x\in B$ with $\langle Q_1x,x\rangle_{B,B^\ast}>0$.

First assume that
\[
\ker(Q_1)=\ker(Q_2),
\]
\[
0<
\inf_{\langle Q_1x,x\rangle>0}
\frac{\langle Q_2x,x\rangle}{\langle Q_1x,x\rangle}
\le
\sup_{\langle Q_1x,x\rangle>0}
\frac{\langle Q_2x,x\rangle}{\langle Q_1x,x\rangle}
<\infty.
\]
Define
\[
\alpha:=
\inf_{\langle Q_1x,x\rangle>0}
\frac{\langle Q_2x,x\rangle}{\langle Q_1x,x\rangle},
\]
\[
\beta:=
\sup_{\langle Q_1x,x\rangle>0}
\frac{\langle Q_2x,x\rangle}{\langle Q_1x,x\rangle},
\]
so that $0<\alpha\le \beta<\infty$. Fix $x\in B$. If $\langle Q_1x,x\rangle>0$, then by the definition of $\alpha$ and $\beta$, $\alpha \le \frac{\langle Q_2x,x\rangle}{\langle Q_1x,x\rangle} \le \beta,$ and hence
\[
\alpha\,\langle Q_1x,x\rangle\le \langle Q_2x,x\rangle\le \beta\,\langle Q_1x,x\rangle.
\]
Taking square roots and using $s_i(x)^2=\langle Q_i x,x\rangle$ yields
\[
s_2(x)\ge \sqrt{\alpha}\,s_1(x),
\]
\[
s_2(x)\le \sqrt{\beta}\,s_1(x).
\]
If instead $\langle Q_1x,x\rangle=0$, then $x\in\ker(Q_1)=\ker(Q_2)$, so $\langle Q_2x,x\rangle=0$ and $s_1(x)=s_2(x)=0$, and the same inequalities hold trivially. This proves the two displayed seminorm bounds for all $x\in B$.

Conversely, assume that for all $x\in B$,
\[
s_2(x)\le
\left(
\sup_{\langle Q_1y,y\rangle>0}
\frac{\langle Q_2y,y\rangle}{\langle Q_1y,y\rangle}
\right)^{1/2}
s_1(x),
\]
\[
s_2(x)\ge
\left(
\inf_{\langle Q_1y,y\rangle>0}
\frac{\langle Q_2y,y\rangle}{\langle Q_1y,y\rangle}
\right)^{1/2}
s_1(x).
\]
Let $\alpha$ and $\beta$ be defined as above. By hypothesis, $\alpha>0$ and $\beta<\infty$. We first prove $\ker(Q_1)=\ker(Q_2)$. If $x\in\ker(Q_1)$, then $s_1(x)=0$ and the upper inequality gives $s_2(x)\le 0$, hence $s_2(x)=0$ and $x\in\ker(Q_2)$. Conversely, if $x\in\ker(Q_2)$, then $s_2(x)=0$ and the lower inequality gives $0\ge \sqrt{\alpha}\,s_1(x)$. Since $\alpha>0$ and $s_1(x)\ge 0$, it follows that $s_1(x)=0$, so $x\in\ker(Q_1)$. Thus $\ker(Q_1)=\ker(Q_2)$.

To obtain the Rayleigh bounds~\cite{10.5555/280490}, fix $x\in B$ with $\langle Q_1x,x\rangle>0$, equivalently $s_1(x)>0$. Dividing the two seminorm inequalities by $s_1(x)$ and squaring yields
\[
\alpha
\le
\frac{s_2(x)^2}{s_1(x)^2}
=
\frac{\langle Q_2x,x\rangle}{\langle Q_1x,x\rangle}
\le
\beta.
\]
Taking the infimum and supremum over $\{x:\langle Q_1x,x\rangle>0\}$ gives exactly the displayed Rayleigh bounds~\cite{10.5555/280490}. This completes the proof.
\end{proof}

\subsection{Random Fourier features}
\label{subsec:main-rff}

Fix $t>0$ and retain the canonical embedding $J\colon B\to\ell^2$ and diagonal trace--class covariance operator $\Sigma$ as in Section~\ref{subsec:main-rkhs}. Let $u_1,\dots,u_R$ be independent samples from the centered Gaussian Radon probability measure $\gamma_\Sigma$ on $\ell^2$. Define
\[
\widehat k_R(x,y)
:=
\frac1R\sum_{r=1}^R
\exp\!\bigl(i\sqrt t\,\langle J(x-y),u_r\rangle_{\ell^2}\bigr),
\]
for $x,y\in B,$ and the random feature map
\[
\Phi_R(x)
:=
\frac1{\sqrt R}
\bigl(
e^{i\sqrt t\,\langle Jx,u_1\rangle_{\ell^2}},
\dots,
e^{i\sqrt t\,\langle Jx,u_R\rangle_{\ell^2}}
\bigr)
\in\mathbb C^R .
\]

\begin{lemma}\label{lem:rff-uniform}
Let $S\subset B$ be finite and let $\hat\varepsilon>0$. Then
\[
\mathbb P\!\left(
\max_{x,y\in S}
\bigl|\widehat k_R(x,y)-k_{J,\Sigma,t}(x,y)\bigr|>\hat\varepsilon
\right)
\le
4|S|^2\exp\!\left(-\frac{R\hat\varepsilon^2}{4}\right).
\]
\end{lemma}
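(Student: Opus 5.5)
Fix a pair $(x,y)\in S\times S$ and write $h:=x-y$. For each sample set
\[
Z_r:=\exp\!\bigl(i\sqrt t\,\langle Jh,u_r\rangle_{\ell^2}\bigr).
\]
The $Z_1,\dots,Z_R$ are i.i.d., satisfy $|Z_r|=1$, and by Theorem~\ref{thm:kJt-sigma} have mean $\mathbb E[Z_r]=k_{J,\Sigma,t}(x,y)$. The quantity $\widehat k_R(x,y)$ is their empirical mean. The plan is to prove a pointwise concentration bound for one pair and then take a union bound over the at most $|S|^2$ ordered pairs.

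\textbf{Pointwise bound.} First split into real and imaginary parts. Because $\gamma_\Sigma$ is centered and symmetric, $\mathbb E[\sin(\sqrt t\langle Jh,u\rangle)]=0$. Hence $\Re Z_r$ has mean $k_{J,\Sigma,t}(x,y)$ and $\Im Z_r$ has mean $0$, and both take values in $[-1,1]$. If $|\widehat k_R-k|>\hat\varepsilon$, then either the real part or the imaginary part deviates from its mean by more than $\hat\varepsilon/\sqrt2$. Apply Hoeffding's inequality to each part, with ranges of length $2$ and threshold $\hat\varepsilon/\sqrt2$:
\[
2\exp\!\Bigl(-\frac{2R(\hat\varepsilon/\sqrt2)^2}{4}\Bigr)=2\exp\!\Bigl(-\frac{R\hat\varepsilon^2}{4}\Bigr).
\]
Summing the two parts gives $4\exp(-R\hat\varepsilon^2/4)$ for the fixed pair.

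\textbf{Union bound.} Pairs with $x=y$ contribute nothing, since there $\widehat k_R=k=1$. Summing the pointwise bound over all $|S|^2$ ordered pairs gives the stated bound $4|S|^2\exp(-R\hat\varepsilon^2/4)$. One could halve the count using the symmetry $\widehat k_R(y,x)=\overline{\widehat k_R(x,y)}$, but this is not needed.

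The only delicate step is the constant bookkeeping: the $\sqrt2$ split between real and imaginary parts combined with Hoeffding's range factor must produce exactly the exponent $R\hat\varepsilon^2/4$. The computation above confirms that it does. Measurability of $u\mapsto\langle Jh,u\rangle$ is immediate, since $Jh\in\ell^2$ makes this a continuous linear functional on $\ell^2$. Independence of the $Z_r$ follows from the independence of the $u_r$.
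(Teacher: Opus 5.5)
Your proposal is correct and follows the same route as the paper: Hoeffding's inequality applied to the real and imaginary parts (each in $[-1,1]$) for a fixed pair, followed by a union bound over the $|S|^2$ pairs in $S\times S$. You also make explicit the constant that the paper leaves implicit: splitting the threshold as $\hat\varepsilon/\sqrt2$ between the two parts, each with range length $2$, gives $2\exp(-R\hat\varepsilon^2/4)$ per part and hence the factor $4$.
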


\begin{proof}
For fixed $x,y\in B$, the random variables $\exp(i\sqrt t\,\langle J(x-y),u_r\rangle_{\ell^2})$ have real and imaginary parts in $[-1,1]$ and expectation $k_{J,\Sigma,t}(x,y)$. Hoeffding's inequality yields $\mathbb P\bigl(|\widehat k_R(x,y)-k_{J,\Sigma,t}(x,y)|>\hat\varepsilon\bigr) \le 4\exp\!\left(-\frac{R\hat\varepsilon^2}{4}\right).$ A union bound over all $(x,y)\in S\times S$ gives the claim.
\end{proof}

\begin{corollary}\label{cor:rff-lipschitz}
The random feature map $\Phi_R\colon (B,\|\cdot\|_{W_1})\longrightarrow(\mathbb C^R,\|\cdot\|_{\ell^2})$ is Lipschitz, with Lipschitz constant satisfying
\[
\mathrm{Lip}(\Phi_R)
\le
\sqrt t\,\|J\|\,
\left(
\frac1R\sum_{r=1}^R\|u_r\|_{\ell^2}^2
\right)^{1/2}.
\]
\end{corollary}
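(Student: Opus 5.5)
The plan is to bound the difference $\Phi_R(x)-\Phi_R(y)$ coordinatewise. Each coordinate $r$ changes by $R^{-1/2}\bigl(e^{i\theta_r(x)}-e^{i\theta_r(y)}\bigr)$, where $\theta_r(x):=\sqrt t\,\langle Jx,u_r\rangle_{\ell^2}$. We then sum the squares and use linearity of $J$ together with Cauchy--Schwarz in $\ell^2$. The argument is pathwise: it holds for every realization of $u_1,\dots,u_R$, so no probabilistic input beyond $u_r\in\ell^2$ is needed.

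\textbf{Step 1 (scalar bound).} For real $a,b$ we have $|e^{ia}-e^{ib}|=2|\sin((a-b)/2)|\le|a-b|$, since $|\sin s|\le|s|$. Applying this with $a=\theta_r(x)$ and $b=\theta_r(y)$, and using linearity of $J$, gives
\[
\bigl|e^{i\theta_r(x)}-e^{i\theta_r(y)}\bigr|\le \sqrt t\,\bigl|\langle J(x-y),u_r\rangle_{\ell^2}\bigr|.
\]

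\textbf{Step 2 (Cauchy--Schwarz).} We have $|\langle J(x-y),u_r\rangle_{\ell^2}|\le \|J(x-y)\|_{\ell^2}\|u_r\|_{\ell^2}\le \|J\|\,\|x-y\|_{W_1}\,\|u_r\|_{\ell^2}$. Here $\|J\|$ is finite: since $|\ell_n(x)|\le\|x\|_{W_1}$, we get $\|Jx\|_{\ell^2}^2\le \|w\|_{\ell^2}^2\|x\|_{W_1}^2$, so $\|J\|\le\|w\|_{\ell^2}$.

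\textbf{Step 3 (summation).} Summing the squared coordinate bounds gives
\[
\|\Phi_R(x)-\Phi_R(y)\|_{\ell^2}^2=\frac1R\sum_{r=1}^R\bigl|e^{i\theta_r(x)}-e^{i\theta_r(y)}\bigr|^2\le t\,\|J\|^2\,\|x-y\|_{W_1}^2\,\frac1R\sum_{r=1}^R\|u_r\|_{\ell^2}^2 .
\]
Taking square roots and the supremum over $x\neq y$ yields the stated bound on $\mathrm{Lip}(\Phi_R)$.

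\textbf{Main obstacle.} The only delicate point is that $u_r$ must lie in $\ell^2$ almost surely, so that the right-hand side is finite. This holds because $\gamma_\Sigma$ is a Radon Gaussian measure on $\ell^2$ with trace-class covariance, which gives $\mathbb E\|u_r\|^2=\sum_n\sigma_n<\infty$. Granting this, the argument is elementary.
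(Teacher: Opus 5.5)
Your proof is correct and follows the paper's argument: the same scalar inequality $|e^{ia}-e^{ib}|\le|a-b|$, squared and summed over the $R$ coordinates. The only cosmetic difference is that you bound $|\langle J(x-y),u_r\rangle_{\ell^2}|$ by Cauchy--Schwarz directly in $\ell^2$, whereas the paper rewrites it as $\langle x-y,J^\ast u_r\rangle_{B,B^\ast}$ and uses $\|J^\ast u_r\|_{B^\ast}\le\|J\|\,\|u_r\|_{\ell^2}$; both give the same estimate.
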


\begin{proof}
Fix $x,y\in B$. For real $a,b$ one has
$|e^{ia}-e^{ib}|=2|\sin((a-b)/2)|\le |a-b|$, and hence
\begin{align*}
\|\Phi_R(x)-\Phi_R(y)\|_{\ell^2}^2
&=
\frac1R\sum_{r=1}^R
\bigl|e^{i\sqrt t\,\langle Jx,u_r\rangle_{\ell^2}}
   -e^{i\sqrt t\,\langle Jy,u_r\rangle_{\ell^2}}\bigr|^2 \\
&\le
\frac tR\sum_{r=1}^R
\bigl|\langle J(x-y),u_r\rangle_{\ell^2}\bigr|^2 \\
&=
\frac tR\sum_{r=1}^R
\bigl|\langle x-y,J^\ast u_r\rangle_{B,B^\ast}\bigr|^2 \\
&\le
\frac tR\sum_{r=1}^R
\|J^\ast u_r\|_{B^\ast}^2\,\|x-y\|_{W_1}^2 .
\end{align*}
Since $\|J^\ast u_r\|_{B^\ast}\le \|J\|\,\|u_r\|_{\ell^2}$ for each $r$, the stated bound
follows by taking square roots.
\end{proof}

\begin{corollary}\label{cor:rff-mass}
Assume \emph{(H1)} and retain the notation of Theorem~\ref{thm:finite-max-certificate-slack}. Fix $\hat\varepsilon\in(0,1)$ and let $S_0\subset K(X,A)$ be finite. Then, with probability at least
\[
1-4\bigl(|S_0|+1\bigr)^2\exp\!\left(-\frac{R\hat\varepsilon^2}{4}\right),
\]
for every $g\in S_0$ satisfying the hypotheses of Theorem~\ref{thm:finite-max-certificate-slack} and such that $\Re\,\widehat k_R(g,0)\ge 2\hat\varepsilon$, one has
\[
\mathcal M(g)
\le
\frac{1}{1-\varepsilon}
\left(
\sqrt{
\frac{2}{t}\,
\frac{\log\!\bigl(1/(\Re\,\widehat k_R(g,0)-\hat\varepsilon)\bigr)}
{\displaystyle
\max_{v\in\mathcal V(S,g,\varepsilon)} 2^{-(2+3\delta)L(v)}
}
}
+
\frac{\Delta(S,g,\varepsilon)}{2}\sum_{u\in S}|n_u|
\right).
\]
\end{corollary}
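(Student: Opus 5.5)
The plan is to combine the uniform concentration bound of Lemma~\ref{lem:rff-uniform} with the deterministic certificate of Theorem~\ref{thm:finite-max-certificate-slack}, using monotonicity of the map $r\mapsto\log(1/r)$ to pass from the true kernel value to its empirical surrogate.

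\textbf{Choice of the finite set.} Apply Lemma~\ref{lem:rff-uniform} to $S':=\jmath(S_0)\cup\{0\}\subset B$, where $\jmath\colon K(X,A)\hookrightarrow B$ is the canonical isometric embedding. Since $|S'|\le |S_0|+1$, the lemma gives an event $E$ of probability at least $1-4(|S_0|+1)^2\exp(-R\hat\varepsilon^2/4)$. On $E$ we have $|\widehat k_R(x,y)-k_{J,\Sigma,t}(x,y)|\le\hat\varepsilon$ for all $x,y\in S'$, and in particular for all pairs $(g,0)$ with $g\in S_0$. The rest of the argument is deterministic on $E$.

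\textbf{From the empirical to the true kernel value.} Fix $g\in S_0$ satisfying the hypotheses of Theorem~\ref{thm:finite-max-certificate-slack}, with $\Re\,\widehat k_R(g,0)\ge 2\hat\varepsilon$. By Theorem~\ref{thm:kJt-sigma}, $k_{J,\Sigma,t}(g,0)$ is real and lies in $(0,1]$. Taking real parts on $E$ gives
\[
k_{J,\Sigma,t}(g,0)\ge \Re\,\widehat k_R(g,0)-\hat\varepsilon\ge\hat\varepsilon>0 .
\]
Since $r\mapsto\log(1/r)$ is decreasing on $(0,\infty)$, this yields
\[
\log\bigl(1/k_{J,\Sigma,t}(g,0)\bigr)\le \log\bigl(1/(\Re\,\widehat k_R(g,0)-\hat\varepsilon)\bigr).
\]
The right-hand side is finite, because the argument of the logarithm is at least $\hat\varepsilon>0$. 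It may be negative when $\Re\,\widehat k_R(g,0)-\hat\varepsilon>1$. In that case the left-hand side, which is nonnegative, would be forced to equal $0$, which is impossible. So this regime cannot occur on $E$, and the square root is well defined. This is the one point that needs care: the threshold $2\hat\varepsilon$ is exactly what guarantees a positive argument inside the logarithm.

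\textbf{Conclusion.} Substitute this inequality into the right-hand side of \eqref{eq:finite-max-slack}. The map
\[
a\mapsto \frac{1}{1-\varepsilon}\left(\sqrt{2a/(tC)}+\frac{\Delta(S,g,\varepsilon)}{2}\sum_{u\in S}|n_u|\right)
\]
is nondecreasing in $a\ge0$, with the constant
\[
C=\max_{v\in\mathcal V(S,g,\varepsilon)}2^{-(2+3\delta)L(v)}>0 .
\]
This gives the stated bound simultaneously for all eligible $g\in S_0$ on the single event $E$.

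I expect no step to be a substantive obstacle. The main point requiring attention is the positivity and finiteness bookkeeping for the logarithm, handled in the second step.
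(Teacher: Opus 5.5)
Your proposal is correct and follows the paper's proof essentially step by step. Like the paper, you apply Lemma~\ref{lem:rff-uniform} to $S_0\cup\{0\}$, deduce $k_{J,\Sigma,t}(g,0)\ge \Re\,\widehat k_R(g,0)-\hat\varepsilon\ge\hat\varepsilon>0$ on the good event, and substitute into Theorem~\ref{thm:finite-max-certificate-slack} using monotonicity of $r\mapsto\log(1/r)$. Your extra sign bookkeeping is harmless but unnecessary: $\Re\,\widehat k_R\le 1$ always, so the argument of the logarithm is below $1$ and the logarithm is automatically positive.
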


\begin{proof}
Apply Lemma~\ref{lem:rff-uniform} with $S:=S_0\cup\{0\}$. On the resulting event, for every $g\in S_0$ one has $\bigl|\widehat k_R(g,0)-k_{J,\Sigma,t}(g,0)\bigr|\le \hat\varepsilon,$ and hence $\Re\,\widehat k_R(g,0)\ge k_{J,\Sigma,t}(g,0)-\hat\varepsilon.$ Equivalently, $k_{J,\Sigma,t}(g,0)\ge \Re\,\widehat k_R(g,0)-\hat\varepsilon.$ If $\Re\,\widehat k_R(g,0)\ge 2\hat\varepsilon$, then $\Re\,\widehat k_R(g,0)-\hat\varepsilon>0$ and therefore $\log\!\bigl(1/k_{J,\Sigma,t}(g,0)\bigr) \le \log\!\bigl(1/(\Re\,\widehat k_R(g,0)-\hat\varepsilon)\bigr).$ Substituting this bound into Theorem~\ref{thm:finite-max-certificate-slack} yields the claimed inequality.
\end{proof}

\begin{corollary}\label{cor:rff-entropy}
Let $S\subseteq K(X,A)$ be nonempty and let $\varepsilon>0$. If $\mathrm{Lip}(\Phi_R)>0$, then
\[
N_{\ell^2}\bigl(\Phi_R(S),\varepsilon\bigr)
\le
N_{\rho}\!\left(S,\frac{\varepsilon}{\mathrm{Lip}(\Phi_R)}\right).
\]
If $\mathrm{Lip}(\Phi_R)=0$, then $\Phi_R$ is constant and $N_{\ell^2}(\Phi_R(S),\varepsilon)=1$.
\end{corollary}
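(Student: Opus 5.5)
The plan is to transfer Lipschitz control from $\Phi_R$ to covering numbers by pushing a $\rho$-cover of $S$ forward, in the same way as the proof of Theorem~\ref{thm:entropy-feature}.

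\emph{Lipschitz bound on $K(X,A)$.} By Corollary~\ref{cor:rff-lipschitz}, $\Phi_R$ is Lipschitz from $(B,\|\cdot\|_{W_1})$ to $(\mathbb C^R,\|\cdot\|_{\ell^2})$ with $L:=\mathrm{Lip}(\Phi_R)<\infty$; here $\mathrm{Lip}(\Phi_R)$ is understood as the optimal constant, which is finite by that corollary. By Theorem~\ref{thm:BE-chain}, the embedding $\jmath\colon(K(X,A),\rho)\hookrightarrow B$ is isometric. Hence, for all $\alpha,\beta\in K(X,A)$,
\[
\|\Phi_R(\jmath\alpha)-\Phi_R(\jmath\beta)\|_{\ell^2}\le L\,\rho(\alpha,\beta).
\]

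\emph{Case $L>0$.} Set $\eta:=\varepsilon/L$. If $N_\rho(S,\eta)=\infty$ there is nothing to prove. Otherwise, take centers $c_1,\dots,c_m$ of open $\rho$-balls of radius $\eta$ covering $S$, with $m=N_\rho(S,\eta)$. For $s\in S$ with $\rho(s,c_j)<\eta$, the Lipschitz bound gives $\|\Phi_R(s)-\Phi_R(c_j)\|<L\eta=\varepsilon$. The strict inequality is preserved, so the open balls of radius $\varepsilon$ around $\Phi_R(c_j)$ cover $\Phi_R(S)$. Taking minimal cardinalities gives the first inequality. The centers need not lie in $S$, matching the convention of Definition~\ref{def:covering-number}; if internal covers were intended one would use $\Phi_R(S)$-centered balls, which is the same argument.

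\emph{Case $L=0$.} Then $\|\Phi_R(x)-\Phi_R(y)\|\le 0$ for all $x,y$, so $\Phi_R$ is constant on $B$. Hence $\Phi_R(S)$ is a single point, since $S\neq\varnothing$, and one ball of any radius $\varepsilon>0$ covers it, giving $N_{\ell^2}(\Phi_R(S),\varepsilon)=1$.

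The only mildly delicate point is identifying $\mathrm{Lip}(\Phi_R)$ as the optimal Lipschitz constant, rather than just the upper bound of Corollary~\ref{cor:rff-lipschitz}, so that the case $L=0$ genuinely forces constancy. With the bound expression alone, $L=0$ would still force constancy, since the expression is itself an admissible Lipschitz constant. So either reading works, and there is no real obstacle.
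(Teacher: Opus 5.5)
Your proposal is correct and follows the paper's proof. The case $\mathrm{Lip}(\Phi_R)=0$ gives a singleton image. For $\mathrm{Lip}(\Phi_R)>0$, you push a $\rho$-cover of radius $\eta=\varepsilon/\mathrm{Lip}(\Phi_R)$ forward using $\Phi_R(B_\rho(x,\eta))\subseteq B_{\ell^2}(\Phi_R(x),\varepsilon)$, which is exactly the paper's argument. Your explicit appeal to the isometric embedding $K(X,A)\hookrightarrow B$ and your handling of $N_\rho=\infty$ spell out details the paper leaves implicit.
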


\begin{proof}
If $\mathrm{Lip}(\Phi_R)=0$, then $\Phi_R$ is constant, so $\Phi_R(S)$ is a singleton and $N_{\ell^2}(\Phi_R(S),\varepsilon)=1$. Assume $\mathrm{Lip}(\Phi_R)>0$ and set $\eta:=\varepsilon/\mathrm{Lip}(\Phi_R)$. Since $\Phi_R$ is $\mathrm{Lip}(\Phi_R)$--Lipschitz from $(K(X,A),\rho)$ into $(\mathbb C^R,\|\cdot\|_{\ell^2})$, for every $x\in K(X,A)$ one has $\Phi_R\bigl(B_\rho(x,\eta)\bigr)\subseteq B_{\ell^2}\bigl(\Phi_R(x),\varepsilon\bigr).$ Therefore any cover of $S$ by $\rho$--balls of radius $\eta$ pushes forward to a cover of $\Phi_R(S)$ by $\ell^2$--balls of radius $\varepsilon$. Taking minimal cardinalities yields the inequality.
\end{proof}

\section{Examples}
\label{sec:example}

\begin{figure}[t]
 \centering
 \begin{tabular}{cc}
  \includegraphics[width=0.45\textwidth]{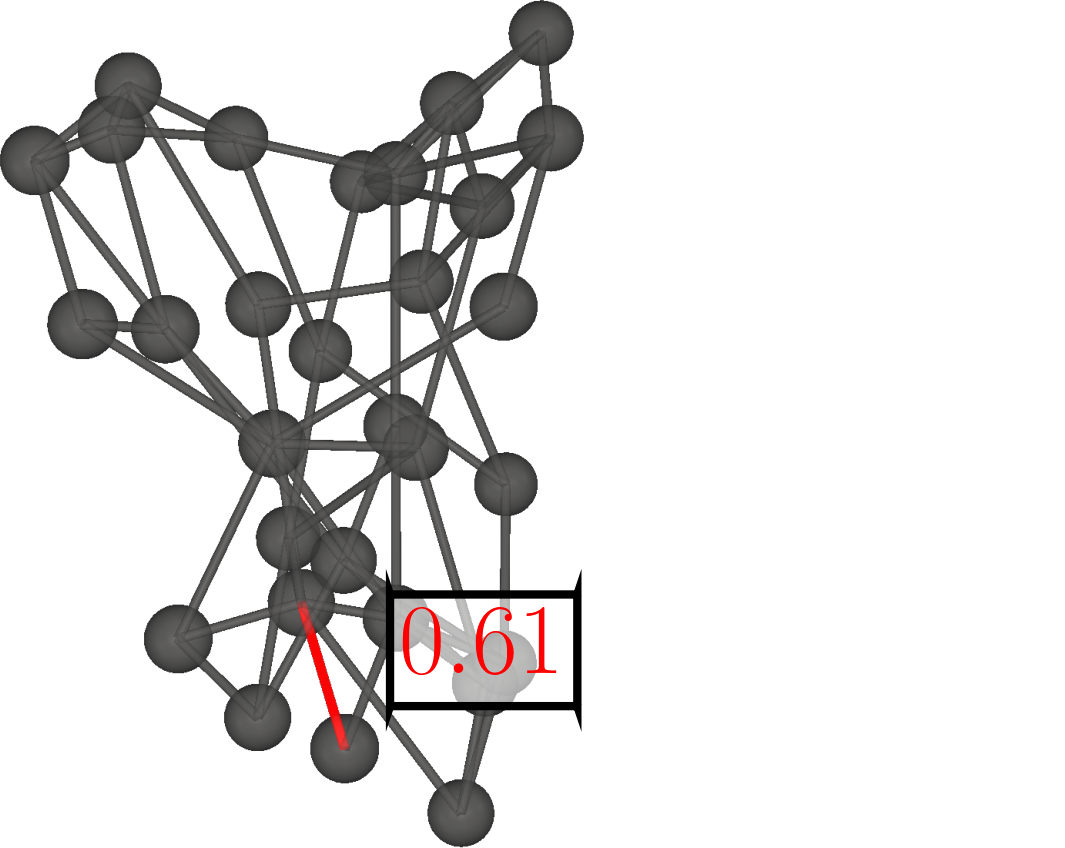} &
  \includegraphics[width=0.45\textwidth]{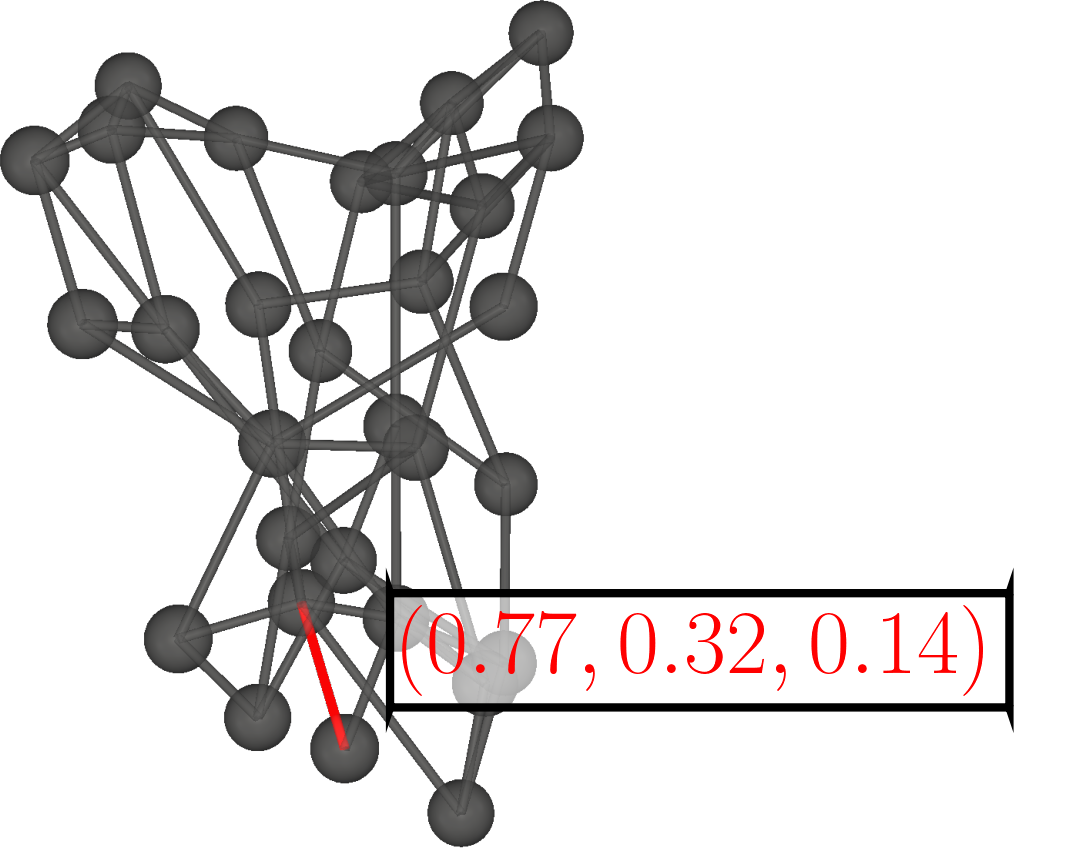} \\
  (a) $\mathbb R$ &
  (b) $\mathbb R^3$ \\
  \includegraphics[width=0.45\textwidth]{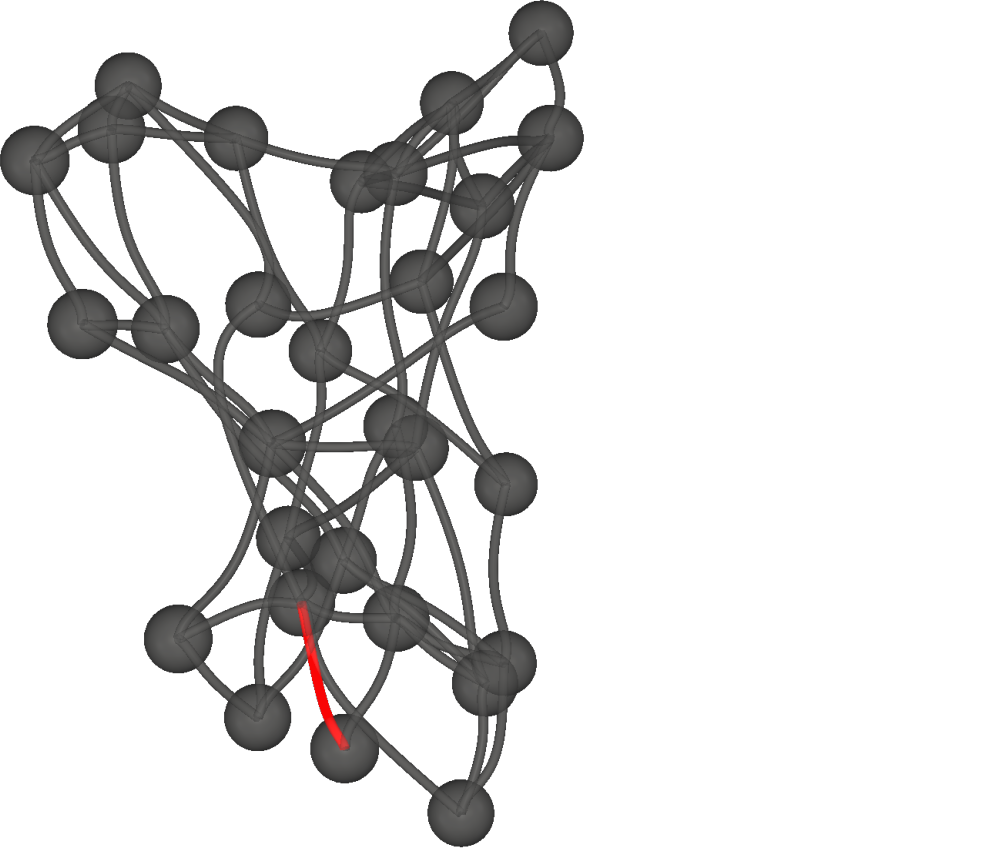} &
  \includegraphics[width=0.45\textwidth]{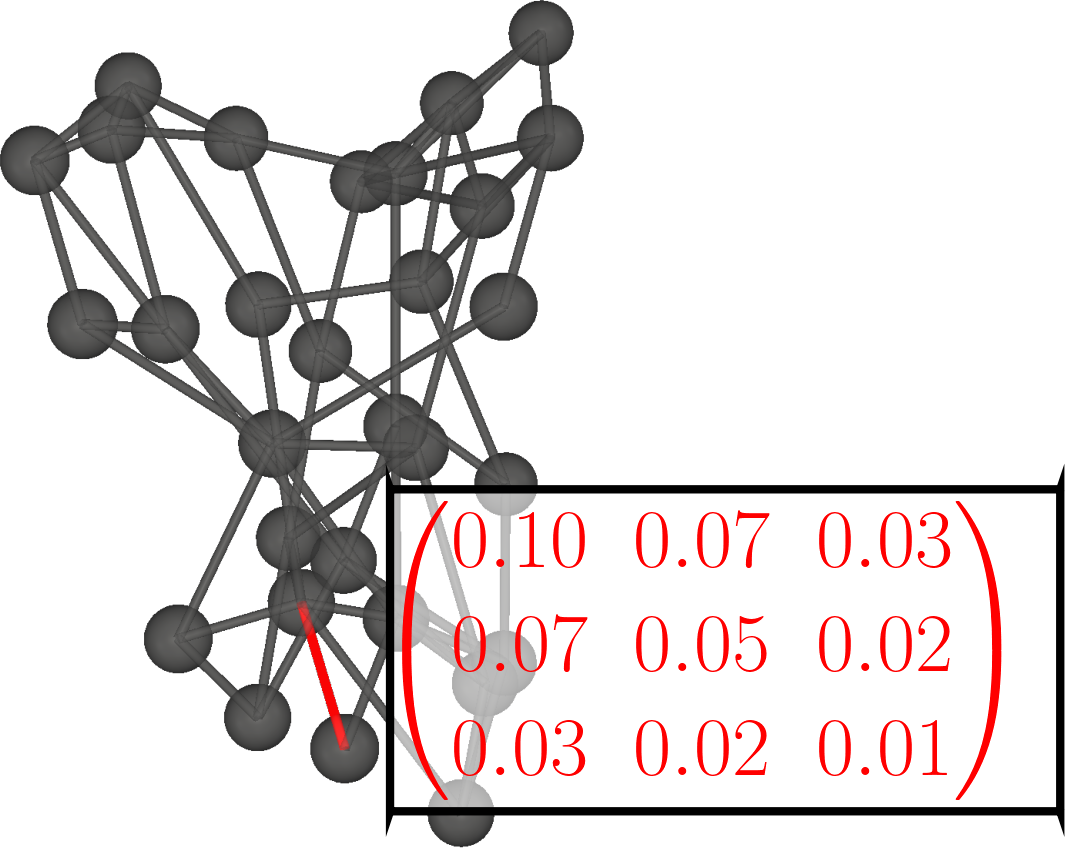} \\
  (c) $C([0,1])$ &
  (d) $\mathcal S^+_3$
 \end{tabular}
 \caption{A fixed Watts--Strogatz graph equipped with four deterministic, basis--invariant spectral edge labelings into non--uniformly discrete spaces. Each panel shows the same graph, with one representative edge highlighted; panels (a)--(d) correspond to different choices of label space.}
 \label{fig:rff-network-labels}
\end{figure}

Figure~\ref{fig:rff-network-labels} shows four deterministic spectral edge labelings on a common underlying graph. The graph is a Watts--Strogatz small--world graph with parameters $n=30$, $k=4$, $p=0.3$. In panel~(a), vertices carry the diagonal of the Laplacian pseudoinverse and each edge $\{u,v\}$ is labeled by the maximum of the endpoint values; $\mathbb R$ is equipped with its usual order and metric. In panel~(b), vertices are embedded in $\mathbb R^3$ via heat kernel diagonals at three fixed spectral time scales and edges are labeled by the coordinatewise maximum of the endpoint embeddings; the product order and Euclidean metric are used on $\mathbb R^3$. In panel~(c), vertex heat profiles relative to a fixed base vertex are differenced along edges and normalized to increasing homeomorphisms of $[0,1]$; $C([0,1])$ is equipped with the pointwise order and the supremum norm. In panel~(d), the embedding from panel~(b) is reused to assign each edge the rank--one positive semidefinite matrix given by the outer product of the difference of the endpoint vectors; the L\"owner order and Frobenius norm are used on $\mathcal S^+_3$. In each case, we use the lower--star clique filtration induced by the specified partial order, and we measure virtual persistence diagrams using the corresponding product metric on the quotient birth--death space.

More generally, let $G=(V,E)$ be a finite simple graph and let $(P,d_P,\preceq)$ be a partially ordered metric space that is not uniformly discrete. Any deterministic edge labeling $\ell\colon E\to P$ determines a $P$-indexed lower--star filtration of clique complexes by $K_p := \bigl\{\sigma\subseteq V : \ell(e)\preceq p \text{ for all edges } e\subset\sigma\bigr\},$ for $p\in P.$ Persistent homology of this filtration yields birth--death pairs in $P^2$, which define a virtual persistence diagram valued in the metric pair $(P^2,d_P\oplus d_P,A)$, where $A=\{(p,p):p\in P\}$ is the diagonal.

\begin{figure}[t]
 \centering
 \begin{tabular}{cc}
  \includegraphics[width=0.5\textwidth]{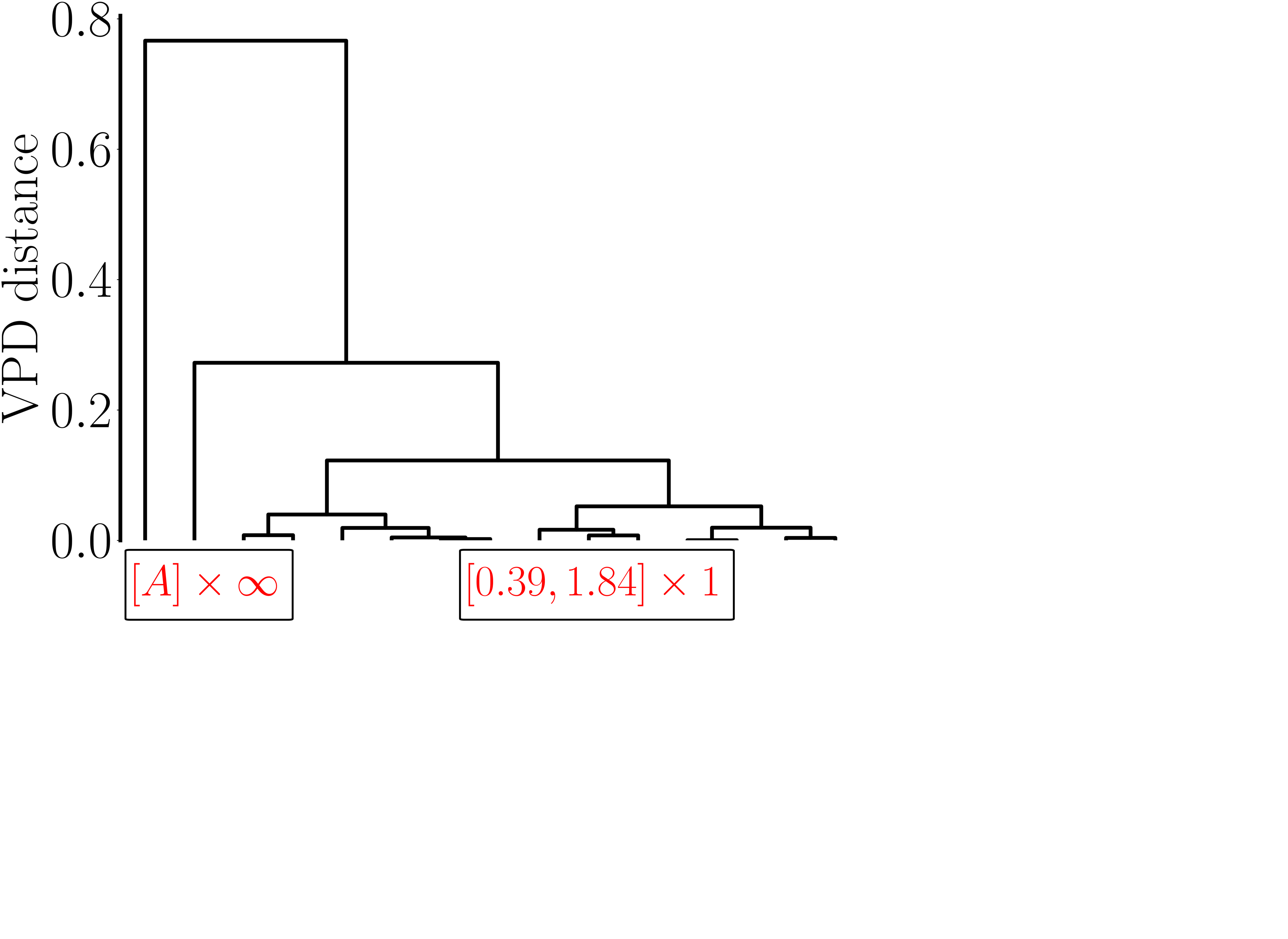} &
  \includegraphics[width=0.5\textwidth]{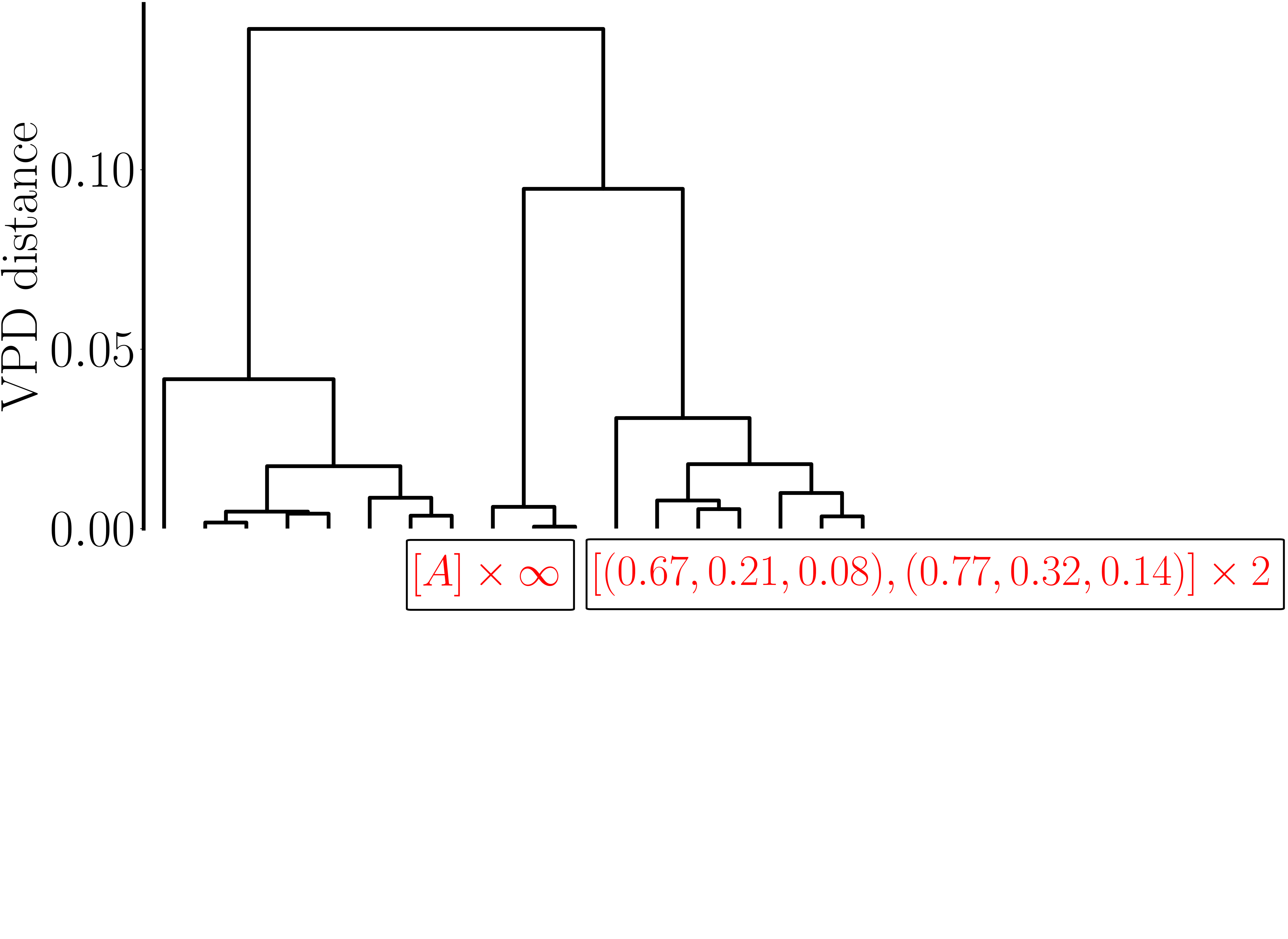} \\
  (a) $\mathbb R^2/A$ &
  (b) $(\mathbb R^3)^2/A$ \\
  \includegraphics[width=0.5\textwidth]{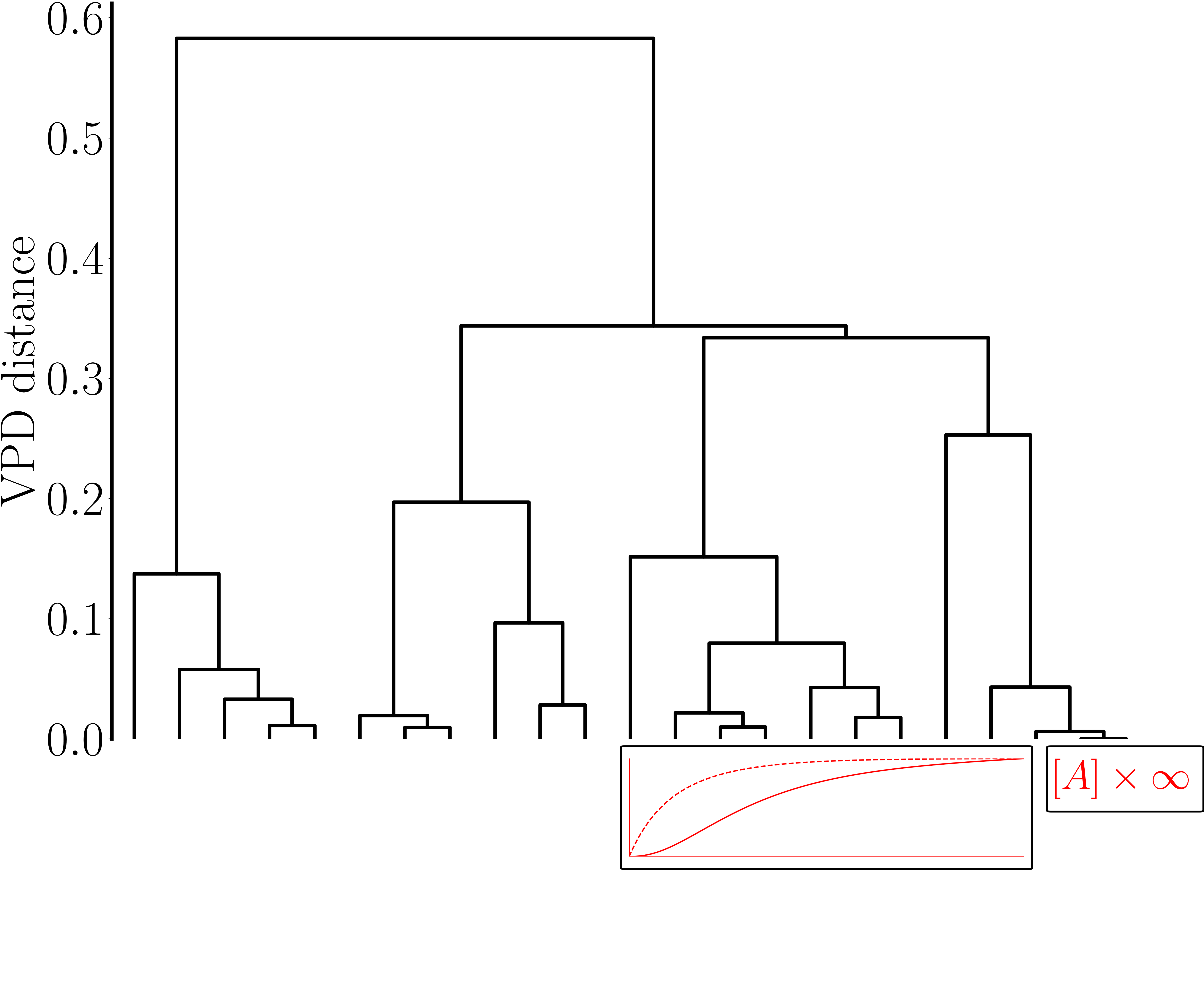} &
  \includegraphics[width=0.5\textwidth]{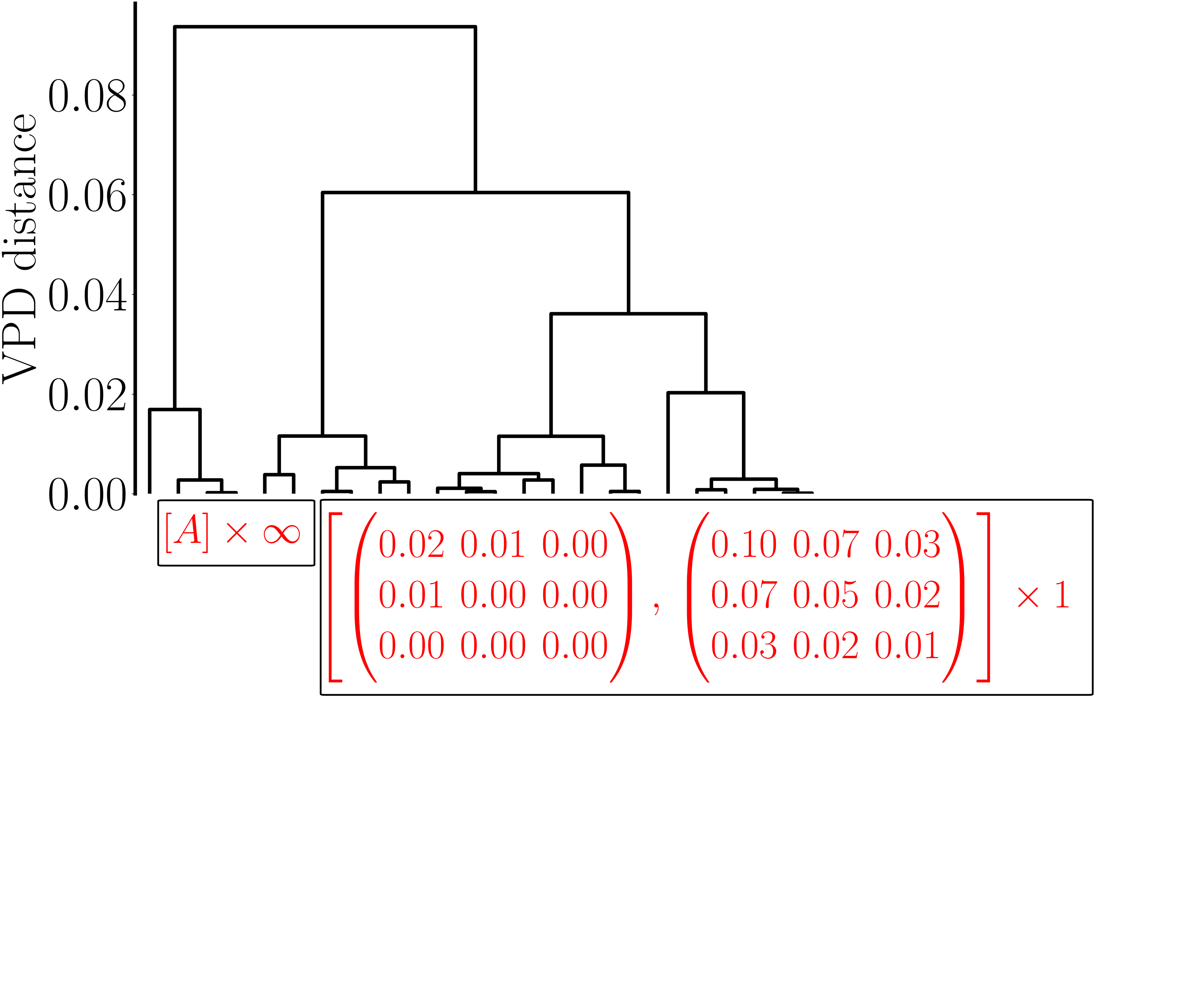} \\
  (c) $C([0,1])^2/A$ &
  (d) $(\mathcal S^+_3)^2/A$
 \end{tabular}
 \caption{Ultrametric dendrograms for the $H_1$ virtual persistence diagrams associated with the edge--labeled graphs in Figure~\ref{fig:rff-network-labels}.}
 \label{fig:rff-network-vpd}
\end{figure}

Figure~\ref{fig:rff-network-vpd} displays the single--linkage dendrogram of the multiset of birth--death classes obtained from the $H_1$ persistent homology computation in this example, viewed as a subset of $P^2/A\cup\{[A]\times\infty\}$ and clustered with respect to the $1$--strengthened metric. The dendrogram carries the subdominant ultrametric dominated by $d_1$, with branch heights equal to merge distances. Leaves are in bijection with the computed persistence classes, including the diagonal class $[A]\times\infty$, and are labeled by their birth--death data and multiplicities. This ultrametric induces a canonical filtration on the diagram that remains well defined for non--Euclidean and infinite--dimensional label spaces.

For the four edge labelings in Figures~\ref{fig:rff-network-labels} and~\ref{fig:rff-network-vpd}, we evaluate the theoretical bounds of Section~\ref{subsec:main-rff} numerically using the fixed choices $\dim H=128$, $t=1.0$, $R=100$, and $\delta=0.05$. The Lipschitz bounds for the restriction of RKHS functions to $K(X,A)$ take the values $51.32$, $44.25$, $47.72$, and $44.86$. Evaluating the random Fourier feature concentration bound at $(R,\delta)$ gives approximation bounds equal to $0.4275$, $0.4279$, $0.4279$, and $0.4283$. At a perturbation level equal to $0.1$ times the average persistence lifetime, the robustness bound evaluates to $7.47$, $0.83$, $2.23$, and $0.53$. The underlying graph, the lower--star clique filtration construction, and the kernel construction are the same in all four cases. These values isolate the contribution of the label space and its induced metric. The Lipschitz and feature approximation bounds vary little across the four labelings. The robustness bounds differ by more than an order of magnitude.

A metric space $(Y,\rho)$ arises, up to isometry, as the ambient birth--death space of a virtual persistence diagram produced by an interval-based construction if and only if $(Y,\rho)$ is isometric to $(X\times X,d\oplus d)$ for some metric space $(X,d)$, where
\[
(d\oplus d)\bigl((x_1,x_2),(y_1,y_2)\bigr):=d(x_1,y_1)+d(x_2,y_2)
\]
denotes the $\ell_1$ product metric--if and only if there exist pseudometrics $\rho_1,\rho_2$ on $Y$ such that $\rho=\rho_1+\rho_2$, each $\rho_i$ descends to a genuine metric $d_i$ on the quotient space $X_i:=Y/\!\sim_i$ (where $y\sim_i y'$ if and only if $\rho_i(y,y')=0$), and the induced equivalence relations satisfy the grid condition that every $\sim_1$--equivalence class meets every $\sim_2$--equivalence class in exactly one point. In this case, the canonical map $\Psi\colon Y\longrightarrow X_1\times X_2,$
\[
\Psi(y):=([y]_1,[y]_2),
\]
is an isometry from $(Y,\rho)$ onto the $\ell_1$ product $(X_1\times X_2,d_1\oplus d_2)$, and $(Y,\rho)$ is an $\ell_1$--square--i.e., isometric to $(X\times X,d\oplus d)$ for some metric space $(X,d)$--if and only if the factor metric spaces $(X_1,d_1)$ and $(X_2,d_2)$ are isometric. Consequently, the interval-based VPD algorithm realizes exactly those ambient metric geometries admitting such an additive two-pseudometric decomposition with transversal kernels, and no others.

The constructions shown in Figure~\ref{fig:rff-network-vpd} take values in the commutative monoid $D(X,A)$ of virtual persistence diagrams with nonnegative multiplicities. The passage to signed virtual persistence diagrams is canonical: given $\alpha,\beta\in D(X,A)$, their formal difference $\alpha-\beta$ defines an element of the Grothendieck group $K(X,A)$.

\section{Conclusion}
\label{sec:conclusion}

The central structural result of this paper is a complete classification of local compactness for the Grothendieck metric group \((K(X,A),\rho)\). Theorem~\ref{thm:classification} shows that \((K(X,A),\rho)\) is locally compact if and only if its metric topology is discrete, and Corollary~\ref{cor:input-output-discrete} identifies this condition with uniform discreteness of the pointed metric space \((X/A,d_1,[A])\). In the uniformly discrete case, \(K(X,A)\cong\bigoplus_{x\in X\setminus A}\mathbb Z\) is a discrete locally compact abelian group, and the Fourier and RKHS constructions based on Pontryagin duality and Haar measure apply as recalled in Section~\ref{subsec:background-lc-rkhs}. Outside this case, which is the generic situation under the standing hypotheses~(H1)--(H2) in Definition~\ref{def:standing-hypotheses}, \((K(X,A),\rho)\) is non-locally compact and Bochner's theorem is unavailable. To treat this non-discrete case, we pass to the Banach completion \(B=\widehat V(X,A)\), which is canonically isometric to the Lipschitz-free space \(\mathcal F(X/A,d_1)\) and infinite dimensional by Lemmas~\ref{lem:infinite-rank} and~\ref{lem:B-separable}. From a norming family \((\ell_n)\subset B^\ast\), weights \((w_n)\in\ell^2\), and a diagonal trace-class covariance operator \(\Sigma\), Section~\ref{subsec:main-rkhs} constructs a bounded linear map \(J\colon B\to\ell^2\) and an associated centered Gaussian measure with covariance operator \(Q_{J,\Sigma}\). Theorem~\ref{thm:kJt-sigma} shows that this data determines, for each \(t>0\), a translation-invariant positive definite kernel $k_{J,\Sigma,t}(x,y) = \exp\!\left(-\frac t2\,\|\Sigma^{1/2}J(x-y)\|_{\ell^2}^2\right).$ Section~\ref{subsec:main-comparison} shows that every continuous Hilbertian seminorm on \(B\) is realized by such covariance data, and Theorem~\ref{thm:seminorm-bilipschitz-iff} gives an exact criterion, in terms of Rayleigh quotients~\cite{10.5555/280490} of the associated covariance operators, for when two choices induce uniformly equivalent Gaussian kernels. When these kernels are restricted to \(K(X,A)\), Theorem~\ref{thm:lipschitz-main-sigma} provides an explicit global bound on the \(\rho\)-Lipschitz constants of all functions in the corresponding reproducing kernel Hilbert spaces, while Theorem~\ref{thm:entropy-feature} bounds the covering numbers of subsets of \(K(X,A)\) in the induced feature metric by those of the Grothendieck metric. In the opposite direction, Theorem~\ref{thm:finite-max-certificate-slack} shows that a single kernel value \(k_{J,\Sigma,t}(g,0)\), together with finite support information, yields an explicit upper bound on the diagrammatic mass \(\mathcal M(g)\). Section~\ref{subsec:main-rff} then shows that all of these bounds persist, up to explicit probabilistic error terms, when kernel evaluations are replaced by random Fourier feature approximations.

The constructions in Section~\ref{sec:main} define filtrations indexed by partially ordered metric label spaces and produce virtual persistence diagrams in the associated metric birth--death spaces. In the example of Section~5, a Watts--Strogatz graph is equipped with edge labelings into \(\mathbb R\), \(\mathbb R^3\), \(C([0,1])\), and \(\mathcal S^+_3\), each carrying a natural partial order and metric (Figure~\ref{fig:rff-network-labels}); the induced lower--star clique filtrations yield birth--death pairs in \(P^2\) and hence virtual persistence diagrams in \((P^2,d_P\oplus d_P,A)\) with diagonal \(A\) (Figure~\ref{fig:rff-network-vpd}). Although these label spaces are non-uniformly discrete and, in two cases, infinite dimensional, collapsing the diagonal places all resulting diagrams in a single Grothendieck metric group \((K(X,A),\rho)\) and its Banach completion \(B=\widehat V(X,A)\) by Theorems~\ref{thm:rho} and~\ref{thm:BE-chain}. The diagrams obtained in this way have nonnegative multiplicities and therefore lie in the submonoid \(D(X,A)\). Finite linear combinations of these diagrams are formed by adding and subtracting multiplicities coordinatewise, with distances evaluated using the translation invariance of \(\rho\). Virtual persistence diagrams are therefore treated as elements of a metric abelian group. The kernels, Gaussian measures, Banach-space linearization, and random feature maps developed in Sections~\ref{sec:main} and~\ref{subsec:main-rkhs} act on signed combinations of diagrams and extend persistence-based constructions to partially ordered metric label spaces under the Wasserstein--\(1\) geometry.

The constructions described above have several limitations when compared with the discrete, locally compact case in Section~\ref{subsec:background-lc-rkhs}. In the non-uniformly discrete case covered by~(H1)--(H2), the Gaussian kernels \(k_{J,\Sigma,t}\) depend on auxiliary choices of a norming sequence \((\ell_n)\), weights \((w_n)\), the embedding \(J\colon B\to\ell^2\), and the covariance operator \(\Sigma\); Theorem~\ref{thm:seminorm-bilipschitz-iff} compares different choices at the level of covariance operators but does not single out a canonical one. This contrasts with the finite case, where the identification \(K(X,A)\cong\mathbb Z^{|X\setminus A|}\) and the Pontryagin dual construction in Section~\ref{subsec:background-lc-rkhs} select a distinguished dual group and Haar measure without additional data. The Lipschitz bounds in Theorem~\ref{thm:lipschitz-main-sigma}, the mass bounds in Theorem~\ref{thm:finite-max-certificate-slack}, and the concentration inequalities in Lemma~\ref{lem:rff-uniform} and Corollary~\ref{cor:rff-mass} are expressed in terms of global quantities such as \(\sum_n\sigma_n w_n^2\) and worst-case combinatorial encodings, and the numerical values in Section~5 show that these estimates are often conservative. On the algorithmic side, the example based on lower-star clique filtrations along edge labelings produces virtual persistence diagrams in product spaces of the form \((P^2,d_P\oplus d_P,A)\), and the characterization at the end of Section~5 shows that this covers exactly those birth--death spaces that are isometric to \(\ell_1\)-squares. Consequently, that construction does not produce virtual persistence diagrams for all metric pairs \((X,d,A)\); metric spaces that do not admit such an \(\ell_1\)-decomposition are not reached by the example. In addition, the algorithm fixes the diagonal subset \(A\) as the trivial diagonal in a product space and collapses it to a single basepoint, whereas the general definition of virtual persistence diagrams in Section~\ref{subsec:background-vpd} allows arbitrary choices of \(A\subset X\).

These limitations suggest several directions for further work. On the analytic side, one might consider bounds such as those in Theorems~\ref{thm:lipschitz-main-sigma}, \ref{thm:finite-max-certificate-slack}, and \ref{thm:entropy-feature} that depend on finer invariants of the covariance operator \(Q_{J,\Sigma}\) and of the metric structure of \((X/A,d_1)\). The comparison principle in Theorem~\ref{thm:seminorm-bilipschitz-iff} suggests optimizing over admissible covariance operators within a fixed class in order to control Lipschitz constants and mass bounds subject to geometric constraints imposed by a given application. On the algorithmic side, it would be natural to design constructions that associate virtual persistence diagrams to more general metric pairs \((X,d,A)\), not only to product spaces with the \(\ell_1\)-metric, and to understand to what extent arbitrary metric birth--death spaces can be approximated by such constructions. A related problem is to treat nontrivial diagonal choices: given \((X,d)\), one would like methods that select subsets \(A\subset X\) that play the role of a diagonal and interact well with the Grothendieck metric \(\rho\) and with the kernels \(k_{J,\Sigma,t}\).

\section*{Statements and Declarations}

\begin{itemize}

\item \textbf{Competing Interests} The authors declare that they have no competing interests.

\item \textbf{Funding} This research received no external funding.

\item \textbf{Data Availability} Not applicable.

\item \textbf{Code Availability} The implementation used in this work is available at \url{https://github.com/cfanning8/Reproducing_Kernel_Hilbert_Spaces_for_Non_Discrete_Virtual_Persistence_Diagrams}.

\item \textbf{Authors' Contributions}
C.F. developed the theoretical framework, conducted and analyzed the examples, and wrote the manuscript. M.E.A. advised the project and provided feedback on the framework and manuscript.

\end{itemize}



\end{document}